\newtheorem{theorem}{Theorem} [section]
\newtheorem{proposition}[theorem]{Proposition}
\newtheorem{lemma}[theorem]{Lemma}
\newtheorem{open}{Open Question}
\newcounter{defno}
\newcommand{\degrees}{^\circ}
\newcommand{\msg}[1]{\textcolor{red}{#1}}
\newcommand{\no}[1]{\textcolor{red}{#1}}
\long\def\void#1{}
\begin{document}
International Journal of  Computer Discovered Mathematics (IJCDM) \\
ISSN 2367-7775 \copyright IJCDM \\
Volume 10, 2025, pp. 349--382  \\
web: \url{http://www.journal-1.eu/} \\
Received 13 July 2025. Published on-line 21 Aug. 2025 \\ 

\copyright The Author(s) This article is published 
with open access.\footnote{This article is distributed under the terms of the Creative Commons Attribution License which permits any use, distribution, and reproduction in any medium, provided the original author(s) and the source are credited.} \\
\bigskip
\bigskip

\begin{center}
	{\Large \textbf{More Shapes of Central Quadrilaterals}} \\
	\medskip
	\bigskip
        \bigskip

	\textsc{Stanley Rabinowitz$^a$ and Ercole Suppa$^b$} \\

	$^a$ 545 Elm St Unit 1,  Milford, New Hampshire 03055, USA \\
	e-mail: \href{mailto:stan.rabinowitz@comcast.net}{stan.rabinowitz@comcast.net}\footnote{Corresponding author} \\
	web: \url{http://www.StanleyRabinowitz.com/} \\
	
	$^b$ Via B. Croce 54, 64100 Teramo, Italia \\
	e-mail: \href{mailto:ercolesuppa@gmail.com}{ercolesuppa@gmail.com} \\
	web: \url{https://www.esuppa.it} \\

\bigskip

\end{center}
\bigskip
\bigskip

\textbf{Abstract.}
Let $E$ be a point in the plane of a convex quadrilateral $ABCD$.
The lines from $E$ to the vertices of the quadrilateral form four triangles.
If we locate a triangle center in each of these triangles, the four triangle
centers form another quadrilateral called a central quadrilateral.
For each of various shaped quadrilaterals, and each of 1000 different triangle
centers, and for various choices for $E$, we examine the shape of the central quadrilateral.
Using a computer, we determine when the central quadrilateral
has a special shape, such as being a rhombus or a cyclic quadrilateral.
A typical result is the following.
Let $E$ be the centroid of equidiagonal quadrilateral $ABCD$.
Let $F$, $G$, $H$, and $I$ be the $X_{591}$-points of triangles $\triangle ABE$, $\triangle BCE$, $\triangle CDE$,
and $\triangle DAE$, respectively. Then $FGHI$ is an orthodiagonal quadrilateral.

\medskip
\textbf{Keywords.} triangle centers, quadrilaterals, computer-discovered mathematics,
Euclidean geometry, GeometricExplorer.

\medskip
\textbf{Mathematics Subject Classification (2020).} 51M04, 51-08.

\newcommand{\ru}{\rule[-7pt]{0pt}{20pt}}


\bigskip
\bigskip
%
\section{Introduction}
\label{section:introduction}

In this study, $ABCD$ always represents a convex quadrilateral known as the \emph{reference quadrilateral}.
A point $E$ in the plane of the quadrilateral (not on the boundary) is chosen and will be called the \emph{radiator}.
The radiator can be an arbitrary point or it can be a notable point associated with the quadrilateral.
Lines are drawn from the radiator to the vertices of the reference quadrilateral forming four triangles
with the sides of the quadrilateral as shown in Figure \ref{fig:centerPointTriangles}.
These triangles will be called the \emph{radial triangles}.

\begin{figure}[h!t]
\centering
\includegraphics[width=0.4\linewidth]{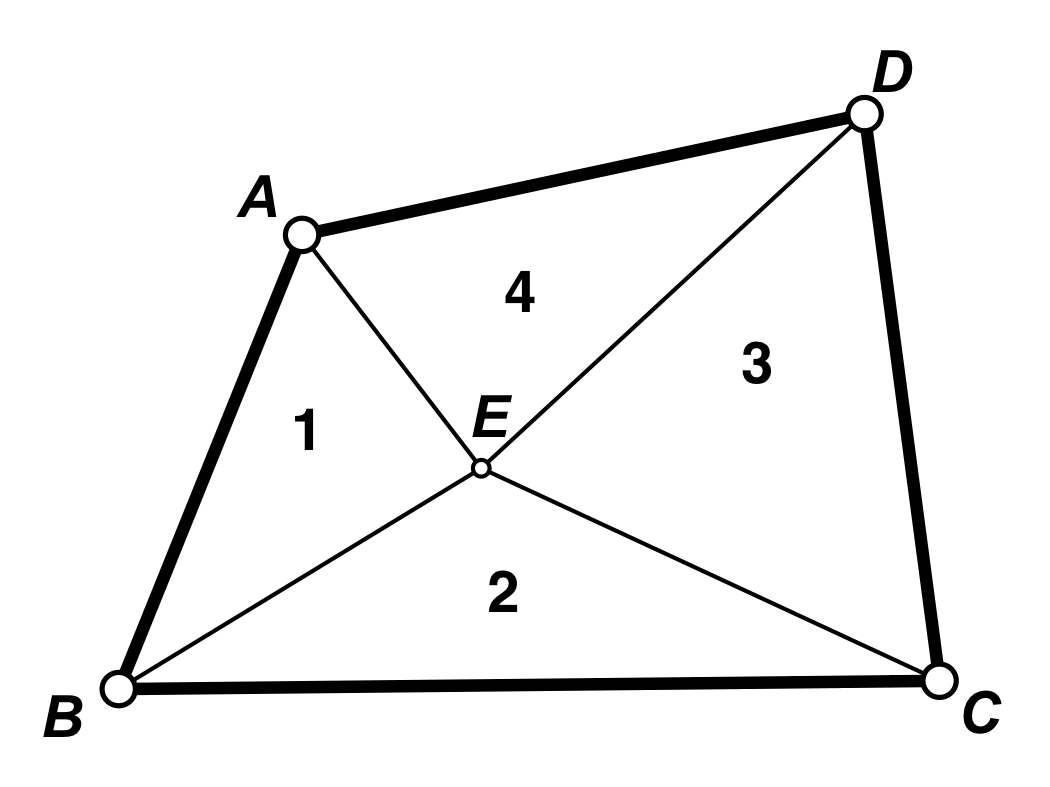}
\caption{Radial Triangles}
\label{fig:centerPointTriangles}
\end{figure}

In the figure, the radial triangles have been numbered in a counterclockwise order starting with side $AB$:
$\triangle ABE$, $\triangle BCE$, $\triangle CDE$, $\triangle DAE$.
Triangle centers (such as the incenter, centroid, or circumcenter) are selected in each triangle.
The same type of triangle center is used with each radial triangle.
In order, the names of these points are $F$, $G$, $H$, and $I$ as shown in Figure \ref{fig:centralQuadrilateral}.
These four centers form a quadrilateral $FGHI$ that will be called the \emph{central quadrilateral} (of quadrilateral $ABCD$
with respect to $E$). Quadrilateral $FGHI$ need not be convex.

\begin{figure}[h!t]
\centering
\includegraphics[width=0.4\linewidth]{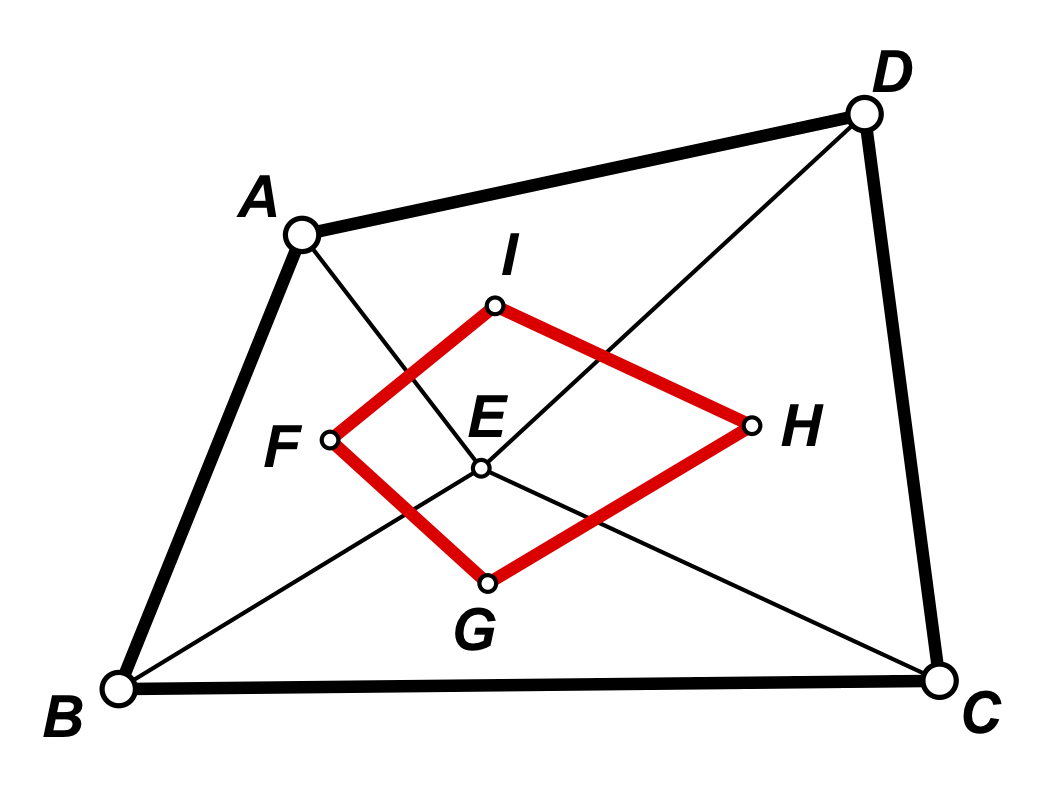}
\caption{Central Quadrilateral}
\label{fig:centralQuadrilateral}
\end{figure}

The purpose of this paper is to determine when a central quadrilateral has a special shape,
such as being a rhombus or a cyclic quadrilateral.

\newpage

\section{Types of Quadrilaterals Studied}
\label{section:quadrilaterals}

We are only interested in reference quadrilaterals that have a certain amount of symmetry.
For example, we excluded bilateral quadrilaterals (those with two equal sides),
bisect-diagonal quadrilaterals (where one diagonal bisects another), right kites,
right trapezoids, and golden rectangles.
The types of quadrilaterals we studied are shown in Table \ref{table:quadrilaterals}.
The sides of the quadrilateral, in order, have lengths $a$, $b$, $c$, and $d$.
The diagonals have lengths $p$ and $q$.
The measures of the angles of the quadrilateral, in order, are $A$, $B$, $C$, and $D$.

\begin{table}[ht!]
\caption{}
\label{table:quadrilaterals}
\begin{center}
\footnotesize
\begin{tabular}{|l|l|l|}\hline
\multicolumn{3}{|c|}{\textbf{\color{blue}\Large \strut Types of Quadrilaterals Considered}}\\ \hline
\textbf{Quadrilateral Type}&\textbf{Geometric Definition}&\textbf{Algebraic Condition}\\ \hline
general&convex&none\\ \hline
cyclic&has a circumcircle&$A+C=B+D$\\ \hline
tangential&has an incircle&$a+c=b+d$\\ \hline
extangential&has an excircle&$a+b=c+d$\\ \hline
parallelogram&opposite sides parallel&$a=c$, $b=d$\\ \hline
equalProdOpp&product of opposite sides equal&$ac=bd$\\ \hline
equalProdAdj&product of adjacent sides equal&$ab=cd$\\ \hline
orthodiagonal&diagonals are perpendicular&$a^2+c^2=b^2+d^2$\\ \hline
equidiagonal&diagonals have the same length&$p=q$\\ \hline
Pythagorean&equal sum of squares, adjacent sides&$a^2+b^2=c^2+d^2$\\ \hline
kite&two pair adjacent equal sides&$a=b$, $c=d$\\ \hline
trapezoid&one pair of opposite sides parallel&$A+B=C+D$\\ \hline
rhombus&equilateral&$a=b=c=d$\\ \hline
rectangle&equiangular&$A=B=C=D$\\ \hline
Hjelmslev&two opposite right angles&$A=C=90^\circ$\\ \hline
isosceles trapezoid&trapezoid with two equal sides&$A=B$, $C=D$\\ \hline
APquad&sides in arithmetic progression&$d-c=c-b=b-a$\\ \hline
\end{tabular}
\end{center}
\end{table}

The following combinations of entries in the above list were also considered:
bicentric quadrilaterals (cyclic and tangential), exbicentric quadrilaterals (cyclic and extangential),
bicentric trapezoids, cyclic orthodiagonal quadrilaterals, equidiagonal kites,
equidiagonal orthodiagonal quadrilaterals, equidiagonal orthodiagonal trapezoids,
harmonic quadrilaterals (cyclic and equalProdOpp), orthodiagonal trapezoids, tangential trapezoids,
and squares (equiangular rhombi).

So, in addition to the general convex quadrilateral, a total of 28 types of quadrilaterals
were considered in this study.

A graph of the types of quadrilaterals considered is shown in Figure \ref{fig:quadShapes}.
An arrow from A to B means that any quadrilateral of type B is also of type A.
For example: all squares are rectangles and all kites are orthodiagonal.
If a directed path leads from a quadrilateral of type A to a quadrilateral of type B, then we will
say that A is an \emph{ancestor} of B. For example, an equidiagonal quadrilateral is an ancestor of a rectangle.
In other words, all rectangles are equidiagonal.

\begin{figure}[h!t]
\centering
\scalebox{1}[1.5]{\includegraphics[width=1\linewidth]{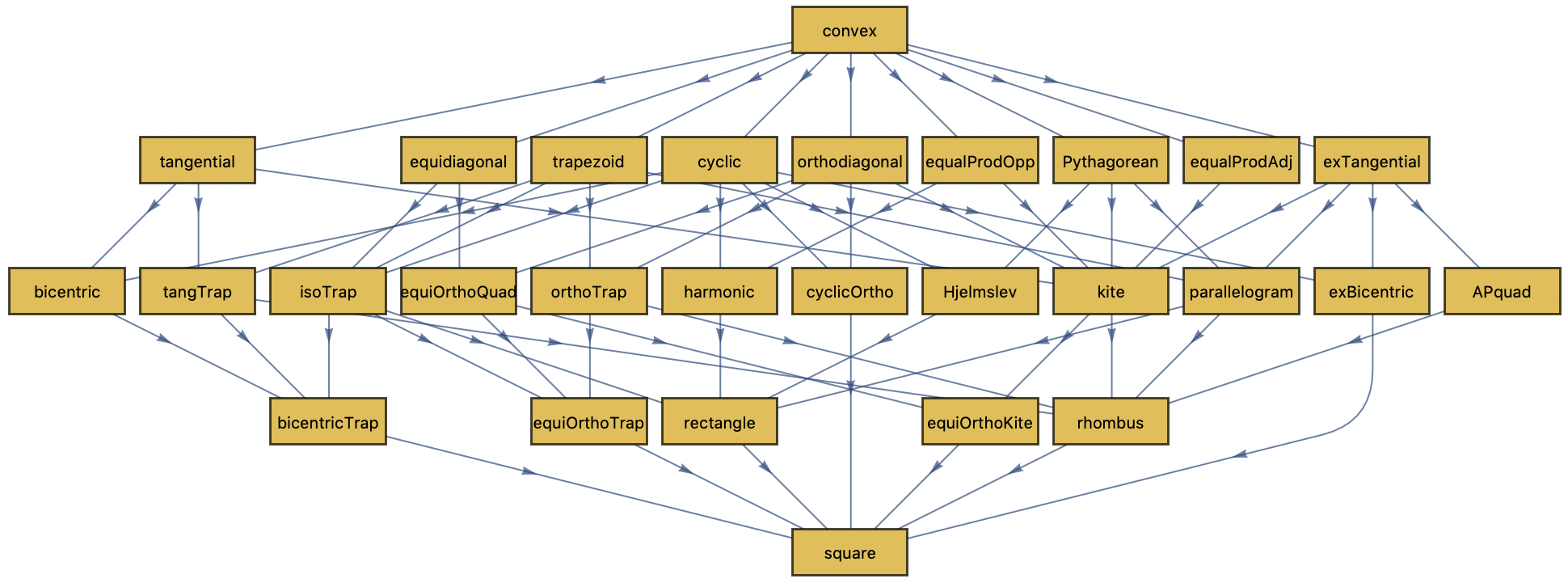}}
\caption{Quadrilateral Shapes}
\label{fig:quadShapes}
\end{figure}

Unless otherwise specified, when we give a theorem about a quadrilateral, we will omit an entry
for a particular shape quadrilateral if the property is known to be true for an ancestor of that quadrilateral.

We do not include results where the central quadrilateral degenerates to a line segment or a point.

\newpage

\section{Methodology}
\label{section:methodology}

In this study, we locate triangle centers in the four radial
triangles. We use Clark Kimberling's definition of a triangle center \cite{KimberlingA}.
More details can be found in section 3 of \cite{relationships}.

We used a computer program called GeometricExplorer to examine the shape of the central quadrilateral.
Starting with each type of quadrilateral listed in
Figure~\ref{fig:quadShapes} for the reference quadrilateral, we picked various choices
for point $E$, the radiator. The types of radiators studied are shown in Table \ref{table:radiators}.

\begin{table}[ht!]
\caption{}
\label{table:radiators}
\begin{center}
\begin{tabular}{|l|l|}
\hline
\multicolumn{2}{|c|}{\Large \strut \textbf{\color{blue}Points Used as Radiators}}\\
\hline
\textbf{name}&\textbf{description}\\ \hline
arbitrary point&any point in the plane of $ABCD$\\ \hline
diagonal point&intersection of the diagonals (QG--P1)\\ \hline
Poncelet point&(QA--P2)\\ \hline
Steiner point&(QA--P3)\\ \hline
vertex centroid&(QA-P1)\\ \hline
area centroid&(QG-P4)\\ \hline
\void{
quasi circumcenter$\dagger$&(QG-P5)\\ \hline
quasi orthocenter$\dagger$&(QG-P6)\\ \hline
quasi nine-point center$\dagger$&(QG-P7)\\ \hline
quasi incenter$\dagger$&\\ \hline
Kirikami center$\dagger$&(QG-P15)\\ \hline
Miquel point$\dagger$&(QL-P1)\\ \hline
\multicolumn{2}{|l|}{$\dagger$ \small This point might be omitted from the final paper.}\\ \hline
}
\end{tabular}
\end{center}
\end{table}

A code in parentheses represents the name for the point as listed in
the Encyclopedia of Quadri-Figures~\cite{EQF}.

For each $n$ from 1 to 1000, we placed center $X_n$
in each of the radial triangles of the reference quadrilateral.
The program then analyzes the central quadrilateral formed by these four centers
and reports if the central quadrilateral has a special shape.
Points at infinity were omitted.
GeometricExplorer uses numerical coordinates (to 15 digits of precision) for locating
all the points. This does not constitute a proof that the result is correct,
but gives us compelling evidence for the validity of the result.

If a theorem in this paper is accompanied by a figure, this means that the
figure was drawn using either Geometer's Sketchpad or GeoGebra.
In either case, we used the drawing program to dynamically vary the points
in the figure. Noticing that the result remains true as the points vary offers
further evidence that the theorem is true.

To prove the results that we have discovered, we use geometric methods, when possible.
If we could not find a purely geometrical proof, we turned to analytic methods using
barycentric coordinates and performing exact symbolic computation using Mathematica.
All proofs can be found in the Mathematica notebooks included in the supplementary material
associated with the paper.

If our only ``proof'' of a particular relationship is by using numerical calculations (and not using exact computation),
then we have colored the center \no{red} in the table of relationships.

\newpage


\section{Results Using an Arbitrary Point}
\label{section:arbitraryPoint}

In this configuration, the radiator, $E$, is any point in the plane of
the reference quadrilateral $ABCD$, not on the boundary.

Our computer analysis found only one special shape  associated with all quadrilaterals
when $E$ is an arbitrary point in the plane. We examined all the types of
quadrilaterals listed in Table 1 and all triangle centers from $X_1$ to $X_{1000}$.
The special shape occurs only when the chosen center is $X_2$, the centroid.
The result is shown below.

\bigskip
\begin{center}
\begin{tabular}{|l|p{2.5in}|}
\hline
\multicolumn{2}{|c|}{\textbf{\color{blue}\large \strut Central Quadrilateral of a General Quadrilateral}}\\ \hline
\textbf{Shape of central quadrilateral}&\textbf{center}\\ \hline
\ru parallelogram&2\\ \hline
\end{tabular}
\end{center}


\begin{theorem}
\label{thm:genArbX2}
Let $E$ be an arbitrary point in the plane of convex quadrilateral $ABCD$.
Let $F$, $G$, $H$, and $I$ be the centroids of triangles $\triangle ABE$, $\triangle BCE$, $\triangle CDE$,
and $\triangle DAE$, respectively (Figure~\ref{fig:genArbX2}).
Then $FGHI$ is a parallelogram.
The sides of the parallelogram are parallel to the diagonals of $ABCD$.
\end{theorem}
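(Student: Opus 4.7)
The plan is to use a direct vector computation. Identify each point with its position vector relative to an arbitrary origin. Since the centroid of a triangle is the average of its three vertices, I would write
\[
F=\tfrac{1}{3}(A+B+E),\quad G=\tfrac{1}{3}(B+C+E),\quad H=\tfrac{1}{3}(C+D+E),\quad I=\tfrac{1}{3}(D+A+E).
\]

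Then I would compute the four side vectors of $FGHI$. The key observation is that $E$ cancels in every difference, as does whichever vertex among $A,B,C,D$ is shared by two adjacent radial triangles. Explicitly,
\[
\overrightarrow{FG}=G-F=\tfrac{1}{3}(C-A),\qquad \overrightarrow{IH}=H-I=\tfrac{1}{3}(C-A),
\]
\[
\overrightarrow{GH}=H-G=\tfrac{1}{3}(D-B),\qquad \overrightarrow{FI}=I-F=\tfrac{1}{3}(D-B).
\]

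From this, $\overrightarrow{FG}=\overrightarrow{IH}$ and $\overrightarrow{GH}=\overrightarrow{FI}$, so opposite sides are equal and parallel, proving $FGHI$ is a parallelogram. Moreover, $\overrightarrow{FG}$ is a scalar multiple of $\overrightarrow{AC}$ and $\overrightarrow{GH}$ is a scalar multiple of $\overrightarrow{BD}$, yielding the final clause that the sides of the parallelogram are parallel to the diagonals of $ABCD$ (and in fact one-third their length).

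There is no real obstacle here; the argument is a one-line cancellation. The only thing worth noting is that the independence from $E$ is not a coincidence but follows from the fact that each consecutive pair of radial triangles shares the edge $EB$, $EC$, $ED$, or $EA$, so two of the three centroid contributions cancel in each difference. This also makes clear why the result fails for most other triangle centers: cancellation requires the center to be a symmetric linear function of the vertices, which uniquely characterizes the centroid (up to scale) among linear combinations with constant weights.
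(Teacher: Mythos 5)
Your proof is correct, and it takes a genuinely different route from the paper's. The paper proceeds synthetically: it first proves a lemma (for triangles $\triangle ABE$ and $\triangle ACE$ sharing vertex $A$ and edge $AE$, the centroids $F$, $G$ satisfy $FG\parallel BC$ and $FG=\tfrac13 BC$) by drawing the medians $AP$, $AQ$ to the shared side, using the $2{:}1$ division property of the centroid and the midpoint connector theorem, and then applies that lemma to each pair of adjacent radial triangles to get $FI\parallel BD\parallel GH$ and $FG\parallel AC\parallel IH$. Your vector computation reaches the same conclusion by writing each centroid as $\tfrac13$ of the sum of its vertices and observing that $E$ and the shared vertex cancel in each difference, e.g.\ $\overrightarrow{FG}=\tfrac13(C-A)=\overrightarrow{IH}$. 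The two arguments encode the same cancellation, but yours delivers equality of the full displacement vectors in one step (hence the parallelogram, the parallelism to the diagonals, and the $\tfrac13$ length ratio all at once), whereas the paper's lemma isolates the direction and length statements in a reusable form that it then invokes verbatim for the equidiagonal (rhombus), orthodiagonal (rectangle), and square corollaries that follow. Your closing remark about the centroid being the only center for which such cancellation occurs is a reasonable heuristic but not a proof; note the paper explicitly leaves the uniqueness of the centroid here as an open question, so you should not present that aside as settled.
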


\begin{figure}[h!t]
\centering
\includegraphics[width=0.35\linewidth]{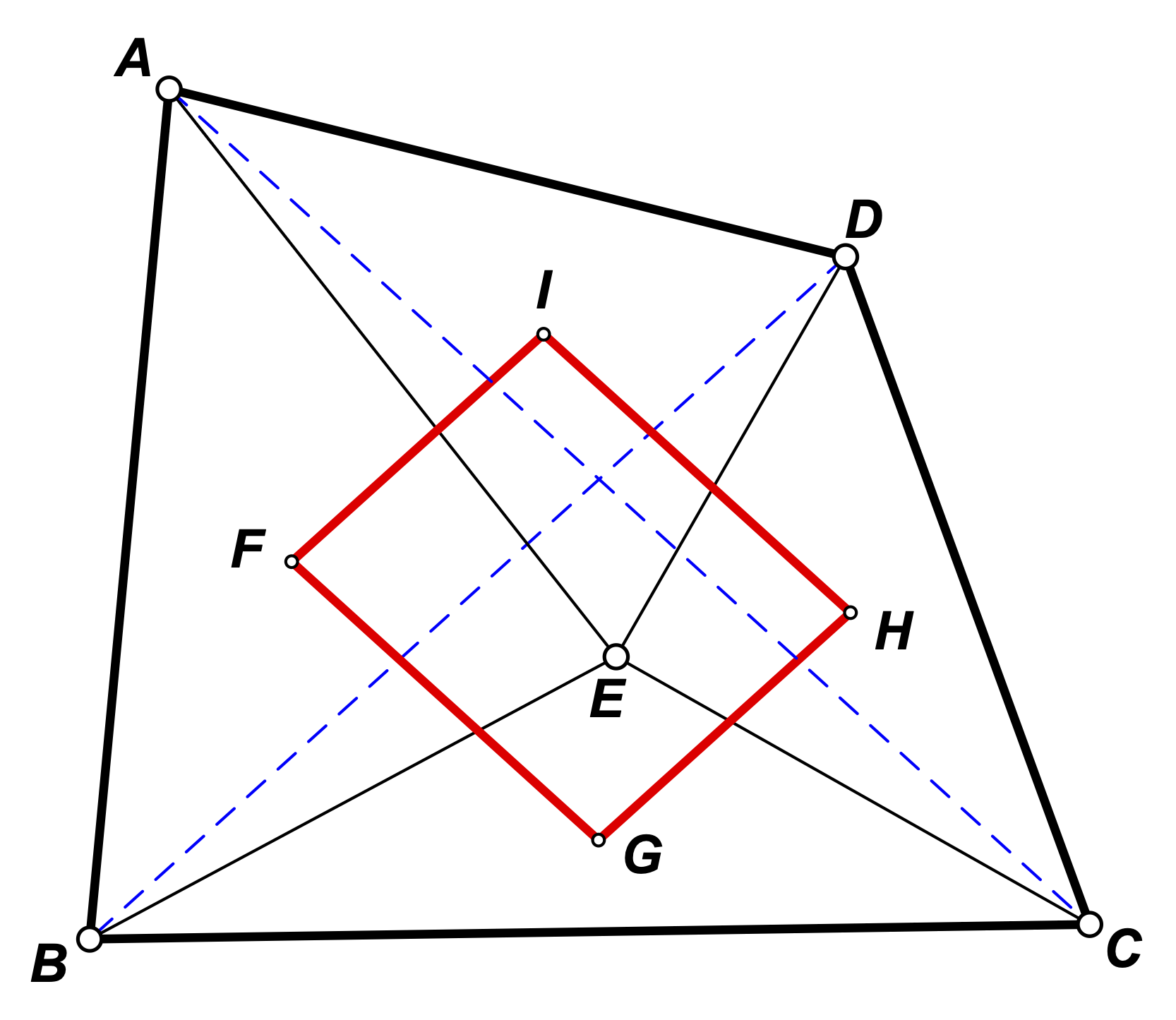}
\caption{General quadrilateral: centroids $\implies$ parallelogram}
\label{fig:genArbX2}
\end{figure}

A geometric proof is straightforward. We start with a lemma.

\begin{lemma}
\label{lemma:genArbX2Lemma}
Let $E$ be an arbitrary point in the plane of $\triangle ABC$.
Let $F$ be the centroid of triangle $\triangle ABE$ and let $G$ be the centroid of $\triangle ACE$(Figure~\ref{fig:genArbX2Lemma}).
Then $FG\parallel BC$ and $FG=BC/3$.
\end{lemma}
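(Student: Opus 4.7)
The plan is to exploit the fact that both centroids lie on medians emanating from the shared midpoint of $AE$, giving immediately a pair of similar triangles. I would introduce $M$, the midpoint of segment $AE$, since this midpoint is common to both triangles $\triangle ABE$ and $\triangle ACE$: in the first triangle $BM$ is a median, and in the second $CM$ is a median.

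Next I would invoke the standard fact that the centroid of a triangle lies two-thirds of the way along each median from the vertex to the opposite midpoint. Applied to $\triangle ABE$ with median $BM$, this places $F$ on segment $BM$ with $MF = \tfrac{1}{3} MB$. Applied to $\triangle ACE$ with median $CM$, it places $G$ on segment $CM$ with $MG = \tfrac{1}{3} MC$.

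The conclusion is then a one-line similarity argument: the triangles $\triangle MFG$ and $\triangle MBC$ share the angle at $M$, and the sides containing that angle are in the common ratio $1/3$. Hence $\triangle MFG \sim \triangle MBC$ with ratio $1/3$, which yields both $FG \parallel BC$ and $FG = BC/3$.

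There is no real obstacle here; the only decision is stylistic. A vector/barycentric proof is even shorter, noting that $G - F = \tfrac{1}{3}\bigl((A+C+E) - (A+B+E)\bigr) = \tfrac{1}{3}(C - B)$, which instantly delivers both assertions. I would still present the synthetic version based on $M$ since the paper favors geometric proofs when available, and this one fits in a couple of lines.
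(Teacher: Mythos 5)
Your proof is correct, but it takes a different route from the paper's. The paper works with the two medians issuing from the shared vertex $A$ --- namely $AP$ and $AQ$, where $P$ and $Q$ are the midpoints of $EB$ and $EC$ --- and composes two steps: the centroids cut these medians in ratio $2:1$, so $FG\parallel PQ$ with $FG=\tfrac23 PQ$, and then the midline $PQ$ of $\triangle EBC$ gives $PQ\parallel BC$ with $PQ=\tfrac12 BC$. You instead use the medians issuing from $B$ and $C$ toward the \emph{common} midpoint $M$ of $AE$, observe that $MF=\tfrac13 MB$ and $MG=\tfrac13 MC$, and conclude in a single SAS-similarity (equivalently, a homothety centered at $M$ with ratio $\tfrac13$ sending $B\mapsto F$ and $C\mapsto G$). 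Your version is arguably tighter: it isolates one center of similitude and gets both conclusions from one dilation, whereas the paper needs two parallel-line facts chained together; the paper's version has the mild advantage of only quoting the centroid ratio in its most familiar ``two-thirds from the vertex'' form and the triangle midline theorem. One small point of care in your write-up: the shared angle at $M$ is an honest shared angle only because $F$ lies on segment $MB$ and $G$ on segment $MC$ (both on the same side of $M$), which does hold here since the centroid lies strictly between the midpoint and the opposite vertex; stating that explicitly makes the similarity airtight. Your closing vector identity $G-F=\tfrac13(C-B)$ is also correct and is the shortest complete argument of all.
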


\begin{figure}[h!t]
\centering
\includegraphics[width=0.3\linewidth]{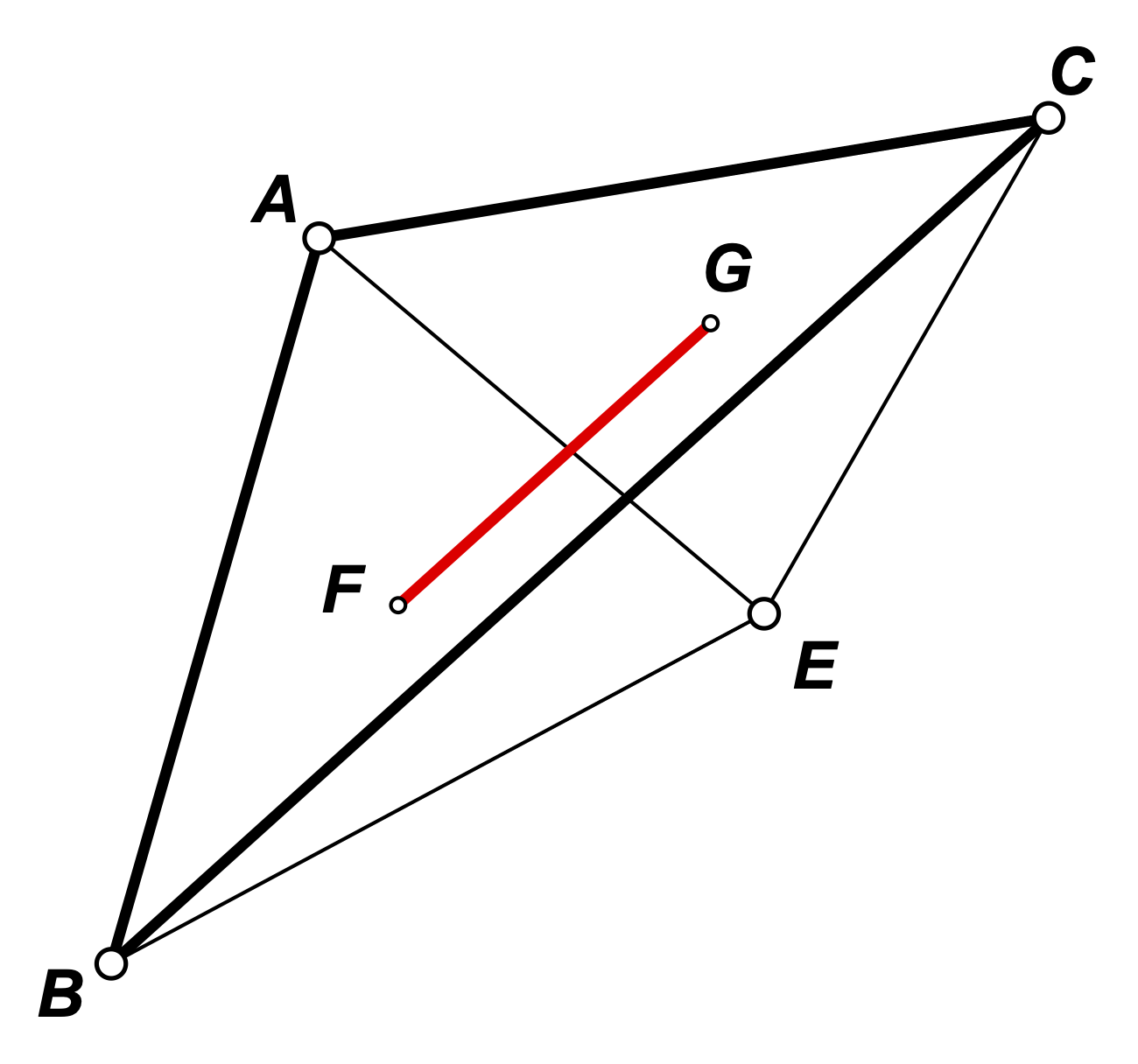}
\caption{}
\label{fig:genArbX2Lemma}
\end{figure}

\newpage

\begin{proof}
Let $AP$ and $AQ$ be the medians of triangles $AEB$ and $AEC$, respectively (Figure~\ref{fig:genArbX2Proof}).
Then $AF/FP=2$ and $AG/GQ=2$ which implies $FG\parallel PQ$ and $FG=\frac23 PQ$.
Since $P$ and $Q$ are the midpoints of $EB$ and $EC$, respectively, we have $BP/PE=1$ and $CQ/QE=1$
which implies that $PQ\parallel BC$ and $PQ=BC/2$.
Thus, $FG\parallel BC$ and $FG=\frac23 PQ=\frac23 (\frac12 BC)=\frac13 BC$.

\begin{figure}[h!t]
\centering
\includegraphics[width=0.3\linewidth]{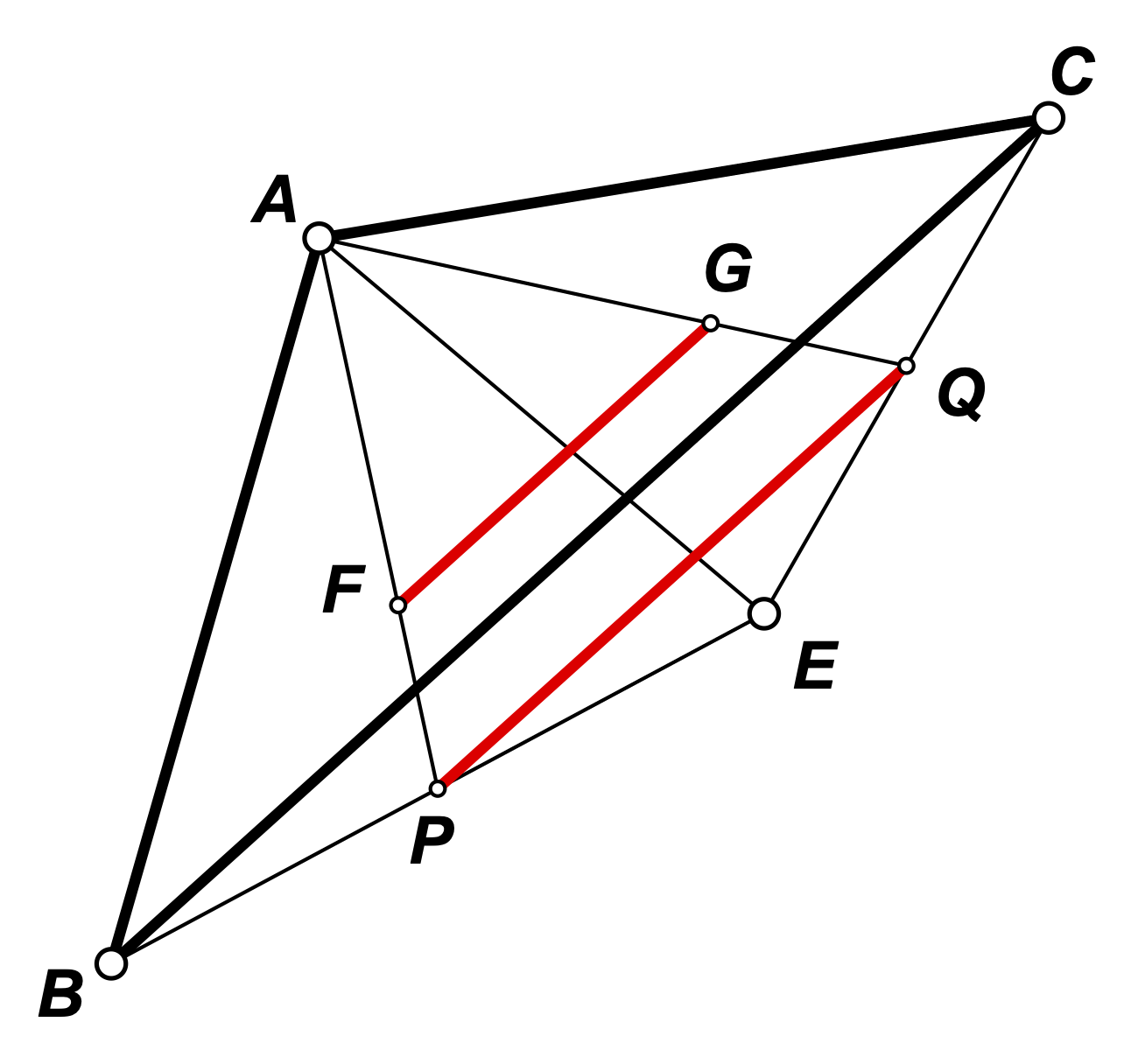}
\caption{}
\label{fig:genArbX2Proof}
\end{figure}
\end{proof}

Now on to the proof of Theorem~\ref{thm:genArbX2}. Refer back to Figure~\ref{fig:genArbX2}.

\begin{proof}
By Lemma~\ref{lemma:genArbX2Lemma}, $FI\parallel BD$.
Similarly, $GH\parallel BD$. Thus, $FI\parallel GH$.
In the same way, $FG\parallel IH$. Hence, $FGHI$ is a parallelogram.
\end{proof}

Our computer study found special shapes associated with equidiagonal and orthodiagonal quadrilaterals.
The results are shown in the following three tables.

\bigskip
\begin{center}
\begin{tabular}{|l|p{2.5in}|}
\hline
\multicolumn{2}{|c|}{\textbf{\color{blue}\large \strut Central Quadrilateral of an Equidiagonal Quadrilateral}}\\ \hline
\textbf{Shape of central quadrilateral}&\textbf{center}\\ \hline
\ru rhombus&2\\ \hline
\end{tabular}
\end{center}

\begin{theorem}
\label{thm:equiArbX2}
Let $E$ be an arbitrary point in the plane of equidiagonal quadrilateral $ABCD$.
Let $F$, $G$, $H$, and $I$ be the centroids of triangles $\triangle ABE$, $\triangle BCE$, $\triangle CDE$,
and $\triangle DAE$, respectively (Figure~\ref{fig:genArbX2}).
Then $FGHI$ is a rhombus.
\end{theorem}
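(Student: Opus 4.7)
The plan is to leverage Theorem \ref{thm:genArbX2} together with the length statement in Lemma \ref{lemma:genArbX2Lemma}, so that almost no additional work is required. Since Theorem \ref{thm:genArbX2} already establishes that $FGHI$ is a parallelogram for an arbitrary reference quadrilateral, it suffices to show that two \emph{adjacent} sides of $FGHI$ are equal in length; then $FGHI$ is a parallelogram with adjacent equal sides, hence a rhombus.

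First I would revisit Lemma \ref{lemma:genArbX2Lemma} and observe that it delivers more than parallelism: it says that the segment between two centroids equals one-third of the far side of the enclosing triangle. I would apply it twice, each time choosing the enclosing triangle so that the far side is a diagonal of $ABCD$. Taking the enclosing triangle to be $\triangle ABC$ (with $E$ interior to its plane) yields $FG \parallel AC$ and
\[
FG \;=\; \tfrac{1}{3}\,AC,
\]
since $F$ and $G$ are the centroids of $\triangle ABE$ and $\triangle CBE$, both sharing vertex $B$. Taking the enclosing triangle to be $\triangle ABD$ (with $F$ and $I$ the centroids of $\triangle ABE$ and $\triangle ADE$, sharing vertex $A$) yields $FI \parallel BD$ and
\[
FI \;=\; \tfrac{1}{3}\,BD.
\]

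Now I would invoke the equidiagonal hypothesis $AC = BD$, which immediately gives $FG = FI$. Combined with Theorem \ref{thm:genArbX2}, this shows that $FGHI$ is a parallelogram with two adjacent sides of equal length, and therefore a rhombus.

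There really is no substantive obstacle; the main point is that the previous lemma was stated with a quantitative length conclusion, not merely a parallelism conclusion, and so it can be reused verbatim. The only mild care needed is in the bookkeeping: for each of the four sides $FG$, $GH$, $HI$, $IF$ one must identify the correct shared vertex of the two adjacent radial triangles and apply the lemma with the appropriate enclosing triangle. Doing this once for a pair of adjacent sides is all the proof requires.
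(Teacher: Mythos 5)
Your proposal is correct and follows essentially the same route as the paper: apply Lemma~\ref{lemma:genArbX2Lemma} to get $FG=\frac13 AC$ and $FI=\frac13 BD$, use the equidiagonal hypothesis to conclude $FG=FI$, and combine with the parallelogram conclusion of Theorem~\ref{thm:genArbX2}. No gaps.
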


\begin{proof}
By Lemma~\ref{lemma:genArbX2Lemma}, $FI=\frac13 BD$ and $FG=\frac13 AC=\frac13 BD$, so $FI=FG$.
But a parallelogram with two equal adjacent sides is a rhombus.
\end{proof}

\bigskip
\begin{center}
\begin{tabular}{|l|p{2.5in}|}
\hline
\multicolumn{2}{|c|}{\textbf{\color{blue}\large \strut Central Quadrilateral of an Orthodiagonal Quadrilateral}}\\ \hline
\textbf{Shape of central quadrilateral}&\textbf{center}\\ \hline
\ru rectangle&2\\ \hline
\end{tabular}
\end{center}

\begin{theorem}
\label{thm:orthoArbX2}
Let $E$ be an arbitrary point in the plane of orthodiagonal quadrilateral $ABCD$.
Let $F$, $G$, $H$, and $I$ be the centroids of triangles $\triangle ABE$, $\triangle BCE$, $\triangle CDE$,
and $\triangle DAE$, respectively (Figure~\ref{fig:genArbX2}).
Then $FGHI$ is a rectangle.
\end{theorem}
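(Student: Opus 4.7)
The plan is to reuse Theorem~\ref{thm:genArbX2} almost verbatim and exploit a single extra hypothesis: orthogonality of the diagonals of $ABCD$. The key observation already in hand is that, for an arbitrary radiator $E$ in an arbitrary convex quadrilateral, the central quadrilateral of centroids $FGHI$ is a parallelogram \emph{whose sides are parallel to the diagonals $AC$ and $BD$}. Concretely, Lemma~\ref{lemma:genArbX2Lemma} applied to $\triangle ABD$ and $\triangle BCD$ (or equivalently to the construction in Theorem~\ref{thm:genArbX2}) gives $FI\parallel BD$ and $GH\parallel BD$, while $FG\parallel AC$ and $IH\parallel AC$.

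First I would invoke Theorem~\ref{thm:genArbX2} to conclude that $FGHI$ is a parallelogram with $FG\parallel AC$ and $FI\parallel BD$. Next, I would use the defining algebraic condition of an orthodiagonal quadrilateral, namely $AC\perp BD$, to deduce that $FG\perp FI$. A parallelogram with one right angle is a rectangle, so $FGHI$ is a rectangle, completing the proof.

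The argument is essentially a one-liner on top of the previous theorem, so there is no real obstacle — the ``hard part'' is merely noticing that the parallel-to-the-diagonals information produced by Lemma~\ref{lemma:genArbX2Lemma} automatically transports perpendicularity of the diagonals of $ABCD$ into perpendicularity of adjacent sides of $FGHI$. No additional case analysis is required (the result holds regardless of where $E$ lies, as long as the four radial triangles are non-degenerate so that their centroids are defined), and no coordinate computation is needed.
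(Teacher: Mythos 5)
Your proposal is correct and follows essentially the same route as the paper: the paper's proof likewise cites Lemma~\ref{lemma:genArbX2Lemma} to get $FI\parallel BD$ and $FG\parallel AC$, uses $AC\perp BD$ to conclude $FI\perp FG$, and finishes by noting that a parallelogram with a right angle is a rectangle. No gaps.
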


\begin{proof}
By Lemma~\ref{lemma:genArbX2Lemma}, $FI\parallel BD$ and $FG\parallel AC$. Since $BD\perp AC$,
we can conclude that $FI\perp FG$.
But a parallelogram with two perpendicular adjacent sides is a rectangle.
\end{proof}

\bigskip
\begin{center}
\begin{tabular}{|l|p{2.5in}|}
\hline
\multicolumn{2}{|c|}{\textbf{\color{blue}\large \strut Central Quadr of an Equidiagonal Orthodiagonal Quadr}}\\ \hline
\textbf{Shape of central quadrilateral}&\textbf{center}\\ \hline
\ru square&2\\ \hline
\end{tabular}
\end{center}

\begin{theorem}
\label{thm:equiOrthoArbX2}
Let $E$ be an arbitrary point in the plane of equidiagonal orthodiagonal quadrilateral $ABCD$.
Let $F$, $G$, $H$, and $I$ be the centroids of triangles $\triangle ABE$, $\triangle BCE$, $\triangle CDE$,
and $\triangle DAE$, respectively (Figure~\ref{fig:genArbX2}).
Then $FGHI$ is a square.
\end{theorem}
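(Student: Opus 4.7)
The plan is to piece this together from the two immediately preceding theorems rather than redoing any computation. An equidiagonal orthodiagonal quadrilateral is, by definition, both equidiagonal and orthodiagonal, so the hypotheses of Theorem~\ref{thm:equiArbX2} and Theorem~\ref{thm:orthoArbX2} are simultaneously satisfied.

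First I would invoke Theorem~\ref{thm:equiArbX2} to conclude that $FGHI$ is a rhombus. Then I would invoke Theorem~\ref{thm:orthoArbX2} on the same configuration to conclude that $FGHI$ is a rectangle. Since a quadrilateral that is both a rhombus (all four sides equal) and a rectangle (all four angles right) is by definition a square, the conclusion follows immediately.

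If one prefers a single self-contained argument, Lemma~\ref{lemma:genArbX2Lemma} already supplies everything needed in one pass: it gives $FG\parallel AC$ with $FG=\tfrac13 AC$ and $FI\parallel BD$ with $FI=\tfrac13 BD$ (and similarly for the other two sides). The equidiagonal hypothesis $AC=BD$ then forces $FG=FI$, while the orthodiagonal hypothesis $AC\perp BD$ forces $FG\perp FI$. Combined with the parallelogram conclusion of Theorem~\ref{thm:genArbX2}, this makes $FGHI$ a parallelogram with equal, perpendicular adjacent sides, i.e.\ a square.

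There is no real obstacle here; the only thing to guard against is a degeneracy where the central quadrilateral collapses, but the paper has explicitly excluded such cases, and in any event $FG=\tfrac13 AC>0$ as soon as $ABCD$ is a genuine convex quadrilateral. So the proof is essentially a one-line combination of the two previous results.
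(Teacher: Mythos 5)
Your proposal is correct and is essentially identical to the paper's own proof, which likewise combines Theorem~\ref{thm:equiArbX2} (rhombus) and Theorem~\ref{thm:orthoArbX2} (rectangle) to conclude that $FGHI$ is a square. Your optional self-contained version via Lemma~\ref{lemma:genArbX2Lemma} is also fine but adds nothing beyond what the paper already establishes.
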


\begin{proof}
By Theorem~\ref{thm:equiArbX2}, $FGHI$ is a rhombus.
By Theorem~\ref{thm:orthoArbX2}, $FGHI$ is a rectangle.
But a figure that is both a rhombus and a rectangle must be a square.
\end{proof}

\begin{open}
Is the centroid the only triangle center for which the central quadrilateral is a parallelogram?
\end{open}

Our computer study found several interesting results for the central quadrilateral of a rectangle.
These are shown in the following table.

The symbol $\mathbb{S}$ denotes the set of all triangle centers that lie on the Euler line of the reference triangle and have constant Shinagawa coefficients.
Shinagawa coefficients are defined in \cite{ETC}.
The first few $n$ for which $X_n$ has constant Shinagawa coefficients are $n=$2, 3, 4, 5, 20, 140, 376, 381, 382, 546-550, 631, and 632.

\bigskip
\begin{center}
\begin{tabular}{|l|p{3.3in}|}
\hline
\multicolumn{2}{|c|}{\textbf{\color{blue}\large \strut Central Quadrilaterals of Rectangles}}\\ \hline
\textbf{Shape of central quad}&\textbf{centers}\\ \hline
\ru orthodiagonal&$\mathbb{S}$\\ \hline
\end{tabular}
\end{center}

\newpage

\begin{theorem}
\label{thm:rectArbXShinagawa}
Let $E$ be an arbitrary point in the plane of rectangle $ABCD$.
Let $X$ be a triangle center with constant Shinagawa coefficients.
Let $F$, $G$, $H$, and $I$ be the $X$-points of triangles $\triangle ABE$, $\triangle BCE$, $\triangle CDE$,
and $\triangle DAE$, respectively.
Then $FGHI$ is orthodiagonal. The diagonals of $FGHI$ are parallel to the sides of $ABCD$.
Figure~\ref{fig:rectArbX20} shows the case when $X$ is the de Longchamps Point ($X_{20}$).
Figure~\ref{fig:rectArbX381} shows the case when $X$ is the $X_{381}$ point.
\end{theorem}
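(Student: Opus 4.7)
My plan is to reduce the general Shinagawa case to two base cases --- taking $X$ to be the centroid ($X_2$) and the circumcenter ($X_3$) --- and handle these two cases directly. Writing $F_k, G_k, H_k, I_k$ for the $X_k$-points of the four radial triangles, the defining feature of a triangle center with constant Shinagawa coefficients is that in every triangle $X$ is the \emph{same} affine combination $X = \lambda X_2 + \mu X_3$, with $\lambda + \mu = 1$ and $\lambda, \mu$ triangle-independent constants. Applying this in each radial triangle gives $F = \lambda F_2 + \mu F_3$ and similarly for $G, H, I$, so the diagonals of the central quadrilateral decompose as $H - F = \lambda(H_2 - F_2) + \mu(H_3 - F_3)$ and $I - G = \lambda(I_2 - G_2) + \mu(I_3 - G_3)$.

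For the quadrilateral $F_2 G_2 H_2 I_2$ of centroids, I would place the rectangle with $A=(0,0)$, $B=(a,0)$, $C=(a,b)$, $D=(0,b)$ and use $F_2 = (A+B+E)/3$, $G_2 = (B+C+E)/3$, $H_2 = (C+D+E)/3$, $I_2 = (D+A+E)/3$. A one-line computation gives $H_2 - F_2 = (0, 2b/3)$ and $I_2 - G_2 = (-2a/3, 0)$, so the two diagonals of this quadrilateral are parallel to $AD$ and $AB$ respectively.

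For the quadrilateral $F_3 G_3 H_3 I_3$ of circumcenters, the key observation is that in a rectangle the perpendicular bisector of $AB$ coincides with the perpendicular bisector of $CD$ (both equal the line $x = a/2$). Since $F_3$ is the circumcenter of $\triangle ABE$ and $H_3$ is the circumcenter of $\triangle CDE$, both lie on this common perpendicular bisector; hence $F_3 H_3$ is parallel to $AD$. The analogous argument applied to sides $BC$ and $DA$, whose perpendicular bisectors both equal $y = b/2$, shows that $G_3 I_3$ is parallel to $AB$.

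Combining via the decomposition above, $H - F$ is a linear combination of two vectors parallel to $AD$ and hence is parallel to $AD$; similarly $I - G$ is parallel to $AB$. Since $AB \perp AD$, the diagonals $FH$ and $GI$ of the central quadrilateral are perpendicular and parallel to the sides of the rectangle, which is exactly the conclusion. The main obstacle --- and the only place where the Shinagawa hypothesis actually enters --- is the first step: rigorously extracting from the definition of ``constant Shinagawa coefficients'' the fact that $X$ really is a universal, triangle-independent affine combination of $X_2$ and $X_3$, so that the same scalars $\lambda,\mu$ work in all four radial triangles simultaneously. Once that is pinned down, the rest of the argument is elementary coordinate geometry.
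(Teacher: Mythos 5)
Your proposal is correct, and it is a genuinely different (and more informative) route than the paper's: the paper states Theorem~\ref{thm:rectArbXShinagawa} with no in-text argument at all, deferring, per its methodology section, to exact symbolic computation in barycentric coordinates carried out in the supplementary Mathematica notebooks. Your decomposition $X=\lambda X_2+\mu X_3$ with triangle-independent $\lambda+\mu=1$ is exactly the right way to exploit the hypothesis: the centers listed as having constant Shinagawa coefficients ($X_2$, $X_3$, $X_4$, $X_5$, $X_{20}$, $X_{140}$, $X_{376}$, $X_{381}$, $X_{382}$, $X_{546}$--$X_{550}$, $X_{631}$, $X_{632}$) are precisely the fixed affine combinations of the circumcenter and orthocenter, since the standard barycentric representatives of $X_3$ and $X_4$ entering the Shinagawa representation have symmetric coordinate sums ($2S^2$ and $S^2$), so constant coefficients $(G,H)$ normalize to the triangle-independent combination $\tfrac{2G}{2G+H}X_3+\tfrac{H}{2G+H}X_4$. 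Your two base cases are both verified correctly ($H_2-F_2=(C+D-A-B)/3$ is parallel to $AD$; the circumcenters $F_3,H_3$ share the common perpendicular bisector $x=a/2$ of the parallel sides $AB$ and $CD$), and linearity transfers the direction of each diagonal to the general center. What your approach buys is a conceptual explanation of \emph{why} the class $\mathbb{S}$ is the natural hypothesis, and it would extend verbatim to any center lying at a fixed affine position on the Euler line beyond $n\le 1000$; what the paper's computational approach buys is immunity from the one point you rightly flag as needing care, namely pinning down that ``constant Shinagawa coefficients'' really does yield a universal affine combination. The only cosmetic caveat: for special positions of $E$ a diagonal of $FGHI$ could degenerate to a point, but the paper explicitly excludes degenerate central quadrilaterals, so this does not affect the claim.
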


\begin{figure}[h!t]
\centering
\includegraphics[width=0.7\linewidth]{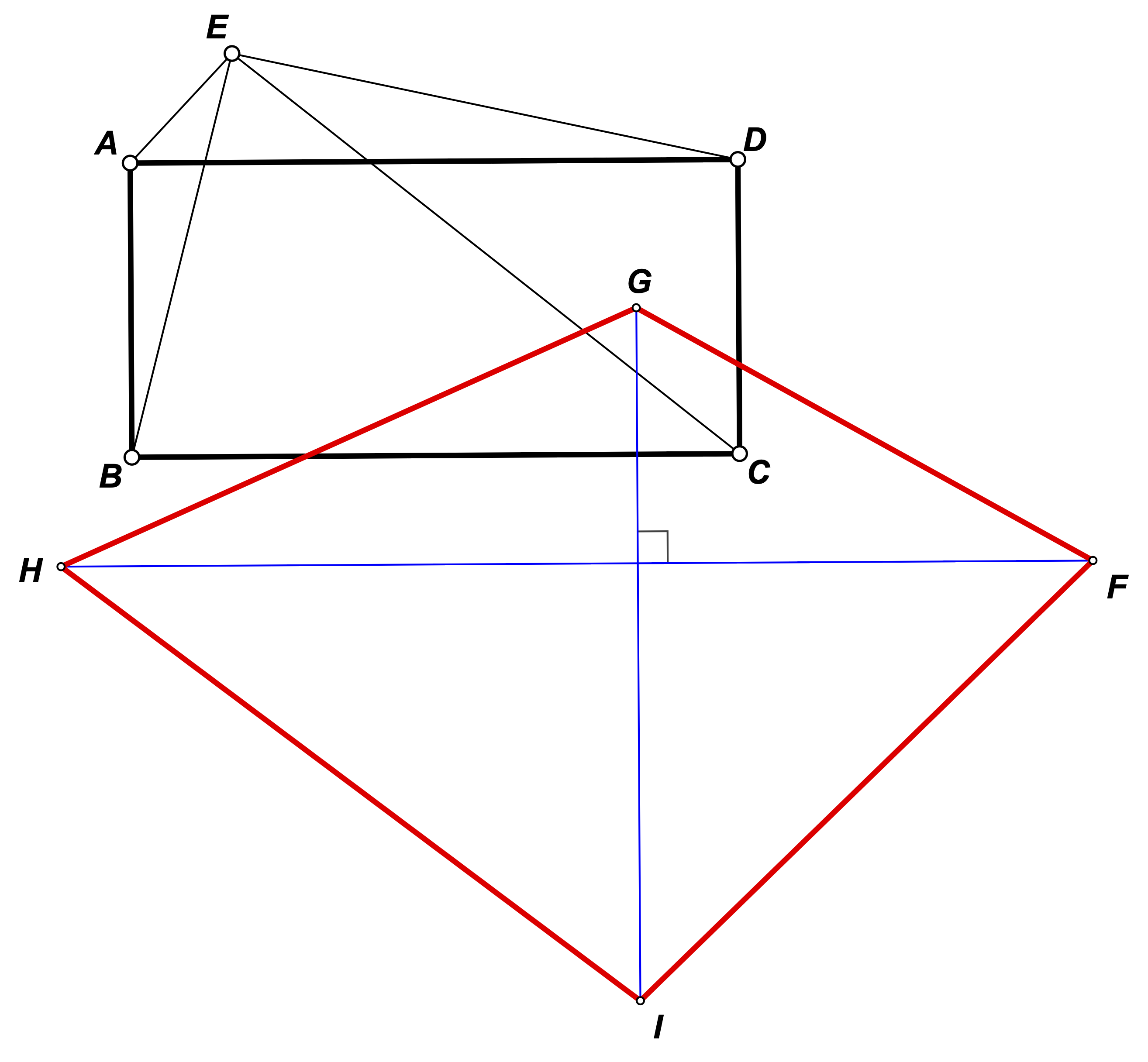}
\caption{rectangle, $X_{20}$-points $\implies$ orthodiagonal}
\label{fig:rectArbX20}
\end{figure}

\begin{figure}[h!t]
\centering
\includegraphics[width=0.7\linewidth]{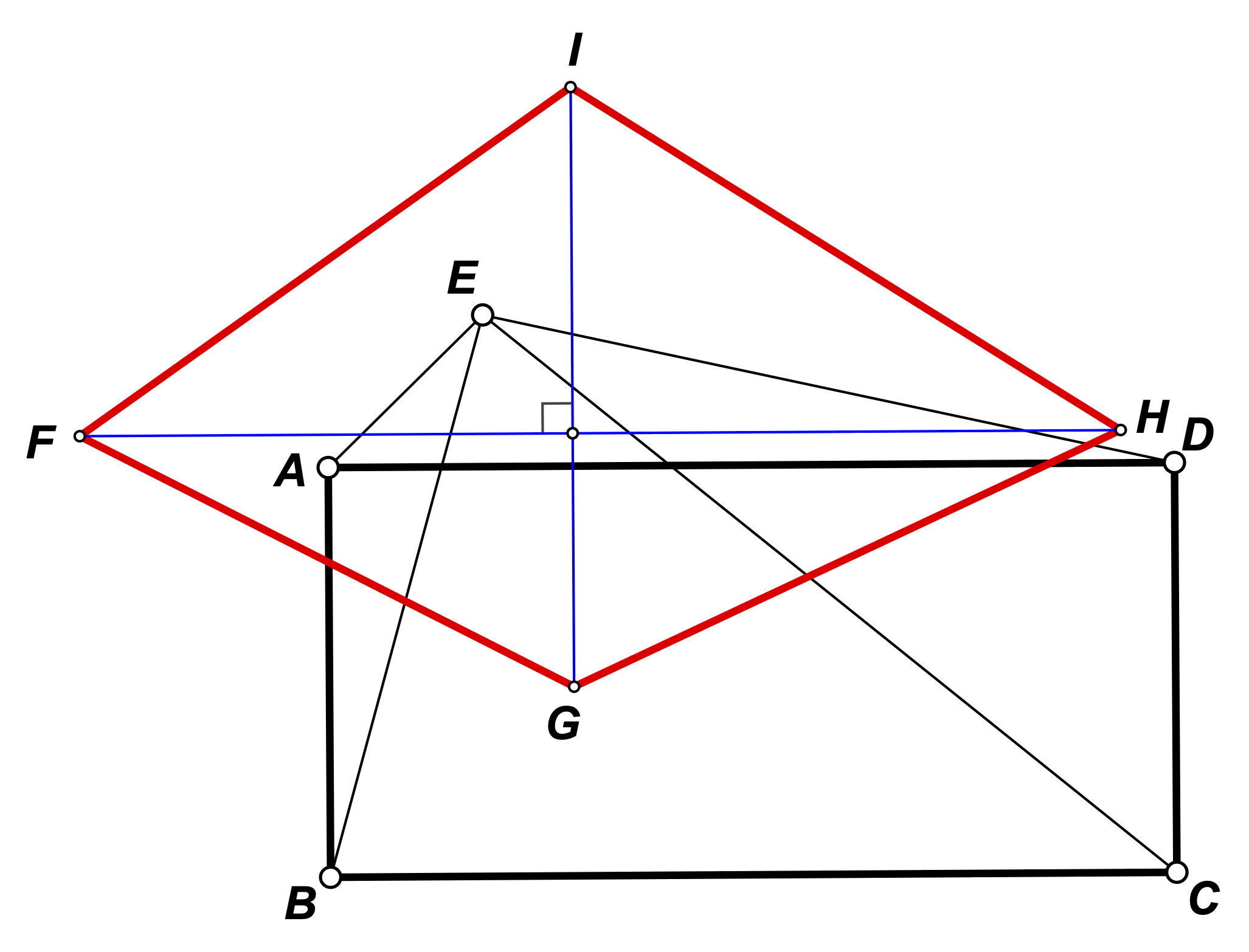}
\caption{rectangle, $X_{381}$-points $\implies$ orthodiagonal}
\label{fig:rectArbX381}
\end{figure}

\newpage

Our computer study found several interesting results for the central quadrilateral of a square.
These are shown in the following table.
Results that are true for rectangles or equidiagonal orthodiagonal quadrilaterals are omitted.

\begin{center}
\begin{tabular}{|l|p{3.3in}|}
\hline
\multicolumn{2}{|c|}{\textbf{\color{blue}\large \strut Central Quadrilaterals of Squares}}\\ \hline
\textbf{Shape of central quad}&\textbf{centers}\\ \hline
\ru square&2\\ \hline
\ru cyclic&99, 925\\ \hline
\ru equidiagonal orthodiagonal&372, 373, 640\\ \hline
\end{tabular}
\end{center}

\begin{theorem}
\label{thm:sqArbX2}
Let $E$ be an arbitrary point in the plane of square $ABCD$.
Let $F$, $G$, $H$, and $I$ be the centroids of triangles $\triangle ABE$, $\triangle BCE$, $\triangle CDE$,
and $\triangle DAE$, respectively (Figure~\ref{fig:sqArbX2}).
Then $FGHI$ is a square.
\end{theorem}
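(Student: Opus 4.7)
The plan is to observe that a square is simultaneously equidiagonal and orthodiagonal, which means Theorem~\ref{thm:equiOrthoArbX2} applies directly. So the proof reduces to a one-line appeal to that earlier result (or, if one prefers, to the conjunction of Theorems~\ref{thm:equiArbX2} and \ref{thm:orthoArbX2}).

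More concretely, I would first note that the diagonals $AC$ and $BD$ of a square have equal length (so $ABCD$ is equidiagonal) and are perpendicular (so $ABCD$ is orthodiagonal). Hence $ABCD$ is an equidiagonal orthodiagonal quadrilateral. Theorem~\ref{thm:equiOrthoArbX2} then immediately yields that $FGHI$ is a square.

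Alternatively, if I wanted to avoid a single-step invocation, I could argue: by Theorem~\ref{thm:equiArbX2}, equidiagonality of $ABCD$ forces $FGHI$ to be a rhombus; by Theorem~\ref{thm:orthoArbX2}, orthodiagonality of $ABCD$ forces $FGHI$ to be a rectangle; and a quadrilateral that is both a rhombus and a rectangle is a square. Either route produces essentially the same proof.

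There is no real obstacle here, since all the work has already been done in Lemma~\ref{lemma:genArbX2Lemma} and Theorems~\ref{thm:equiArbX2}--\ref{thm:equiOrthoArbX2}; the only thing to verify is the trivial geometric fact that the diagonals of a square are equal in length and mutually perpendicular. No additional computation is required.
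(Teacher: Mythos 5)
Your proposal is correct, and it is arguably cleaner than what the paper does: the paper's entire proof of Theorem~\ref{thm:sqArbX2} is a citation to Theorem~6.2 of an earlier paper (\cite{relationships}), whereas you derive the result internally from Theorem~\ref{thm:equiOrthoArbX2} (equivalently, from the conjunction of Theorems~\ref{thm:equiArbX2} and \ref{thm:orthoArbX2} via Lemma~\ref{lemma:genArbX2Lemma}). Your observation that a square has equal, perpendicular diagonals and is therefore an equidiagonal orthodiagonal quadrilateral is exactly the ``ancestor'' relationship the paper itself encodes in Figure~\ref{fig:quadShapes}, so the one-line deduction is fully justified by the paper's own conventions. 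What your route buys is self-containment --- no external reference is needed, and the logical dependence on the centroid lemma is made explicit; what the paper's citation buys is merely brevity and a pointer to where the result first appeared. (Indeed, given the paper's stated policy of omitting results already implied by an ancestor shape, Theorem~\ref{thm:sqArbX2} is strictly subsumed by Theorem~\ref{thm:equiOrthoArbX2}, which is precisely the content of your argument.)
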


\begin{figure}[h!t]
\centering
\includegraphics[width=0.3\linewidth]{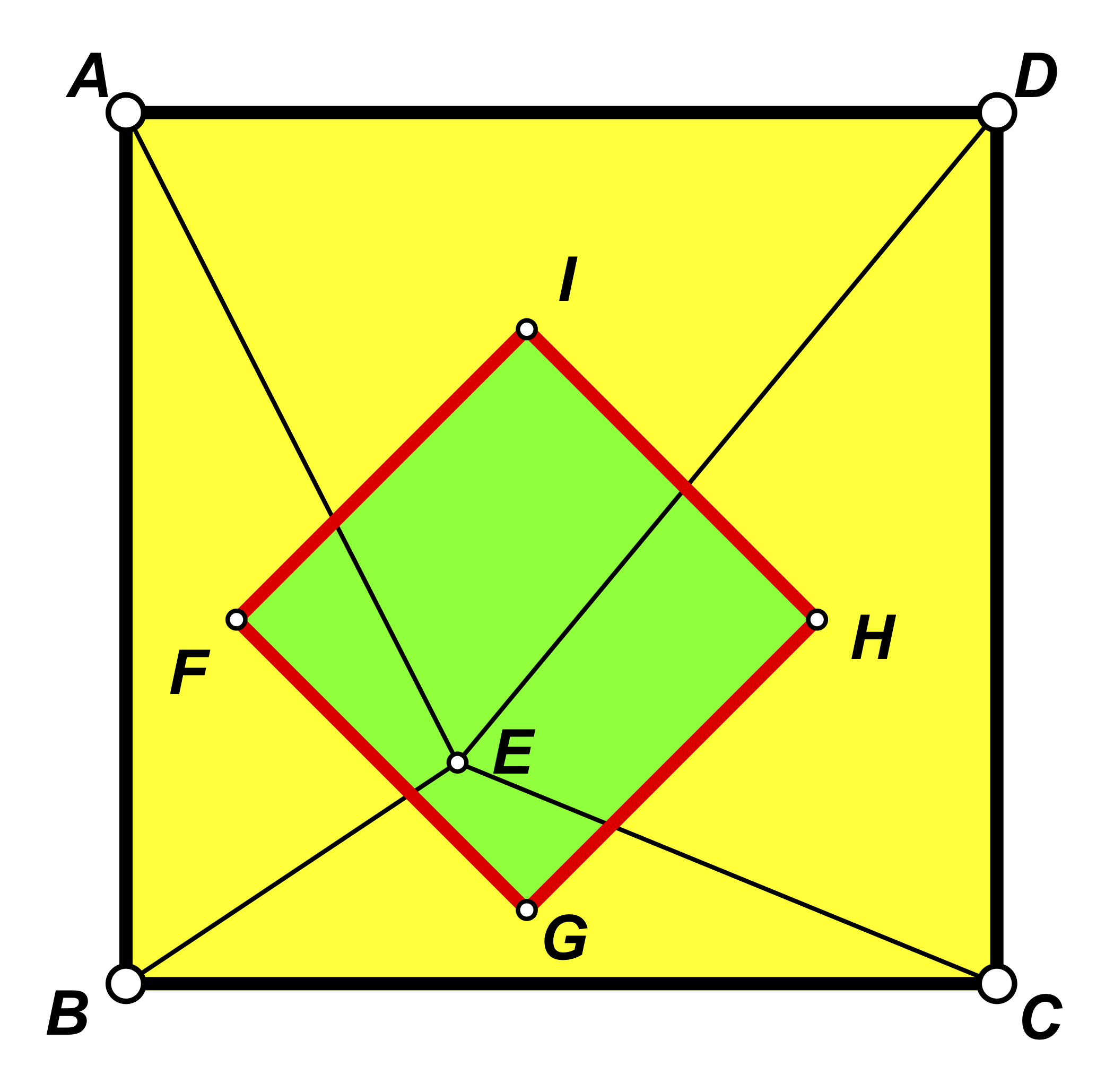}
\caption{square, $X_{2}$-points $\implies$ square}
\label{fig:sqArbX2}
\end{figure}

\begin{proof}
This is Theorem 6.2 in \cite{relationships}.
\end{proof}


\begin{theorem}
\label{thm:sqArbX99}
Let $E$ be an arbitrary point in the plane of square $ABCD$.
Let $n$ be 99 or 925.
Let $F$, $G$, $H$, and $I$ be the $X_{n}$-points of triangles $\triangle ABE$, $\triangle BCE$, $\triangle CDE$,
and $\triangle DAE$, respectively.
Then $FGHI$ is cyclic and $E$ lies on the circle $FGHI$.
(Figure~\ref{fig:sqArbX99} shows the case when $n=99$.)
\end{theorem}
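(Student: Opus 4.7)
The plan is to exploit a key structural fact: both $X_{99}$ (the Steiner point) and $X_{925}$ are triangle centers that lie on the circumcircle of the reference triangle. Consequently, each of the four central points $F,G,H,I$ lies on the circumcircle of its respective radial triangle, so we obtain four cyclic quadruples $\{E,A,B,F\}$, $\{E,B,C,G\}$, $\{E,C,D,H\}$, $\{E,D,A,I\}$. The concyclicity of $E$ with each central point is therefore free, and the claim reduces to showing that the four circumcircles of the radial triangles meet $E$ in such a way that the four additional intersections $F,G,H,I$ lie on a common circle through $E$ as well.

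First I would convert this into an angular condition. Using the four cyclic quadruples listed above, each angle $\angle BEF$, $\angle CEG$, $\angle DEH$, $\angle AEI$ is determined by the inscribed-angle position of $X_{99}$ (resp.\ $X_{925}$) on the circumcircle of its triangle. To prove that $E,F,G,H,I$ are concyclic, it suffices to check an inscribed-angle identity such as $\angle FEG = \angle FHG$ (or an equivalent cross-ratio / Ptolemy relation among $E,F,G,H$). I would then try to identify the governing circle explicitly: by the square's symmetry under the $90\degrees$ rotation $\rho$ about its center $O$, one naturally conjectures that the desired circle is the one through $E$ and its images under the cyclic labelling, and in particular that its center is determined by $E$ and $O$.

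The cleanest route to make this rigorous is via coordinates. Place $A=(0,0)$, $B=(1,0)$, $C=(1,1)$, $D=(0,1)$, $E=(x,y)$, and use the explicit barycentric formula $X_{99}=\bigl(1/(b^{2}-c^{2}):1/(c^{2}-a^{2}):1/(a^{2}-b^{2})\bigr)$ (with the analogous formula for $X_{925}$) to compute $F,G,H,I$ as rational functions of $x,y$. The symmetry of the square guarantees that $G,H,I$ are obtained from $F$ by the substitutions corresponding to $\rho$, which considerably reduces the bookkeeping. With the four points in hand, concyclicity of $E,F,G,H,I$ is established by checking that the $5\times 5$ concyclicity determinant
\[
\det\begin{pmatrix} u^{2}+v^{2} & u & v & 1 \end{pmatrix}_{(u,v)\in\{E,F,G,H,I\}}
\]
vanishes identically in $x,y$ — or, equivalently, that the center and radius of the circle through $E,F,G$ match those of the circle through $E,H,I$.

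The main obstacle is the sheer algebraic size of the expressions: the barycentric formulas for $X_{99}$ and $X_{925}$ introduce denominators quadratic in the side lengths of each radial triangle, which themselves involve square roots of quadratic expressions in $x,y$. Clearing denominators produces polynomial identities of high degree, and a purely synthetic argument would require a precise angular characterization of the Steiner point's position on the circumcircle that meshes with the right angles of the square — something I do not see arising immediately from the inscribed-angle approach. Accordingly, after setting up the angular reduction geometrically, I expect to finish by invoking symbolic computation (as the authors indicate for their Mathematica notebooks) to verify the determinantal identity, and I would handle the cases $n=99$ and $n=925$ in parallel using their respective barycentric formulas.
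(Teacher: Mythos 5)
Your proposal takes essentially the same route as the paper: the paper prints no proof of this theorem and, per its stated methodology, establishes it by exact symbolic computation in coordinates (Mathematica notebooks in the supplementary material), which is precisely the determinant verification you describe. Your preliminary observation that $X_{99}$ and $X_{925}$ lie on the circumcircle of the reference triangle is correct and consistent with the paper's Lemma~\ref{lemma:circumcircle}, and it does explain for free why $E$ is concyclic with each center and two vertices of the square, though (as you note) the concyclicity of all five points still requires the computation; one small bonus is that the barycentric coordinates of both centers are rational in the squared side lengths, so the square roots you anticipate never actually enter the calculation.
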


\begin{figure}[h!t]
\centering
\includegraphics[width=0.49\linewidth]{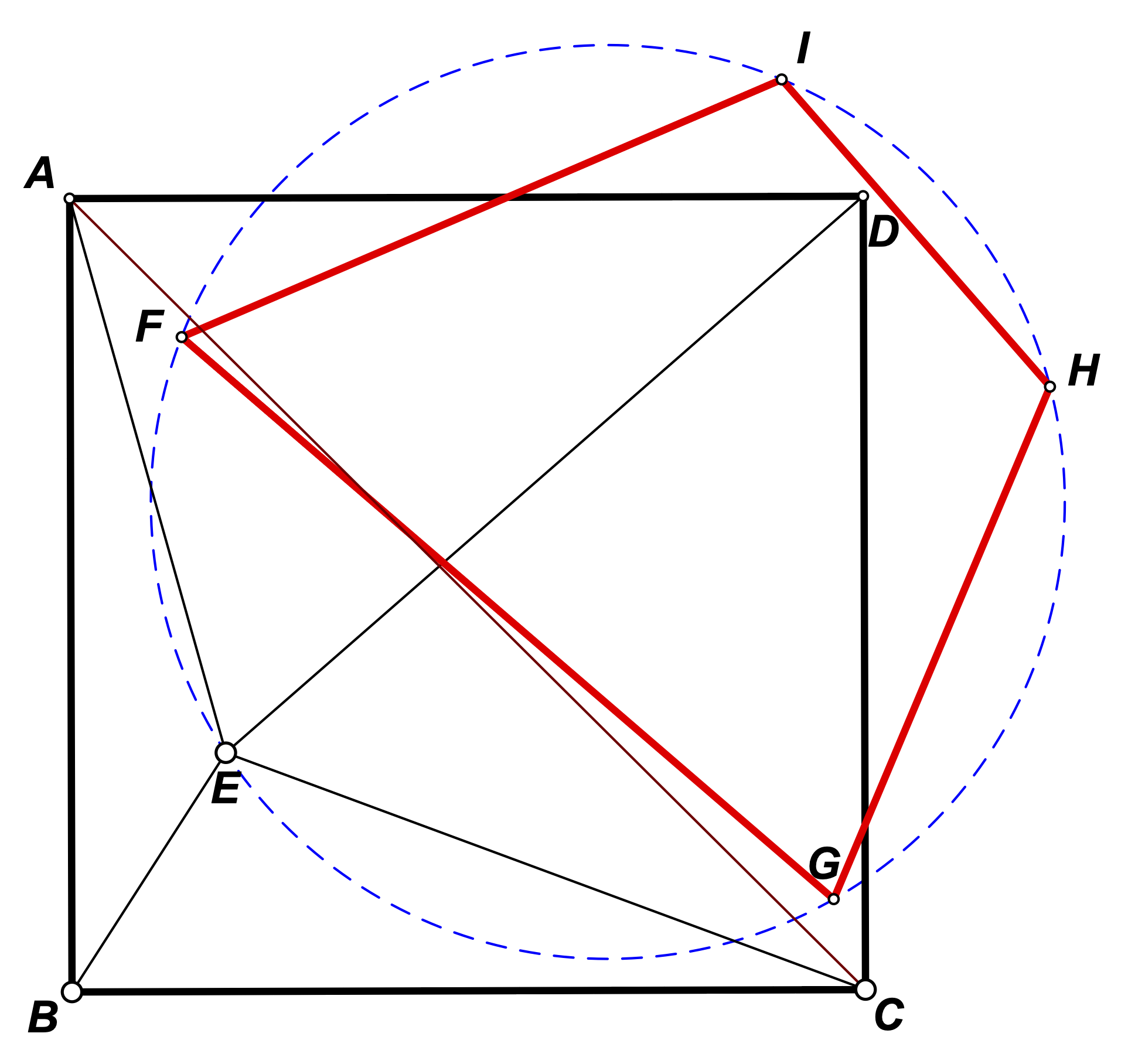}
\caption{square, $X_{99}$-points $\implies$ cyclic}
\label{fig:sqArbX99}
\end{figure}

\begin{theorem}
\label{thm:sqArbX372}
Let $E$ be an arbitrary point in the plane of square $ABCD$.
Let $n$ be 372 or 640.
Let $F$, $G$, $H$, and $I$ be the $X_{n}$-points of triangles $\triangle ABE$, $\triangle BCE$, $\triangle CDE$,
and $\triangle DAE$, respectively (Figure~\ref{fig:sqArbX372} shows the case when $n=372$).
Then $FGHI$ is an equidiagonal orthodiagonal quadrilateral.
The diagonals of $FGHI$ are parallel to the sides of $ABCD$.
\end{theorem}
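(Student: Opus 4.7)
Because $X_{372}$ and $X_{640}$ have no simple affine-linear description (unlike the centroid used in Theorem~\ref{thm:genArbX2}), I would follow the analytic route used throughout the supplementary Mathematica notebooks. Place the square in coordinates $A=(0,0)$, $B=(1,0)$, $C=(1,1)$, $D=(0,1)$ and write the radiator as $E=(x,y)$. Introduce the four distances $r_1=|AE|$, $r_2=|BE|$, $r_3=|CE|$, $r_4=|DE|$, constrained by $r_1^2=x^2+y^2$, $r_2^2=(1-x)^2+y^2$, $r_3^2=(1-x)^2+(1-y)^2$, and $r_4^2=x^2+(1-y)^2$. For each of the four radial triangles, the three side lengths are $1$ together with a pair of the $r_i$, so I can substitute them into the trilinear formula for $X_n$ recorded in \cite{ETC} and convert to Cartesian coordinates, obtaining closed-form expressions for the vertices $F$, $G$, $H$, $I$ as functions of $x,y,r_1,r_2,r_3,r_4$.

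The conclusion amounts to three algebraic identities: (i) $F_y=H_y$, so that $FH$ is horizontal and parallel to $AB$; (ii) $G_x=I_x$, so that $GI$ is vertical and parallel to $BC$ (and therefore $FH\perp GI$ automatically, since the sides of the square are perpendicular); and (iii) $(F_x-H_x)^2=(G_y-I_y)^2$, so that $|FH|=|GI|$. Each is a polynomial identity in $x,y,r_1,r_2,r_3,r_4$ to be verified modulo the four quadratic relations defining the $r_i$, a standard Gröbner basis exercise in Mathematica.

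A useful structural simplification comes from the $90^\circ$ rotation $\rho$ about the centre of the square, given by $\rho(u,v)=(1-v,u)$. This rotation sends $\triangle ABE$ to $\triangle BCE'$ with $E'=(1-y,x)$, and hence $\rho(F(E))$ equals the vertex $G(E')$ of the central quadrilateral associated to the new radiator $E'$; tracking the other three vertices similarly, a short bookkeeping argument shows that identity~(ii) applied to $E$ is equivalent to identity~(i) applied to $E'$, so it suffices to prove (i) and (iii) for all radiators. Moreover, $F$ depends only on $r_1,r_2$ while $H$ depends only on $r_3,r_4$, so identity~(i) has the form $\Phi(x,y,r_1,r_2)=\Psi(x,y,r_3,r_4)$; two successive squarings clear all four radicals and leave a polynomial identity in $x$ and $y$ alone.

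The principal obstacle is the sheer algebraic bulk of the trilinear expressions for $X_{372}$ and $X_{640}$ once they are expanded in the $r_i$: the intermediate polynomials have large degree and become unwieldy by hand. This is precisely why, as throughout the paper, the verification is best delegated to symbolic computation rather than attempted synthetically.
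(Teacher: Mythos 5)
Your overall strategy is the same one the paper itself uses for this theorem: there is no synthetic proof in the text, and the result is verified by exact symbolic computation (the paper defers to the supplementary Mathematica notebooks), so setting up coordinates for the unit square, expressing $F,G,H,I$ through the ETC center function, and verifying polynomial identities modulo the relations defining $r_1,\dots,r_4$ is exactly in the spirit of the paper's argument. Your use of the quarter-turn $\rho(u,v)=(1-v,u)$ to halve the number of identities is a nice addition, and it is correct that $\rho$ carries $\triangle ABE$ to $\triangle BCE'$ and hence permutes the vertices of the central quadrilateral of the rotated configuration.

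However, the three identities you propose to verify are not the right ones: you have the two diagonals swapped. Test the case $E=(\tfrac12,\tfrac12)$: triangles $\triangle ABE$ and $\triangle CDE$ are isosceles with apex $E$, so every triangle center of each lies on the common axis of symmetry $x=\tfrac12$; hence $F=(\tfrac12,f)$ and $H=(\tfrac12,1-f)$, and the diagonal $FH$ is \emph{vertical} (parallel to $BC$ and $AD$), not horizontal, while $GI$ is horizontal. Your identity (i), $F_y=H_y$, is therefore false in general (it would force $F=H$ in this symmetric case), and likewise (ii) and (iii) as written. The identities that the Gr\"obner-basis computation must confirm are $F_x=H_x$, $G_y=I_y$, and $(F_y-H_y)^2=(G_x-I_x)^2$. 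This is a bookkeeping error rather than a flaw in the method --- the computation would immediately reject the wrong identities and the fix is mechanical --- but as stated the proof plan verifies statements that do not hold.
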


\begin{figure}[h!t]
\centering
\includegraphics[width=0.4\linewidth]{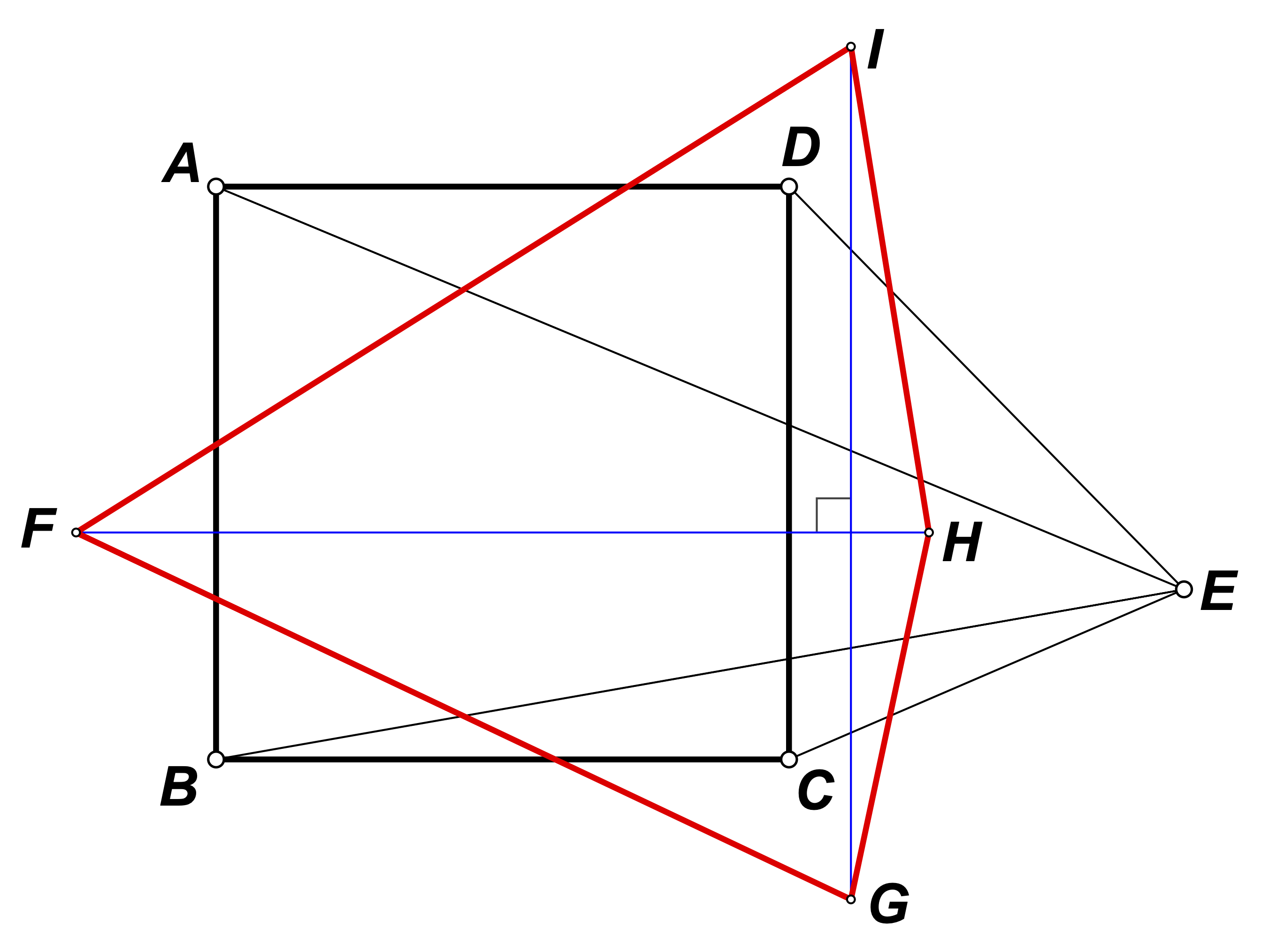}
\caption{square, $X_{372}$-points $\implies$ equi-ortho}
\label{fig:sqArbX372}
\end{figure}


\begin{theorem}
\label{thm:sqArbX373}
Let $E$ be an arbitrary point in the plane of square $ABCD$.
Let $F$, $G$, $H$, and $I$ be the $X_{373}$-points of triangles $\triangle ABE$, $\triangle BCE$, $\triangle CDE$,
and $\triangle DAE$, respectively (Figure~\ref{fig:sqArbX373}).
Then $FGHI$ is an equidiagonal orthodiagonal quadrilateral.
(Note: The diagonals of $FGHI$ are not necessarily parallel to the sides of $ABCD$.)
\end{theorem}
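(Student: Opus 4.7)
The plan is to proceed by direct coordinate computation, since $X_{373}$ lacks constant Shinagawa coefficients (so Theorem~\ref{thm:rectArbXShinagawa} does not apply) and admits no short synthetic description analogous to the midpoint argument used in Lemma~\ref{lemma:genArbX2Lemma}. Place the square with center at the origin, taking $A=(-1,-1)$, $B=(1,-1)$, $C=(1,1)$, $D=(-1,1)$, and let the radiator be $E=(u,v)$. For each of the four radial triangles I would compute the three squared side lengths as polynomials in $u,v$, substitute them into the barycentric formula for $X_{373}$ taken from the Encyclopedia of Triangle Centers \cite{ETC}, and convert to Cartesian coordinates. This produces explicit rational expressions for $F$, $G$, $H$, and $I$ as functions of $u$ and $v$.

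The required conclusions then reduce to two algebraic identities. Writing the diagonal vectors $\vec{d}_1 = H-F$ and $\vec{d}_2 = I-G$, I must verify
\[
\vec{d}_1\cdot\vec{d}_2 = 0 \qquad\text{and}\qquad |\vec{d}_1|^2 = |\vec{d}_2|^2.
\]
After clearing the common denominators introduced by the barycentric normalization, each becomes a polynomial identity in $u$ and $v$ that can be certified by symbolic expansion, following the Mathematica-based methodology described in Section~\ref{section:methodology}.

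A structural shortcut exploits the $90\degrees$ rotational symmetry of the square. Let $R$ denote counterclockwise rotation by $90\degrees$ about the origin, so that $R(A)=B$, $R(B)=C$, $R(C)=D$, and $R(D)=A$. Emphasizing the dependence on the radiator, write $F(E)$, $G(E)$, $H(E)$, $I(E)$ for the four central-quadrilateral vertices. Since $R$ maps $\triangle ABE\to\triangle BCE'$, $\triangle BCE\to\triangle CDE'$, $\triangle CDE\to\triangle DAE'$, and $\triangle DAE\to\triangle ABE'$, with $E'=R(E)$, we obtain
\[
R(F(E))=G(E'),\quad R(G(E))=H(E'),\quad R(H(E))=I(E'),\quad R(I(E))=F(E').
\]
Consequently $\vec{d}_2(E) = R\bigl(\vec{d}_1(R^{-1}E)\bigr)$, so only $F$ and $H$ need to be computed explicitly. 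The equidiagonal condition then reads $|\vec{d}_1(E)|^2 = |\vec{d}_1(R^{-1}E)|^2$, i.e., the squared length of $\vec{d}_1$ is invariant under a $90\degrees$ rotation of the radiator, and the orthodiagonal condition becomes $\vec{d}_1(E)\cdot R\bigl(\vec{d}_1(R^{-1}E)\bigr)=0$.

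The main obstacle is the algebraic bulk: the barycentric entries for $X_{373}$ are polynomials of moderately high degree in the squared side lengths, and each squared side length of a radial triangle is itself quadratic in $u,v$, so the raw expressions for $F(E)$ and $H(E)$ are large. However, the British Flag identity $|EA|^2+|EC|^2 = |EB|^2+|ED|^2$ together with the $90\degrees$ symmetry above force substantial cancellation, and the two identities can then be verified exactly in a Mathematica notebook of the same type already used to support other theorems in this paper.
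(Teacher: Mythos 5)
Your plan matches the paper's own treatment: the paper gives no in-text proof of this theorem and, per Section~\ref{section:methodology}, relies on exact symbolic computation of the $X_{373}$-points in barycentric coordinates verified in the supplementary Mathematica notebooks, which is precisely the computation you describe. Your $90\degrees$-rotation equivariance observation is correct and a useful economy (it does give $\vec{d}_2(E)=R\bigl(\vec{d}_1(R^{-1}E)\bigr)$, so only $F$ and $H$ need computing), but the two residual polynomial identities in $u,v$ still require the same symbolic verification, so the approach is essentially identical to the paper's.
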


\begin{figure}[h!t]
\centering
\includegraphics[width=0.35\linewidth]{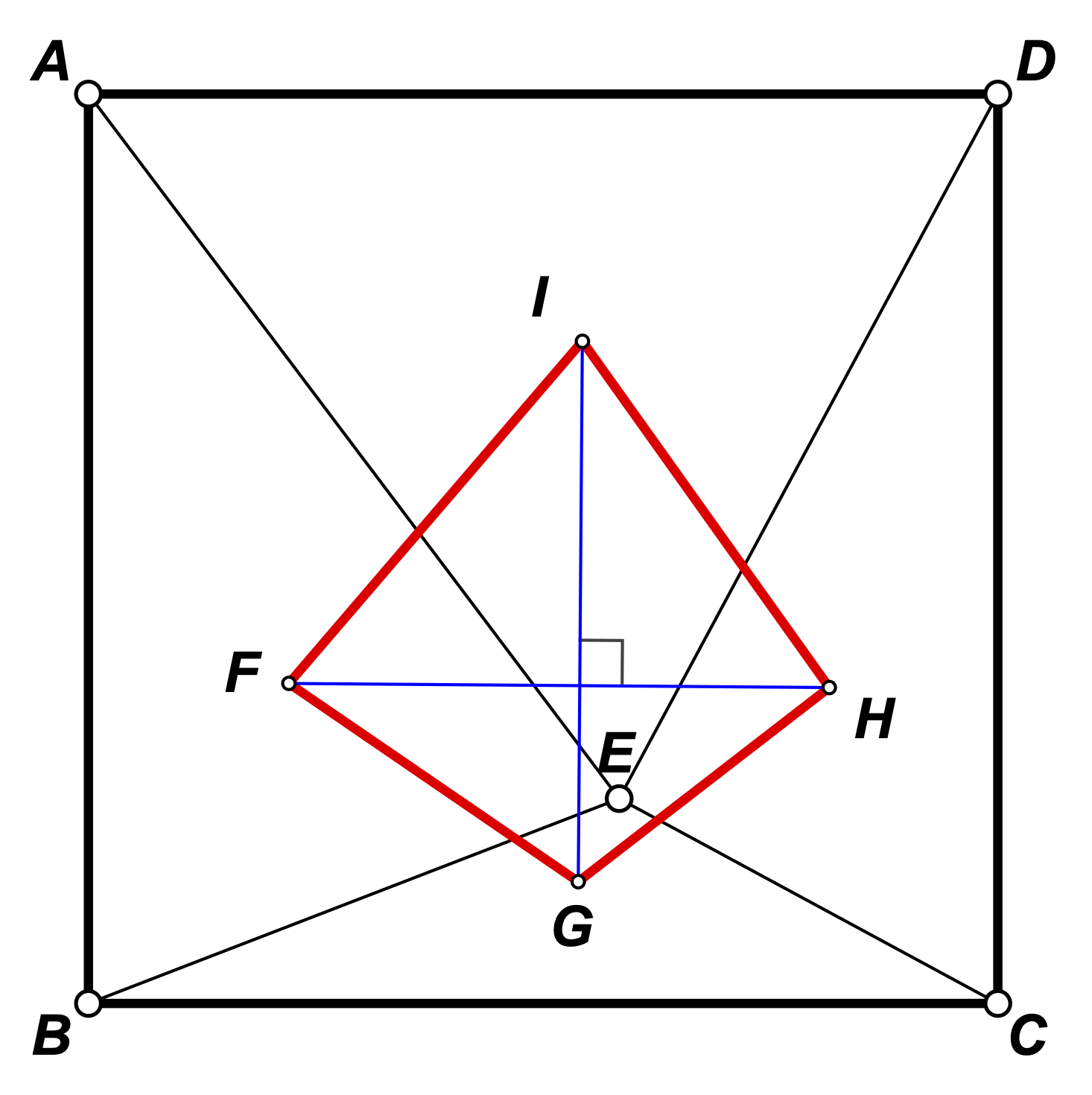}
\caption{square, $X_{373}$-points $\implies$ equi-ortho}
\label{fig:sqArbX373}
\end{figure}


\section{Point $E$ Restricted to a Line}
\label{section:misc}

In the previous section, point $E$ could be any point in the plane.
If point $E$ is restricted to be located on certain lines associated with the quadrilateral,
then some interesting results are obtained. They are shown in the following two tables.

\bigskip
\begin{center}
\begin{tabular}{|l|p{3.3in}|}
\hline
\multicolumn{2}{|c|}{\textbf{\color{blue}\large \strut Central Quadrilaterals of Kites}}\\ \hline
\textbf{Shape of central quad}&\textbf{centers}\\ \hline
\ru isosceles trapezoid&all\\ \hline
\end{tabular}
\end{center}

\newpage

\begin{theorem}
\label{thm:kiCen}
Let $E$ be any point on diagonal $AC$ of kite $ABCD$ (with $AB=AD$).
Let $X$ be any triangle center.
Let $F$, $G$, $H$, and $I$ be the $X$-points of triangles $\triangle ABE$, $\triangle BCE$, $\triangle CDE$,
and $\triangle DAE$, respectively (Figure~\ref{fig:kiCen}).
Then $FGHI$ is an isosceles trapezoid.
\end{theorem}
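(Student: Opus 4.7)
The plan is to exploit the reflection symmetry of the configuration about the diagonal $AC$. Since $AB = AD$ and $CB = CD$ (the standard kite conditions follow from $AB = AD$ together with $BC = DC$ as part of kite definition), the kite $ABCD$ is invariant under reflection $\sigma$ across the line $AC$, with $\sigma(A)=A$, $\sigma(C)=C$, $\sigma(B)=D$, $\sigma(D)=B$. Because $E$ is chosen to lie on $AC$, we also have $\sigma(E)=E$, so the entire radial configuration is $\sigma$-invariant.

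Under $\sigma$, the radial triangles get permuted in pairs: $\triangle ABE \leftrightarrow \triangle ADE$ and $\triangle BCE \leftrightarrow \triangle DCE$. Now for any triangle center $X$ in the sense of Kimberling, the trilinear (equivalently barycentric) coordinates depend only on the unordered multiset of side lengths of the triangle, so $X$-points commute with isometries: the image of the $X$-point of a triangle $T$ under any isometry $\varphi$ is the $X$-point of $\varphi(T)$. Applying this to $\sigma$, we conclude $\sigma(F) = I$ and $\sigma(G) = H$, since $\triangle ADE$ and $\triangle DAE$ determine the same set of vertices and hence the same triangle center.

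From $\sigma(F)=I$ and $\sigma(G)=H$, the line $AC$ is the perpendicular bisector of segments $FI$ and $GH$. In particular the two segments $FI$ and $GH$ are both perpendicular to $AC$, hence parallel to each other; and $\sigma$ maps side $FG$ onto side $IH$, so $|FG|=|IH|$. Thus $FGHI$ has one pair of parallel sides $FI \parallel GH$ and the other two sides equal, which is exactly the definition of an isosceles trapezoid (equivalently, it admits the axis of symmetry $AC$ perpendicular to its pair of parallel sides).

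I expect no real obstacle beyond a small bookkeeping remark: one should note that $F \neq I$ and $G \neq H$ generically so that the quadrilateral does not degenerate (if $F = I$ then $F$ lies on $AC$, and similarly for $G$), and that under the inclusive convention a rectangle is itself an isosceles trapezoid, which covers the borderline case $|FI| = |GH|$. The nontrivial input is the fact that Kimberling's definition of a triangle center forces equivariance under isometries, but this is a routine consequence of the bisymmetry axiom on the center function.
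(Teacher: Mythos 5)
Your proof is correct and follows essentially the same route as the paper's: both arguments use the reflection across line $AC$ (equivalently $AE$) to show $FI\perp AC$ and $GH\perp AC$, hence $FI\parallel GH$, and to get $IH=FG$ from $\sigma$ mapping $FG$ to $IH$. Your explicit remarks on the isometry-equivariance of Kimberling centers and on nondegeneracy are left implicit in the paper but add nothing structurally different.
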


\begin{figure}[h!t]
\centering
\includegraphics[width=0.35\linewidth]{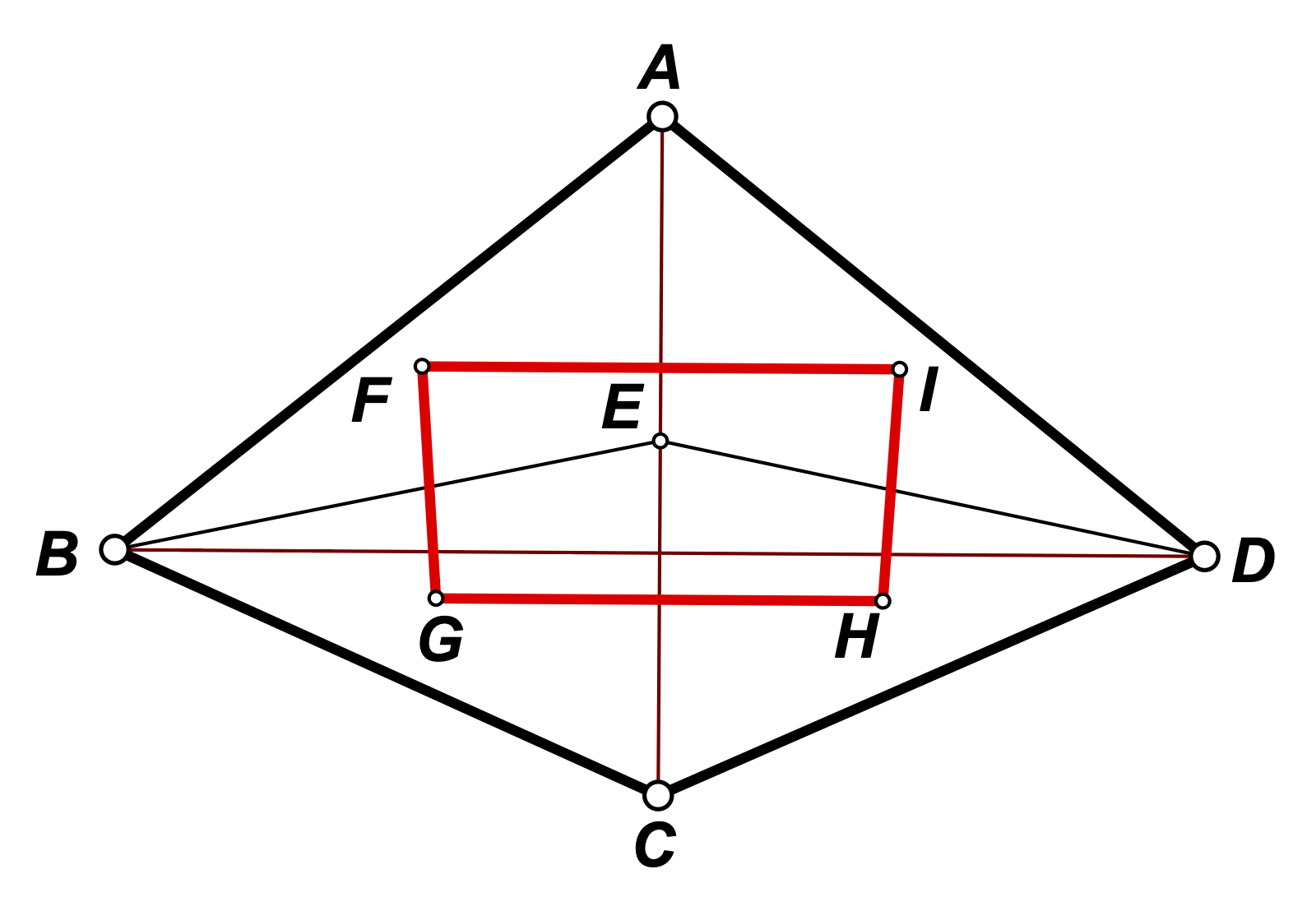}
\caption{kite, $X$-points $\implies$ isosceles trapezoid}
\label{fig:kiCen}
\end{figure}

\begin{proof}
Since $\triangle AED$ is the reflection of $\triangle AEB$ about $AE$, 
then $I$ is the reflection of $F$ about $AE$. 
In particular, $FI\perp AE$. Similarly, $GH\perp AE$. 
Therefore $FI\parallel GH$, so $FGHI$ is a trapezoid.
Furthermore, $IH$ is the reflection of $FG$ about $AC$, so $IH=FG$.
Hence, $FGHI$ is an isosceles trapezoid.
\end{proof}

\smallskip
\begin{center}
\begin{tabular}{|l|p{3.3in}|}
\hline
\multicolumn{2}{|c|}{\textbf{\color{blue}\large \strut Central Quadrilaterals of Isosceles Trapezoids}}\\ \hline
\textbf{Shape of central quad}&\textbf{centers}\\ \hline
\ru kite&all\\ \hline
\end{tabular}
\end{center}

\begin{theorem}
\label{thm:itCen}
Let $E$ be any point on the perpendicular bisector of side $BC$
of isosceles trapezoid $ABCD$ (with $AD\parallel BC$ and $AB=CD$).
Let $X$ be any triangle center.
Let $F$, $G$, $H$, and $I$ be the $X$-points of triangles $\triangle ABE$, $\triangle BCE$, $\triangle CDE$,
and $\triangle DAE$, respectively (Figure~\ref{fig:itCen}).
Then $FGHI$ is a kite.
\end{theorem}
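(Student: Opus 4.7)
The plan is to exploit the symmetry of the configuration across the axis of the isosceles trapezoid. The perpendicular bisector of $BC$, call it $\ell$, is simultaneously the perpendicular bisector of $AD$ (because $AB=CD$ and $AD\parallel BC$), so $\ell$ is the axis of symmetry of the trapezoid itself. Since $E$ is assumed to lie on $\ell$, the reflection $\sigma$ in $\ell$ fixes $E$ while interchanging $B\leftrightarrow C$ and $A\leftrightarrow D$.

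Next I would trace through the four radial triangles under $\sigma$. We have $\sigma(\triangle ABE)=\triangle DCE=\triangle CDE$, and $\sigma(\triangle CDE)=\triangle ABE$; on the other hand $\triangle BCE$ and $\triangle DAE$ are each mapped to themselves (with their two non-$E$ vertices swapped). Because $\sigma$ is an isometry and a triangle center is an isometry-invariant point determined by the unordered set of vertices (its barycentrics are symmetric functions of the side lengths), applying $\sigma$ to the $X$-point of a triangle gives the $X$-point of the reflected triangle. Hence $\sigma(F)=H$, while $\sigma(G)=G$ and $\sigma(I)=I$.

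From these observations the kite structure falls out immediately: $G$ and $I$ lie on $\ell$, and $F$, $H$ are mirror images across $\ell$. Consequently $GF=GH$ and $IF=IH$, which are the two pairs of equal adjacent sides characterizing a kite with axis $GI$. I would also note in passing that this axis $GI$ coincides with $\ell$, so the kite is oriented along the axis of symmetry of the trapezoid, matching what the figure shows.

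The only delicate point, and thus the main obstacle worth spelling out carefully, is the invariance of Kimberling triangle centers under the orientation-reversing isometry $\sigma$. I would cite the definition of triangle center (as in the methodology section and \cite{KimberlingA}): since a center is given by barycentric coordinates that are symmetric functions of $(a,b,c)$, the center depends only on the congruence class of the triangle, not on its labeling or orientation. With this invariance established, the identifications $\sigma(F)=H$, $\sigma(G)=G$, $\sigma(I)=I$ are automatic and the proof is complete. This argument is structurally parallel to the proof of Theorem~\ref{thm:kiCen}, with the role of the kite's symmetry axis $AC$ now played by the trapezoid's symmetry axis $\ell$.
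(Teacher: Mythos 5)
Your proposal is correct and follows essentially the same route as the paper's proof: reflect in the perpendicular bisector of $BC$, observe that $F$ and $H$ are swapped while $G$ and $I$ lie on the axis (the paper phrases this via the isosceles triangles $\triangle BEC$ and $\triangle AED$), and conclude the kite property. The only cosmetic difference is that you finish with the two pairs of equal adjacent sides, while the paper finishes by noting $IG$ is the perpendicular bisector of $FH$; these are equivalent.
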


\begin{figure}[h!t]
\centering
\includegraphics[width=0.35\linewidth]{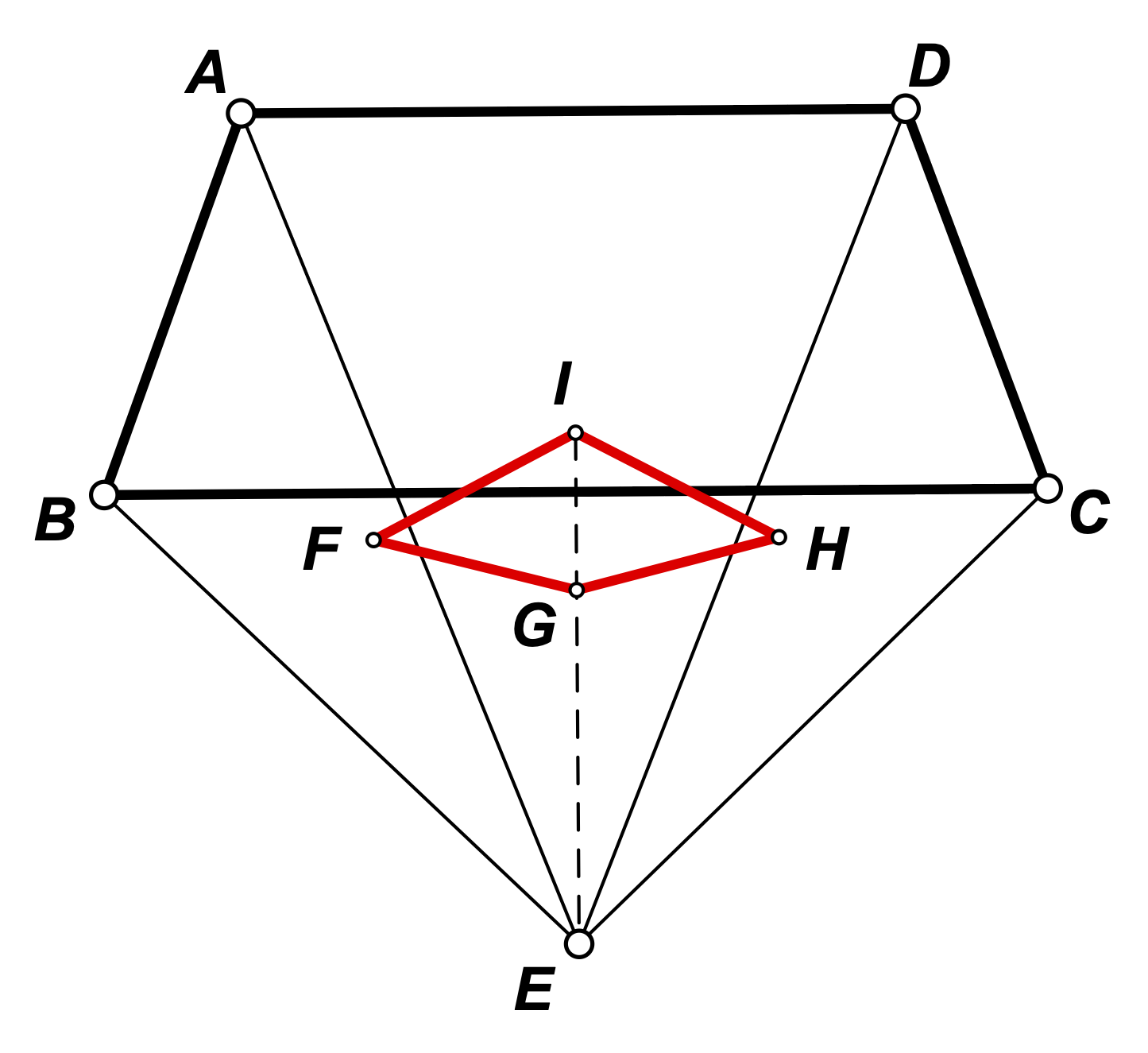}
\caption{isosceles trapezoid, $X$-points $\implies$ kite}
\label{fig:itCen}
\end{figure}

\begin{proof}
Let $m$ be the perpendicular bisector of $BC$.
Since $\triangle DCE$ is the reflection of $\triangle ABE$ about $m$,
then $H$ is the reflection of $F$ about $m$. 
Hence $FH\perp m$. 
Since $\triangle BEC$ is isosceles, $G$ lies on $m$.
Since $\triangle AED$ is isosceles, $I$ lies on $m$.
Therefore, $IG$ coincides with $m$, and hence $IG\perp FH$.
Moreover, $IG$ bisects $FH$ because $H$ is the reflection of $F$ about $m$.
Therefore, $FGHI$ is a kite.
\end{proof}


\newpage

\section{Results Using the Vertex Centroid}
\label{section:vertexCentroid}

A \textit{bimedian} of a quadrilateral is the line segment
joining the midpoints of two opposite sides.

The \textit{centroid} (or vertex centroid) of a quadrilateral is the point of intersection
of the bimedians (Figure~\ref{fig:gpCentroid}). The centroid bisects each bimedian.

\begin{figure}[h!t]
\centering
\includegraphics[width=0.4\linewidth]{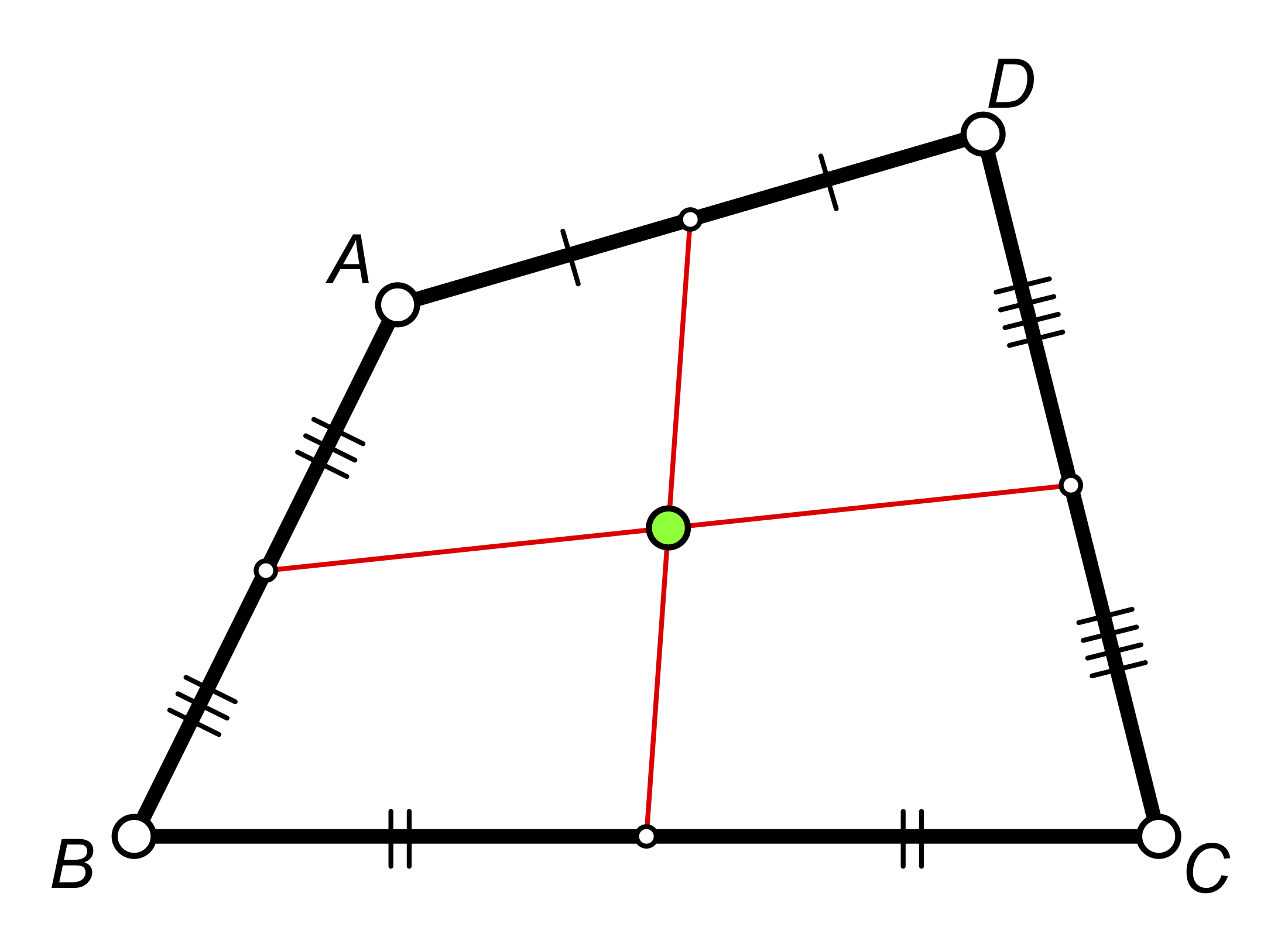}
\caption{Centroid of a quadrilateral}
\label{fig:gpCentroid}
\end{figure}

In this section, we study the case where point $E$ is the centroid of the quadrilateral.
Results that are true when point $E$ is arbitrary are omitted.

Our computer study found a unique shape for the central quadrilateral of an equidiagonal
quadrilateral. The result is shown in the following table.

\bigskip
\begin{center}
\begin{tabular}{|l|p{3.3in}|}
\hline
\multicolumn{2}{|c|}{\textbf{\color{blue}\large \strut Central Quadrilaterals of Equidiagonal Quads}}\\ \hline
\textbf{Shape of central quad}&\textbf{centers}\\ \hline
\ru orthodiagonal&591\\ \hline
\end{tabular}
\end{center}

\begin{theorem}
\label{thm:equiCenX591}
Let $E$ be the vertex centroid of equidiagonal quadrilateral $ABCD$.
Let $F$, $G$, $H$, and $I$ be the $X_{591}$-points of triangles $\triangle ABE$, $\triangle BCE$, $\triangle CDE$,
and $\triangle DAE$, respectively (Figure~\ref{fig:equiCenX591}).
Then $FGHI$ is an orthodiagonal quadrilateral.
\end{theorem}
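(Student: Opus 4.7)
The plan is to prove Theorem~\ref{thm:equiCenX591} by a barycentric-coordinate computation, in the same spirit as the other proofs in the paper that use an obscure Kimberling center lacking a clean synthetic description. Because $X_{591}$ has no convenient reflection-type or midsegment interpretation (the way the centroid did in Lemma~\ref{lemma:genArbX2Lemma}), I do not expect a short geometric proof; the result will come from a symbolic verification that the equidiagonal condition $|AC|=|BD|$ is precisely what forces $FH\perp GI$.

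First I would set up a Cartesian system adapted to the diagonal structure of $ABCD$. Place the intersection of the diagonals at the origin, let $\mathbf{u}$ and $\mathbf{v}$ be unit vectors along $AC$ and $BD$, and write
\[
A=-\alpha\mathbf{u},\quad C=\gamma\mathbf{u},\quad B=-\beta\mathbf{v},\quad D=\delta\mathbf{v},
\]
with $\alpha,\beta,\gamma,\delta>0$ and the equidiagonal constraint $\alpha+\gamma=\beta+\delta$. The vertex centroid is then $E=\tfrac14\bigl((\gamma-\alpha)\mathbf{u}+(\delta-\beta)\mathbf{v}\bigr)$. This parametrization carries the six degrees of freedom of a generic equidiagonal quadrilateral (four lengths minus one constraint, plus the angle between $\mathbf{u}$ and $\mathbf{v}$).

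Next, for each of the four radial triangles $\triangle ABE$, $\triangle BCE$, $\triangle CDE$, $\triangle DAE$, I would compute the three side-lengths in closed form in terms of $\alpha,\beta,\gamma,\delta,\theta$ (where $\theta$ is the angle between the diagonals), substitute those side-lengths into the barycentric formula for $X_{591}$ from \cite{ETC}, and convert back to Cartesian coordinates to obtain $F,G,H,I$. From these I would form the diagonal vectors $\mathbf{d}_1=H-F$ and $\mathbf{d}_2=I-G$ and compute the scalar $\mathbf{d}_1\cdot\mathbf{d}_2$, expressed as a rational (or near-rational, after clearing denominators) function of the parameters. The goal is to show that after imposing $\alpha+\gamma=\beta+\delta$, this scalar vanishes identically.

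The main obstacle is algebraic bulk rather than ideas. The barycentric weights of $X_{591}$ involve trigonometric expressions in the angles of each radial triangle, which after translation to side-lengths become nested radical expressions; the four copies attached to the four radial triangles interact in a messy way. I would therefore carry out the simplification in Mathematica using \texttt{FullSimplify} (or better, reduce the target expression modulo the polynomial $\alpha+\gamma-\beta-\delta$ using \texttt{PolynomialReduce} or a Gr\"obner basis), exactly as the paper does for its other computer-assisted results; the notebook would then be included as supplementary material. A final sanity check is that at $\alpha=\gamma$ and $\beta=\delta$ the configuration acquires central symmetry through $E=O$, in which case the orthodiagonality reduces to an easy symmetry statement, giving an independent verification of the computer algebra output on a one-parameter subfamily.
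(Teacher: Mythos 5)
Your approach matches the paper's: Theorem~\ref{thm:equiCenX591} is not proved synthetically in the text but is deferred to an exact symbolic computation in the supplementary Mathematica notebooks, which is precisely the coordinate/computer-algebra verification you outline (the diagonal-adapted Cartesian parametrization is complete, and reducing $\mathbf{d}_1\cdot\mathbf{d}_2$ modulo $\alpha+\gamma-\beta-\delta$ is the right target). Two trivial slips worth fixing: your parametrization has four, not six, degrees of freedom up to similarity; and in the $\alpha=\gamma$, $\beta=\delta$ sanity check (a rectangle with $E$ at its center) it is the axial symmetry, not the central symmetry, that forces $FH\perp GI$ --- central symmetry alone only makes $FGHI$ a parallelogram.
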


\begin{figure}[h!t]
\centering
\includegraphics[width=0.5\linewidth]{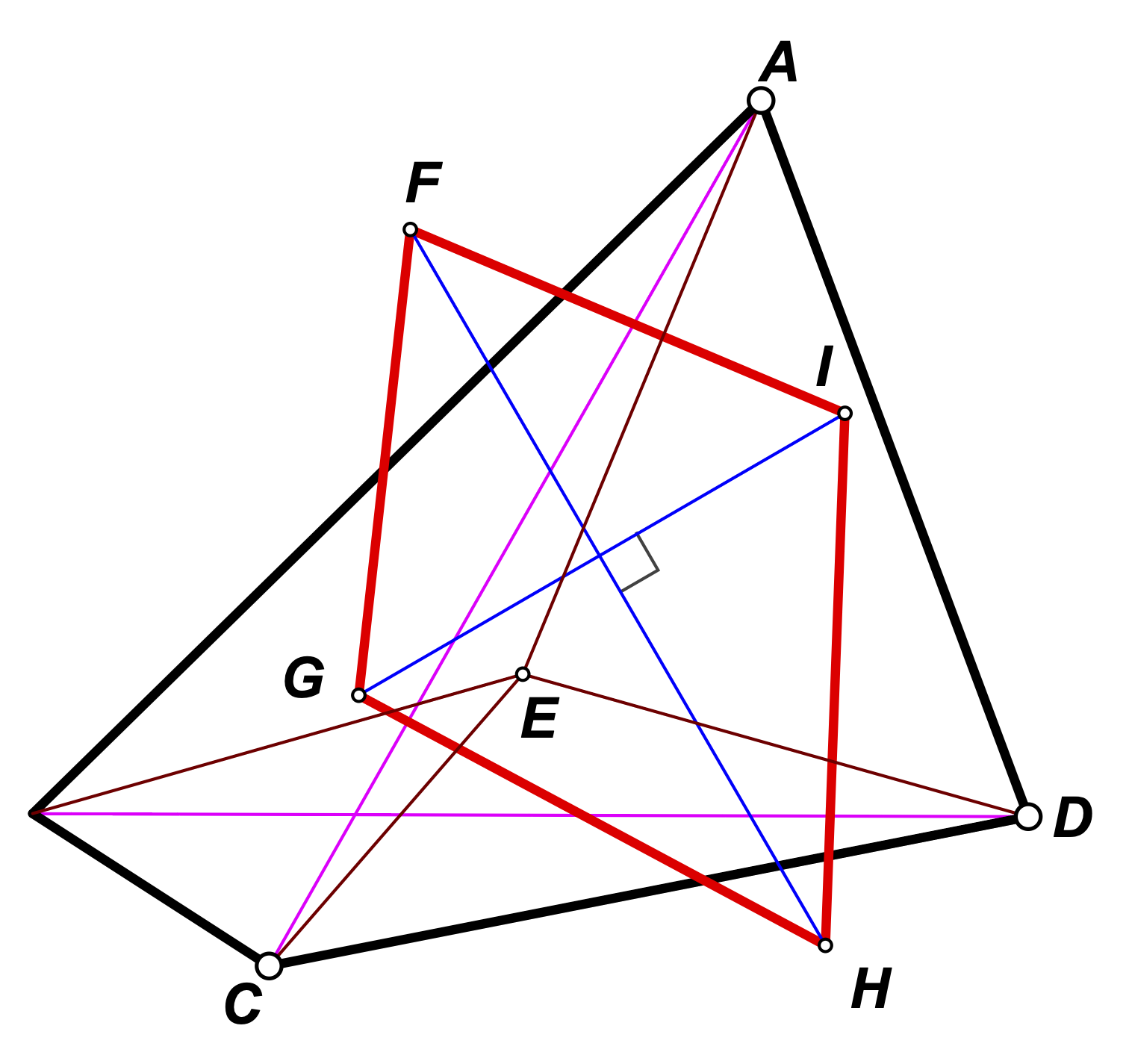}
\caption{equidiagonal quadrilateral, $X_{591}$-points $\implies$ orthodiagonal}
\label{fig:equiCenX591}
\end{figure}

\newpage

Our computer study found additional results if quadrilateral $ABCD$ is also
orthodiagonal. The results are shown in the following table.

\bigskip
\begin{center}
\begin{tabular}{|l|p{3.3in}|}
\hline
\multicolumn{2}{|c|}{\textbf{\color{blue}\large \strut Central Quads of Equidiagonal Orthodiagonal Quads}}\\ \hline
\textbf{Shape of central quad}&\textbf{centers}\\ \hline
\ru parallelogram&491, 615\\ \hline
\end{tabular}
\end{center}

\begin{theorem}
\label{thm:eoCenX491}
Let $E$ be the vertex centroid of equidiagonal orthodiagonal quadrilateral $ABCD$.
Let $n$ be 491 or 615.
Let $F$, $G$, $H$, and $I$ be the $X_{n}$-points of triangles $\triangle ABE$, $\triangle BCE$, $\triangle CDE$,
and $\triangle DAE$, respectively.
Then $FGHI$ is a parallelogram.
(Figure~\ref{fig:eoCenX491} shows the case when $n=491$.)
\end{theorem}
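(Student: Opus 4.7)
The plan is to reduce the parallelogram claim to the single vector identity $F+H=G+I$, which expresses that the midpoints of the diagonals $FH$ and $GI$ coincide, and to verify this identity by direct symbolic computation, exploiting the hypothesis that the diagonals of $ABCD$ are both perpendicular and of equal length.

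First, I would choose coordinates adapted to the hypotheses: place the intersection of the diagonals at the origin with $AC$ along the $x$-axis and $BD$ along the $y$-axis, writing $A=(\alpha,0)$, $C=(-\gamma,0)$, $B=(0,\beta)$, $D=(0,-\delta)$ with $\alpha,\beta,\gamma,\delta>0$. Orthodiagonality is then automatic, and the equidiagonal hypothesis $|AC|=|BD|$ is the single linear constraint $\alpha+\gamma=\beta+\delta$. The vertex centroid is
\[
E=\tfrac{1}{4}(\alpha-\gamma,\ \beta-\delta).
\]
The side lengths of each radial triangle are then (after squaring) explicit polynomials in $\alpha,\beta,\gamma,\delta$.

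Next, I would use the ETC barycentric coordinates of $X_{491}$ and $X_{615}$: writing the first barycentric coordinate as $f(a,b,c)$, the Cartesian position of the $X_{n}$-point of a triangle $P_1P_2P_3$ with opposite sides $a_1,a_2,a_3$ is
\[
\frac{f(a_1,a_2,a_3)\,P_1+f(a_2,a_3,a_1)\,P_2+f(a_3,a_1,a_2)\,P_3}{f(a_1,a_2,a_3)+f(a_2,a_3,a_1)+f(a_3,a_1,a_2)}.
\]
Applying this to $\triangle ABE,\triangle BCE,\triangle CDE,\triangle DAE$ produces formulas for $F,G,H,I$ as rational functions of $\alpha,\beta,\gamma,\delta$. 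The theorem is the polynomial identity $F+H-G-I\equiv 0$ modulo $\alpha+\gamma-\beta-\delta$; I would verify it in Mathematica either by clearing denominators and reducing modulo $\alpha+\gamma-\beta-\delta$ (Gr\"obner basis or polynomial division), or by parametrizing the constraint as $\delta=\alpha+\gamma-\beta$ and invoking \texttt{Simplify}.

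The main obstacle is not conceptual but computational: $X_{491}$ and $X_{615}$ have rather intricate barycentric formulas, so the raw expressions for the components of $F+H-G-I$ are large rational functions in four variables, and the vanishing is genuinely dependent on the equidiagonal condition (the orthodiagonal part is already built into the coordinates). I expect no synthetic shortcut because these two centers lack the elementary median/reflection descriptions that drove the $X_{2}$ proofs in Section~4; accordingly, the work will live in the supplementary Mathematica notebook, with the geometric set-up above serving to make the verification as small as possible.
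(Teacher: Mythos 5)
Your proposal is correct and matches the paper's treatment: the paper gives no synthetic proof of this theorem and defers it to the supplementary Mathematica notebooks, where (as described in Section~3) such results are verified by exact symbolic computation with barycentric coordinates, which is precisely your plan. Your reduction of the parallelogram condition to $F+H=G+I$ and your choice of axis-aligned diagonal coordinates with the single constraint $\alpha+\gamma=\beta+\delta$ are a sound and efficient way to organize that computation.
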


\begin{figure}[h!t]
\centering
\includegraphics[width=0.35\linewidth]{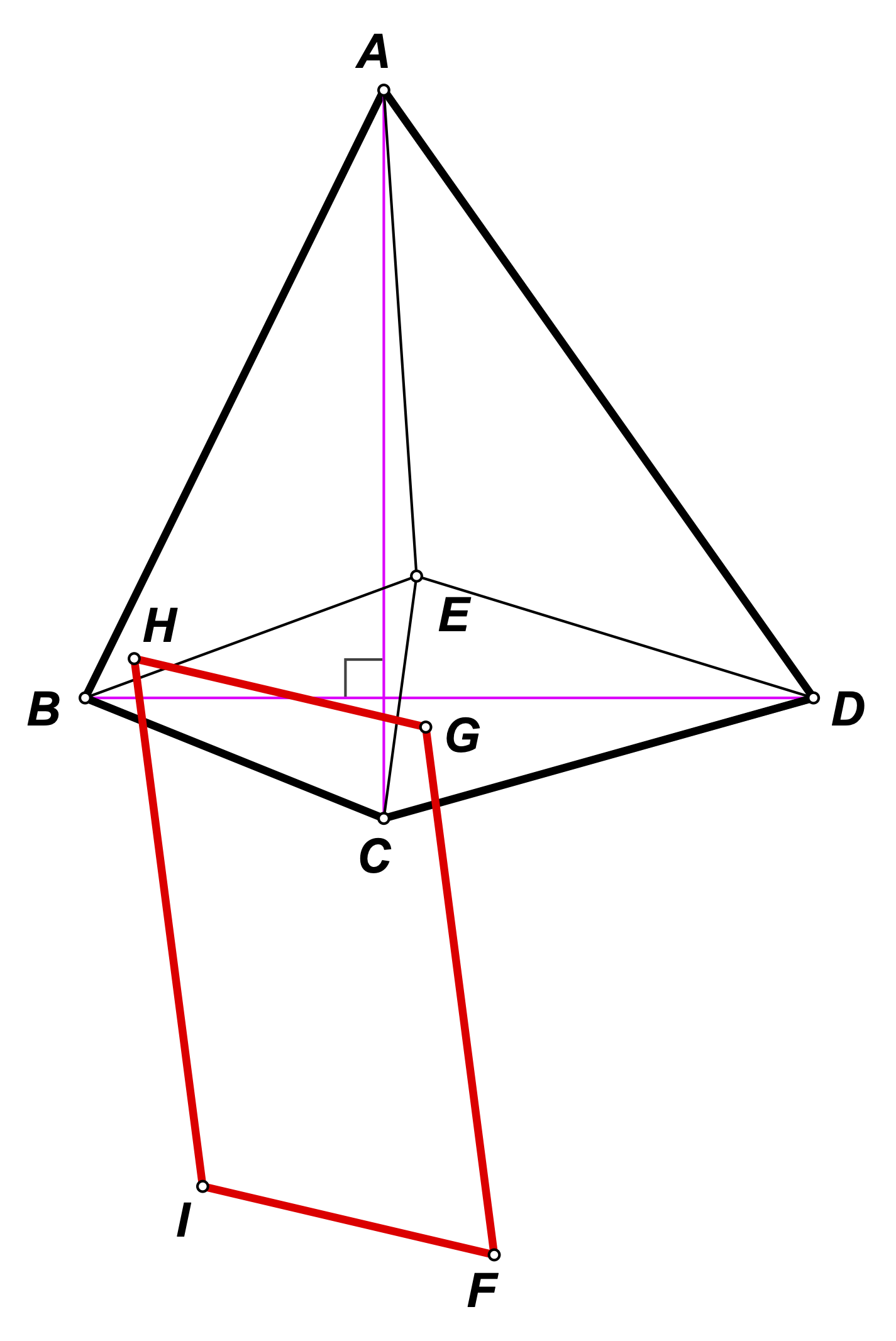}
\caption{equidiagonal orthodiagonal quad, $X_{491}$-points $\implies$ parallelogram}
\label{fig:eoCenX491}
\end{figure}


\newpage

\section{Results Using the Steiner Point}
\label{section:SteinerPoint}

A \emph{midray circle} of a quadrilateral
is the circle through the midpoints of the line segments joining one vertex
of the quadrilateral to the other vertices.

\void{
\begin{figure}[h!t]
\centering
\includegraphics[width=0.35\linewidth]{spMidrayCircle.png}
\caption{Midray circle of quadrilateral $ABCD$ relative to vertex $B$}
\label{fig:spMidrayCircle}
\end{figure}
}

The \emph{Steiner point} (sometimes called the Gergonne-Steiner point) of a quadrilateral
is the common point of the midray circles of the quadrilateral.

\begin{figure}[h!t]
\centering
\includegraphics[width=0.4\linewidth]{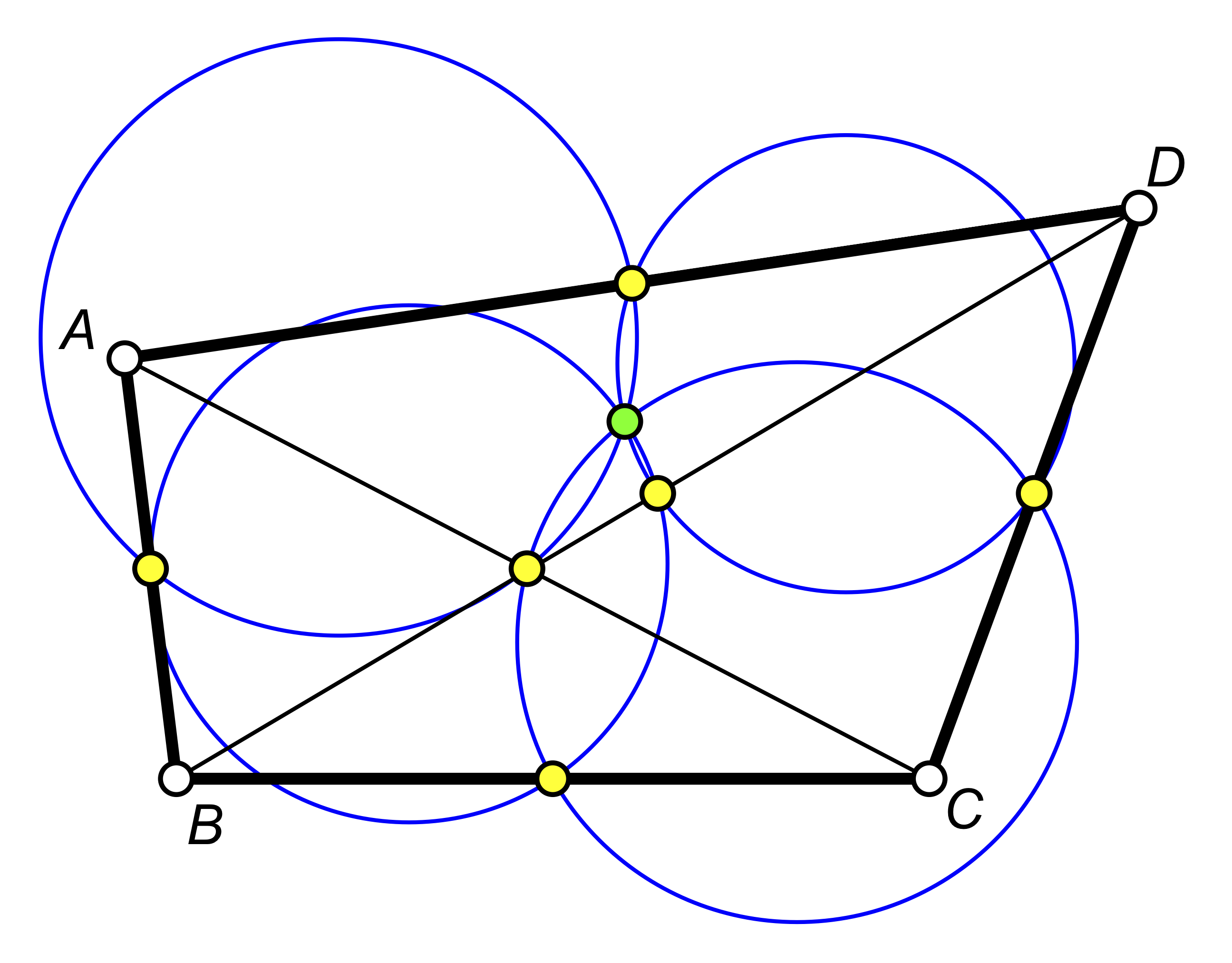}
\caption{Steiner point of quadrilateral $ABCD$}
\label{fig:spSteinerPoint}
\end{figure}

Figure \ref{fig:spSteinerPoint} shows the Steiner point of quadrilateral $ABCD$.
The yellow points represent the midpoints of the sides and diagonals of the quadrilateral.
The blue circles are the midray circles.
The common point of the four circles is the Steiner point (shown in green).

In this section, we study the case where point $E$ is the Steiner point of the quadrilateral.
Results that are true when point $E$ is arbitrary are omitted.
Before stating the results, we give some lemmas and definitions.

\begin{proposition}
\label{prop:circumcenter}
The Steiner point of a cyclic quadrilateral is the circumcenter of the quadrilateral.
\end{proposition}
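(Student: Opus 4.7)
The plan is to exhibit the circumcenter $O$ explicitly on each of the four midray circles, using a simple homothety argument, and then to invoke the definition of the Steiner point as the common point of these four circles.

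First I would fix notation: let $ABCD$ be cyclic with circumcircle $\Omega$, circumcenter $O$, and circumradius $R$. Consider the homothety $h_A$ with center $A$ and ratio $\frac{1}{2}$. Under $h_A$, the vertices $B$, $C$, $D$ map to the midpoints $M_{AB}$, $M_{AC}$, $M_{AD}$, so $h_A(\Omega)$ is a circle passing through these three midpoints. Since three non-collinear points determine a unique circle, $h_A(\Omega)$ must be the midray circle $\Omega_A$ relative to vertex $A$. Its center is $h_A(O)$, which is the midpoint of the segment $AO$, and its radius is $R/2$.

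The key observation is that because $A$ lies on $\Omega$, the distance $|AO|$ equals $R$. Hence the midpoint of $AO$ lies at distance exactly $R/2$ from $O$, which shows that $O$ itself lies on $\Omega_A$. The same reasoning applied to each of the other three vertices shows that $O$ belongs to $\Omega_B$, $\Omega_C$, and $\Omega_D$ as well. Therefore $O$ is a common point of all four midray circles, and by the definition of the Steiner point given in the paper, $O$ must coincide with the Steiner point.

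I do not anticipate any real obstacle: the only subtlety is to notice that $h_A$ sends the circumcircle of $ABCD$ to the midray circle at $A$, which is immediate once one tracks where $B$, $C$, $D$ go. One might additionally remark that the four midray circles are pairwise distinct (their centers $\tfrac{1}{2}(A+O)$, $\tfrac{1}{2}(B+O)$, etc.\ are distinct since $A$, $B$, $C$, $D$ are), so the Steiner point is uniquely determined, confirming that $O$ is not merely \emph{a} common point but \emph{the} Steiner point.
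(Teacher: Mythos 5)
Your argument is correct. The paper itself does not give a proof here---it simply defers to Proposition 10.2 of the earlier paper \emph{Relationships between a Central Quadrilateral and its Reference Quadrilateral}---so your homothety argument supplies a genuine, self-contained justification where the present paper offers only a citation. The key step is sound: $h_A$ with center $A$ and ratio $\tfrac12$ carries $B$, $C$, $D$ (non-collinear, since $ABCD$ is convex) to the three midpoints defining the midray circle $\Omega_A$, so $\Omega_A=h_A(\Omega)$ has center the midpoint of $AO$ and radius $R/2$; and $|AO|=R$ then forces $O\in\Omega_A$. Repeating at each vertex puts $O$ on all four midray circles. The only point worth tightening is the final identification: strictly speaking you show $O$ is \emph{a} common point of the four circles, and pairwise distinctness of the circles only bounds the common intersection by two points rather than pinning it to one. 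Since the paper takes the existence and uniqueness of the Gergonne--Steiner point as given (it is defined as \emph{the} common point of the midray circles), exhibiting $O$ as a common point does suffice to conclude $O$ is the Steiner point, but it would be cleaner to say exactly that rather than to rest the uniqueness claim on the circles being distinct.
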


\begin{proof}
This is Proposition 10.2 of \cite{relationships}.
\end{proof}

\void{
The symbol $\mathbb{C}$ denotes the set of all triangle centers that lie on the circumcircle of the reference triangle.
}

The following lemma comes from \cite{MathWorld-Circumcircle}.

\begin{lemma}
\label{lemma:circumcircle}
The center $X_n$ of $\triangle ABC$ lies on the
circumcircle of $\triangle ABC$ for the following values of $n$:
\end{lemma}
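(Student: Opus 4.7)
The plan is to verify this lemma one center at a time using the barycentric coordinate characterization of the circumcircle. Recall that if a point $P$ has barycentric coordinates $(\alpha:\beta:\gamma)$ with respect to $\triangle ABC$, then $P$ lies on the circumcircle of $\triangle ABC$ if and only if
\[
a^2\beta\gamma + b^2\gamma\alpha + c^2\alpha\beta \;=\; 0,
\]
where $a$, $b$, $c$ are the side lengths opposite $A$, $B$, $C$. So the task reduces, for each listed index $n$, to substituting the known barycentric coordinates of $X_n$ into this quadratic form and confirming that the result vanishes identically as a polynomial in $a,b,c$.

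My plan would be to pull the first barycentric coordinate of each $X_n$ from the Encyclopedia of Triangle Centers (Kimberling's list \cite{KimberlingA}), expand it as a rational function of $a$, $b$, $c$ (symmetrizing cyclically to obtain $\beta$ and $\gamma$), and then simplify the circumcircle expression symbolically. For the entries that are well known to lie on the circumcircle, such as $X_{74}$ (the antipode of the Euler reflection point), $X_{98}$ (the Steiner point of the triangle), $X_{99}$ (the Steiner point of the reference triangle), $X_{100}$--$X_{112}$ (various circumcircle points including the Feuerbach harmonic conjugates), $X_{476}$ (the Tixier point), $X_{691}$, $X_{925}$, etc., the verification is immediate because their first trilinears are of the familiar form $1/(b^2-c^2)$ or $1/\cos(B-C)$ types that cancel by the cyclic identity
\[
\sum_{\mathrm{cyc}} a^2 \cdot \tfrac{1}{(c^2-a^2)(a^2-b^2)} \;=\; 0
\]
and its relatives. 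In practice, I would delegate the algebra to Mathematica, mirroring the paper's stated methodology.

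The only real obstacle here is bookkeeping rather than mathematical difficulty: the list contains many indices, and each must be checked, but every check is a one-line symbolic computation. Since the statement is attributed to MathWorld \cite{MathWorld-Circumcircle}, an alternative and fully acceptable "proof" is simply to quote the reference, treating the lemma as a compilation from an established source. I would therefore present the argument as a single sentence citing \cite{KimberlingA} and \cite{MathWorld-Circumcircle}, and relegate the case-by-case Mathematica verification to the supplementary notebooks already announced in the Methodology section.
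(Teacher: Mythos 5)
Your proposal is correct and matches the paper's treatment: the paper offers no proof of this lemma at all, simply attributing the list to \cite{MathWorld-Circumcircle}, and your fallback of citing that source together with a routine symbolic check of $a^2\beta\gamma+b^2\gamma\alpha+c^2\alpha\beta=0$ for each listed $X_n$ (relegated to the Mathematica notebooks) is exactly in the spirit of the paper's stated methodology. One small correction: $X_{98}$ is the Tarry point, not the Steiner point ($X_{99}$ is the Steiner point), though this does not affect the validity of your verification strategy.
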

74, 98--112, 476, 477, 675, 681, 689, 691, 697, 699, 701, 703, 705, 707, 709, 711, 713, 715, 717, 719, 721, 723, 725, 727, 729, 731, 733, 735, 737, 739, 741, 743, 745, 747, 753, 755, 759, 761, 767, 769, 773, 777, 779, 781, 783, 785, 787, 789, 791, 793, 795, 797, 803, 805, 807, 809, 813, 815, 817, 819, 825, 827, 831, 833, 835, 839--843, 898, 901, 907, 915, 917, 919, 925, 927, 929--935, 953, 972.

The following two lemmas comes from \cite{relationships}.

\begin{lemma}
\label{lemma:dpIsoscelesTriangleA}
Let $ABC$ be an isosceles triangle with $AB=AC$.
Then the center $X_n$ coincides with $A$ for the following values of $n$:
\end{lemma}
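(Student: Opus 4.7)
The plan is to verify the claim separately for each value of $n$ appearing in the list by a direct coordinate computation. Using the standard convention with side lengths $a=BC$, $b=CA$, $c=AB$, the hypothesis $AB=AC$ becomes $c=b$, and the vertex $A$ has barycentric coordinates $(1:0:0)$. So for each listed $n$ I look up the barycentric (or trilinear) coordinates of $X_n$ in the Encyclopedia of Triangle Centers, substitute $c=b$, and verify that the result simplifies to $(1:0:0)$.

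First I would record the coordinates of each $X_n$ as a triple of polynomial (or algebraic) functions $\bigl(f(a,b,c):f(b,c,a):f(c,a,b)\bigr)$ of the side lengths. Then I would substitute $c=b$ and simplify. By the symmetry of the configuration about the perpendicular bisector of $BC$, swapping $b$ and $c$ must permute the second and third coordinates, so after setting $c=b$ the second and third barycentric coordinates automatically become equal. Hence it suffices to check that one of them, say the second, vanishes when $c=b$, while the first does not. In practice this means factoring the second coordinate and exhibiting $(b-c)$ (or $\sin(B-C)$, in trilinear form) as a factor with multiplicity at least one, combined with a check that the first coordinate does not vanish identically.

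The main obstacle is simply the length of the list rather than any conceptual difficulty: each individual center requires only an elementary simplification, but the bookkeeping across many indices is tedious. This is exactly the kind of routine check that is handled cleanly by a symbolic algebra system, and indeed the supplementary Mathematica notebooks accompanying the paper can carry out all substitutions and verifications uniformly. A secondary subtlety is to confirm, for each center, that the first coordinate does not also vanish when $c=b$, since in that case the center would either be undefined on isosceles triangles or would require resolving a $(0:0:0)$ by taking a limit; any such degenerate $X_n$ should be excluded from the list rather than claimed to coincide with $A$.

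Finally, after verifying the list, I would observe that this lemma is the natural companion to Lemma~\ref{lemma:circumcircle}: both extract, from the vast catalogue of triangle centers, exactly those whose coordinates degenerate in a prescribed way under an imposed symmetry. The proof format is therefore essentially the same as that of Lemma~\ref{lemma:circumcircle}, which the authors have already cited from \cite{relationships}, so the present lemma can be regarded as the analogous table-lookup statement specialized to the isosceles condition $AB=AC$.
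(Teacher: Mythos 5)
Your proposal is correct and matches the approach the paper itself relies on: the lemma is stated without proof and cited from \cite{relationships}, where (exactly as in the paper's own suppressed verification of the $X_{100}$ case) each listed center is checked by substituting $b=c$ into its barycentric coordinates and confirming that the triple reduces to the form $(p:0:0)$, i.e.\ to vertex $A$. Your added observations --- that the symmetry $f(a,b,c)=f(a,c,b)$ forces the second and third coordinates to agree once $b=c$, so only the vanishing of one of them (and the nonvanishing of the first) needs checking --- are sound refinements of that same computational argument.
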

59, 99, 100, 101, 107, 108, 109, 110, 112, 162, 163, 190, 249, 250, \
476, 643, 644, 645, 646, 648, 651, 653, 655, 658, 660, 662, 664, 666, \
668, 670, 677, 681, 685, 687, 689, 691, 692, 765, 769, 771, 773, 777, \
779, 781, 783, 785, 787, 789, 791, 793, 795, 797, 799, 803, 805, 807, \
809, 811, 813, 815, 817, 819, 823, 825, 827, 831, 833, 835, 839, 874, \
877, 880, 883, 886, 889, 892, 898, 901, 906, 907, 919, 925, 927, 929, \
930, 931, 932, 933, 934, 935.

\begin{lemma}
\label{lemma:dpIsoscelesTriangleMidpoint}
Let $ABC$ be an isosceles triangle with $AB=AC$.
Let $M$ be the midpoint of $BC$.
Then the center $X_n$ coincides with $M$ for the following values of $n$:
\end{lemma}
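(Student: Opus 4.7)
The plan is to reduce the claim to a uniform barycentric computation carried out once for each $n$ in the list. The midpoint $M$ of $BC$ has barycentric coordinates $(0:1:1)$ with respect to $\triangle ABC$. Because every triangle center is, by Kimberling's definition, invariant under the symmetries of the reference triangle, in the isosceles case $b=c$ the point $X_n$ must lie on the axis of symmetry $AM$; equivalently, if $X_n=(f(a,b,c):f(b,c,a):f(c,a,b))$, then after substituting $c:=b$ the second and third coordinates automatically coincide. The condition that singles out $M$ from the other points of the axis is thus the single scalar identity $f(a,b,b)=0$.

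First I would retrieve the barycentric (or trilinear) formula for each listed $X_n$ from Kimberling's Encyclopedia of Triangle Centers \cite{ETC}. The verification then amounts to substituting $c:=b$ in $f(a,b,c)$ and simplifying to $0$. Each such identity is a routine symbolic computation and can be performed uniformly in a single Mathematica loop, matching the workflow already described in the Methodology section. Care must be taken in a few edge cases: the homogeneous representative of $X_n$ may carry a common factor that itself vanishes when $b=c$, in which case one must cancel this factor and re-normalize before concluding. One should also be certain to use the same normalization convention (barycentric versus trilinear) throughout, since the condition ``first coordinate vanishes'' is invariant under the change but the auxiliary cancellations are not.

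The principal obstacle is the length of the list rather than any single step. To render the verification conceptually cleaner, I would look for families among the indices: many blocks appear to be systematic rather than accidental. In particular, a large fraction of the listed $n$ also appear in Lemma~\ref{lemma:circumcircle} (centers on the circumcircle) and in Lemma~\ref{lemma:dpIsoscelesTriangleA} (centers that collapse to $A$). A natural guess is that isogonal conjugation, isotomic conjugation, or circumcircle antipode maps the second family into the present one, because such a map would send $A$ to the point where the perpendicular bisector of $BC$ meets the relevant conic, which in the isosceles situation is precisely $M$. Identifying and proving one such correspondence would dispose of a long sublist in a single stroke, leaving only a handful of sporadic centers that require direct barycentric substitution.
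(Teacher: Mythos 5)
Your proposal is correct and is essentially the approach the paper itself relies on: the paper offers no proof of this lemma beyond a citation to \cite{relationships}, where (as the Methodology section here describes) such lists are verified by exact symbolic computation in barycentric coordinates --- precisely your reduction of checking that the first ETC coordinate vanishes under the substitution $c=b$, while the bisymmetry $f(a,b,c)=f(a,c,b)$ in Kimberling's definition forces the second and third coordinates to agree, so the point becomes $(0:1:1)=M$. Your closing hunch about a structural shortcut is also sound, but the right map is the complement (the homothety at the centroid with ratio $-\tfrac12$) rather than isogonal or isotomic conjugation: it carries the circumcircle to the nine-point circle and $A=(1:0:0)$ to $M=(0:1:1)$, so every listed center that is the complement of a center appearing in Lemma~\ref{lemma:dpIsoscelesTriangleA} (for instance $X_{11},X_{115},X_{116},X_{122}$--$X_{125},X_{127}$, complements of $X_{100},X_{99},X_{101},X_{107}$--$X_{110},X_{112}$) is dispatched at once, leaving only the remaining sporadic centers such as $X_{244}$ to direct substitution.
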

11, 115, 116, 122--125, 127, 130, 134--137, 139, \
244--247, 338, 339, 865-868.

The symbol $\mathbb{M}$ denotes these points.

\begin{lemma}
\label{lemma:dpIsoscelesTriangleAntipode}
Let $ABC$ be an isosceles triangle with $AB=AC$.
Let $P$ be the antipode of point $A$ with respect to the circumcircle of $\triangle ABC$.
Then the center $X_n$ coincides with $P$ for the following values of $n$:
\end{lemma}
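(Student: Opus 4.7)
The plan is to combine the two previously stated lemmas via the reflective symmetry of the isosceles triangle. In any triangle with $AB = AC$, the perpendicular bisector $\ell$ of $BC$ coincides with the altitude from $A$, with the line $AO$ through the circumcenter, and hence with the diameter through $A$; therefore $\ell$ meets the circumcircle at exactly two points, namely $A$ itself and its antipode $P$.

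First I would show that every triangle center $X_n$ lies on $\ell$ when $AB = AC$. The reflection across $\ell$ fixes $A$ and swaps $B$ with $C$, so it is a symmetry of $\triangle ABC$; since the barycentric (or trilinear) coordinates of a triangle center are given by a function symmetric with respect to the last two arguments, this reflection fixes $X_n$, placing $X_n$ on $\ell$.

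Next I would combine this observation with Lemma~\ref{lemma:circumcircle}. For every index $n$ appearing there, $X_n$ lies on the circumcircle of $\triangle ABC$ in a general triangle, hence also in the isosceles specialization; together with $X_n \in \ell$, this forces $X_n \in \{A, P\}$. Lemma~\ref{lemma:dpIsoscelesTriangleA} identifies precisely the indices for which $X_n = A$ in the isosceles case. Excluding these from the list of Lemma~\ref{lemma:circumcircle} yields exactly the indices for which $X_n = P$, which is the list to be produced.

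The main obstacle is completeness in the reverse direction: one must rule out that some index $n$ not appearing in Lemma~\ref{lemma:circumcircle} nevertheless produces $X_n = P$ in the isosceles case (a coincidence that could occur only on the codimension-one locus $b = c$, since $P$ is by definition on the circumcircle). I would handle this in the computational spirit of the paper, by substituting $b = c$ into the explicit symbolic formula for $X_n$ for each $n \le 1000$ and comparing against the known coordinates of $P$, with the exact arithmetic delegated to the companion Mathematica notebook described in Section~\ref{section:methodology}.
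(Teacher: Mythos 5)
Your proposal is exactly the paper's argument—the paper's entire proof reads ``This follows from Lemmas \ref{lemma:circumcircle} and \ref{lemma:dpIsoscelesTriangleA}''—and you have simply made explicit the two facts it leaves implicit: every center of an isosceles triangle lies on the axis of symmetry, which meets the circumcircle only at $A$ and its antipode $P$, so the circumcircle centers of Lemma \ref{lemma:circumcircle} not identified with $A$ by Lemma \ref{lemma:dpIsoscelesTriangleA} must equal $P$. Your final paragraph about the converse is extra caution the lemma does not require (it asserts only that $X_n=P$ for the listed $n$, not that the list is exhaustive), but it does no harm.
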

74, 98, 102--106, 111, 477, 675, 697, 699, 
701, 703, 705, 707, 709, 711, 713, 715, 717, 719, 721, 723, 725, 727, 729, 731, 733, 735, 737, 739, 
741, 743, 745, 747, 753, 755, 759, 761, 767, 840--843, 915, 917, 953, 972.

\begin{proof}
This follows from Lemmas \ref{lemma:circumcircle} and \ref{lemma:dpIsoscelesTriangleA}.
\end{proof}

The symbol $\mathbb{T}$ denotes these points.

Our computer study found several results for the central quadrilateral of cyclic quadrilaterals.
They are shown in the following table.

\bigskip
\begin{center}
\begin{tabular}{|l|p{3.3in}|}
\hline
\multicolumn{2}{|c|}{\textbf{\color{blue}\large \strut Central Quadrilaterals of Cyclic Quadrilaterals}}\\ \hline
\textbf{Shape of central quad}&\textbf{centers}\\ \hline
\ru parallelogram&$\mathbb{M}$, 148--150, 290, 402, 620, 671, 903\\ \hline
\ru tangential&$\mathbb{T}$, 3, 399\\ \hline
\end{tabular}
\end{center}


\begin{theorem}
\label{thm:cqSteinX3}
Let $E$ be the Steiner point of cyclic quadrilateral $ABCD$.
Let $n$ be 3 or 399.
Let $F$, $G$, $H$, and $I$ be the $X_n$-points of triangles $\triangle ABE$, $\triangle BCE$, $\triangle CDE$,
and $\triangle DAE$, respectively. (Figure~\ref{fig:cqSteinX3} shows the case when $n=3$.)
Then $FGHI$ is tangential with incenter $E$.
The incircle of $FGHI$ is concentric with the circumcircle of $ABCD$ and
the inradius of $FGHI$ is half the circumradius of $ABCD$.
\end{theorem}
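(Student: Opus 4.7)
The plan rests on two observations: that the Steiner point of a cyclic quadrilateral is simply the circumcenter $O$ of $ABCD$ (Proposition~\ref{prop:circumcenter}), so $EA=EB=EC=ED=R$; and that this makes every radial triangle isoceles with apex $E$ and legs of common length $R$, forcing the chosen triangle centers onto very restrictive loci relative to $E$.

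For $n=3$ the argument is essentially one line. Since $F$ is the circumcenter of $\triangle ABE$, we have $FE=FB$, so $F$ lies on the perpendicular bisector of segment $EB$; by the same token $G$, the circumcenter of $\triangle BCE$, satisfies $GE=GB$ and also lies on the perpendicular bisector of $EB$. Hence the whole line $FG$ coincides with that perpendicular bisector, whose distance from $E$ is exactly $EB/2=R/2$. The same reasoning applied in turn to the vertices $C$, $D$, $A$ shows that each of the lines $GH$, $HI$, $IF$ lies at distance $R/2$ from $E$. Therefore the four sides of $FGHI$ are all tangent to the circle centered at $E$ of radius $R/2$, and a check that $E$ is on the interior side of each side identifies $FGHI$ as tangential with incenter $E$ and inradius $R/2$; this circle is concentric with the circumcircle of $ABCD$ (which is centered at $E=O$) and of half its radius.

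For $n=399$ I would reduce to the $n=3$ case by a central symmetry. The key fact to invoke is that $X_{399}$ of an arbitrary triangle is the reflection of its circumcenter $X_3$ in its Euler reflection point $X_{110}$. In each radial triangle---say $\triangle ABE$ with $EA=EB$---Lemma~\ref{lemma:dpIsoscelesTriangleA} applies with the apex $E$ playing the role of the apex vertex (the index $110$ appears in the list), so $X_{110}(\triangle ABE)=E$. Consequently the $X_{399}$ of $\triangle ABE$ is the reflection of its $X_3$ through $E$, and the same is true for $\triangle BCE$, $\triangle CDE$, $\triangle DAE$. Thus the $FGHI$ produced by $n=399$ is the image under point reflection through $E$ of the $FGHI$ produced by $n=3$. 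A central reflection through $E$ is an isometry fixing every circle centered at $E$, so the incircle from the $n=3$ case---centered at $E$ with radius $R/2$---is simultaneously the incircle in the $n=399$ case, which is precisely what is claimed.

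The step I expect to require the most care is the identification $X_{399}(T)=$ reflection of $X_3(T)$ in $X_{110}(T)$ for a general triangle $T$; everything else is then a clean consequence of the isoceles structure. If this reflection relation is not available in a form usable here, the fallback is a direct barycentric verification in the spirit of Section~\ref{section:methodology}: compute the $X_{399}$ coordinates in each radial triangle and check that the four sidelines of $FGHI$ satisfy the tangency condition $\mathrm{dist}(E,\,\cdot\,)=R/2$ symbolically in Mathematica.
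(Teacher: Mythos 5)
Your $n=3$ argument is correct and is essentially the paper's own proof in different clothing: the paper observes that each side of $FGHI$, being the line of centers of two circles sharing a chord such as $AE$, is the perpendicular bisector of that common chord and hence lies at distance $EA/2=R/2$ from $E$; you reach the identical conclusion by noting that both endpoints of a side such as $FG$ are equidistant from $E$ and $B$ and so lie on the perpendicular bisector of $EB$. Same computation, same conclusion that all four distances equal $R/2$, so $E$ is the incenter and the inradius is half the circumradius. Where you genuinely go beyond the paper is the case $n=399$: the paper explicitly offers a geometric proof only for $n=3$ and leaves $n=399$ to the symbolic computations described in its methodology section, whereas you reduce $n=399$ to $n=3$ by the identity that $X_{399}$ (the Parry reflection point) is the reflection of $X_3$ in $X_{110}$, combined with the fact from Lemma~\ref{lemma:dpIsoscelesTriangleA} that $X_{110}$ of each isosceles radial triangle sits at the apex $E$; since a point reflection in $E$ fixes the circle centered at $E$ of radius $R/2$ and preserves tangency, the $n=399$ quadrilateral inherits the same incircle. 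That reduction is valid, but it rests entirely on the external relation $X_{399}=$ reflection of $X_3$ in $X_{110}$ (equivalently, $X_{110}$ is the midpoint of $X_3X_{399}$), which is recorded in \cite{ETC} but is not among the lemmas the paper provides, so you should cite or verify it explicitly, in the same way the paper cites \cite{ETC214} and \cite{ETC165} for the analogous midpoint and ratio facts used in Theorems~\ref{thm:biSteinX214} and~\ref{thm:biSteinX165}. The only other (minor) point, which the paper also glosses over, is checking that the four tangency points actually fall on the closed sides of $FGHI$ rather than on their extensions.
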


\begin{figure}[h!t]
\centering
\includegraphics[width=0.3\linewidth]{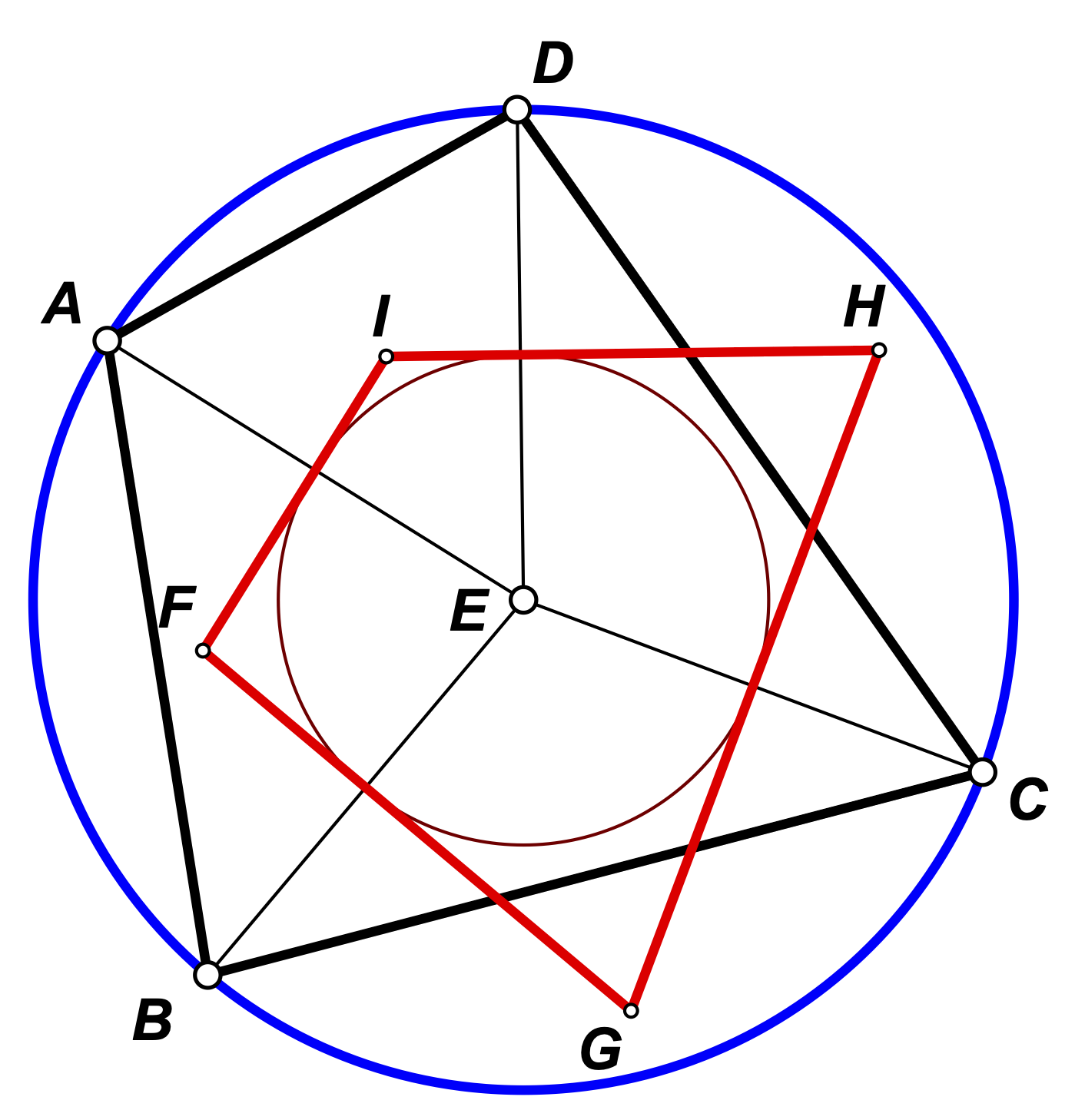}
\caption{cyclic, $X_3$-points $\implies$ tangential}
\label{fig:cqSteinX3}
\end{figure}

\newpage
For the case $n=3$, we can give a purely geometrical proof.

\begin{proof}
By Proposition~\ref{prop:circumcenter}, $E$ is the circumcenter of quadrilateral $ABCD$.
The points $F$, $G$, $H$, $I$ are the circumcenters of triangles $\triangle ABE$, $\triangle BCE$, $\triangle CDE$,
and $\triangle DAE$, respectively.
Line $AE$ is the radical axis of of circles $(AEB)$ and $(AED)$.
Let $X=AE\cap FI$.
Clearly we have $AE\perp FI$ and $EX=AE/2$.
Define $Y$, $Z$, and $W$ similarly (Figure~\ref{fig:cqSteinX3proof}).
\begin{figure}[h!t]
\centering
\includegraphics[width=0.4\linewidth]{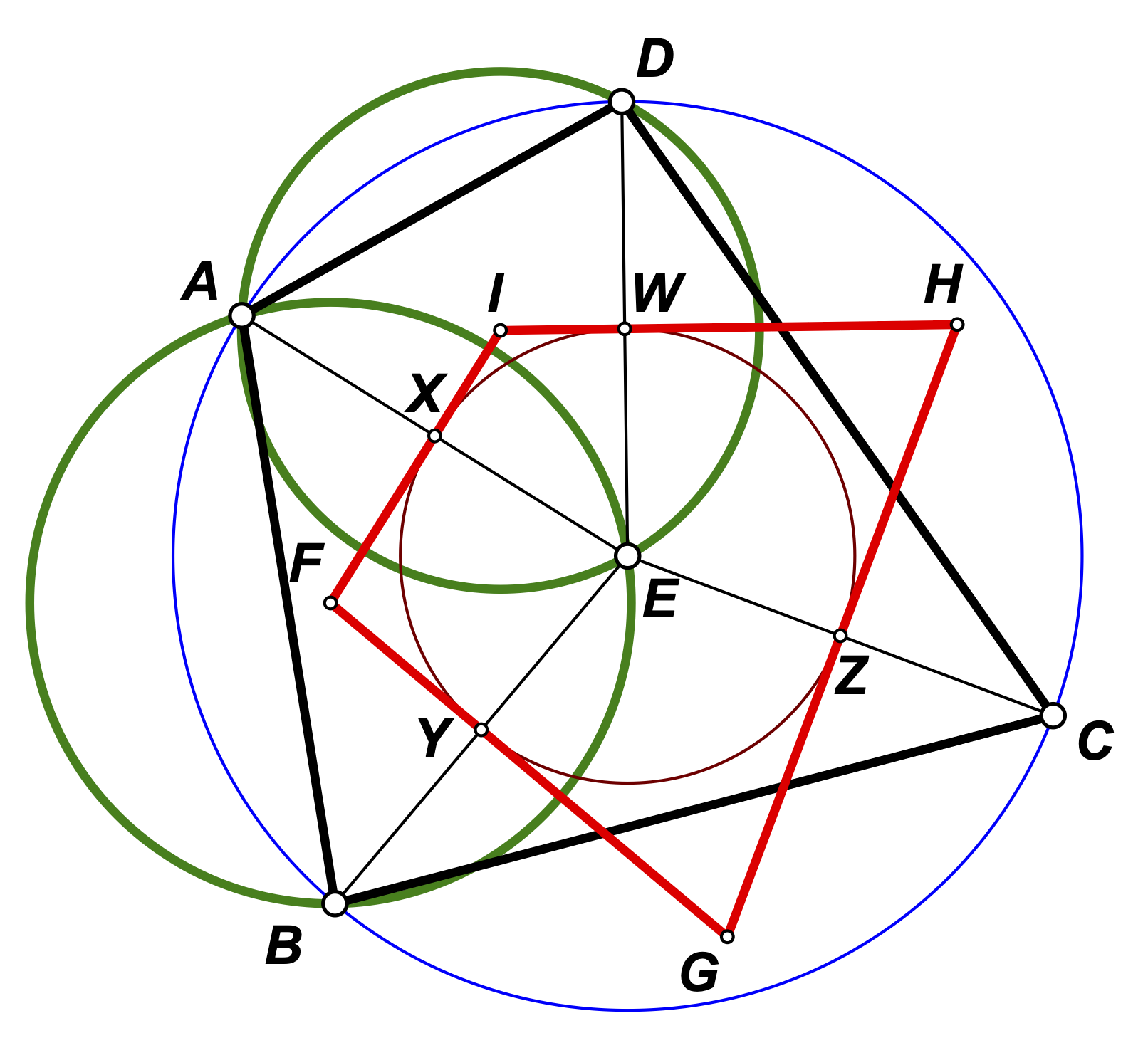}
\caption{case when $n=3$}
\label{fig:cqSteinX3proof}
\end{figure}
Similarly we have $BE\perp FG$, $CE\perp GH$, and $DE\perp HI$,
and $EY=EB/2$, and $EZ=EC/2$,  and $EW=ED/2$.
Since $EA=EB=EC=ED$, we have $EX=EY=EZ=EW$, so $E$ is the incenter
of quadrilateral $FGHI$.
Therefore $FGHI$ is a tangential quadrilateral.
\end{proof}

\begin{theorem}
\label{thm:cqSteinT}
Let $E$ be the Steiner point of cyclic quadrilateral $ABCD$.
Let $X_n\in\mathbb{T}$.
Let $F$, $G$, $H$, and $I$ be the $X_n$-points of triangles $\triangle ABE$, $\triangle BCE$, $\triangle CDE$,
and $\triangle DAE$, respectively (Figure~\ref{fig:cqSteinX74}).
Then $FGHI$ is a tangential quadrilateral with incenter $E$.
The incircle of $FGHI$ coincides with the circumcircle of $ABCD$.
\end{theorem}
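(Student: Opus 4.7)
The plan is to exploit the same idea as in Theorem \ref{thm:cqSteinX3}, namely that $E$ is the circumcenter of $ABCD$ (Proposition~\ref{prop:circumcenter}), so that each radial triangle $\triangle ABE$, $\triangle BCE$, $\triangle CDE$, $\triangle DAE$ is isosceles with apex at $E$ (because $EA=EB=EC=ED=R$, where $R$ is the circumradius of $ABCD$). With the apex identified as $E$, the hypothesis $X_n\in\mathbb{T}$ together with Lemma~\ref{lemma:dpIsoscelesTriangleAntipode} says that in each radial triangle the point $X_n$ coincides with the antipode of $E$ on the circumcircle of that triangle. So $F$, $G$, $H$, $I$ are, respectively, the antipodes of $E$ on the circumcircles of $\triangle ABE$, $\triangle BCE$, $\triangle CDE$, $\triangle DAE$.

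Next I would use the classical "angle in a semicircle" observation. Because $EF$ is a diameter of the circumcircle of $\triangle ABE$, we get $\angle EBF = 90^\circ$ (and $\angle EAF=90^\circ$). Because $EG$ is a diameter of the circumcircle of $\triangle BCE$, we likewise get $\angle EBG = 90^\circ$. Thus both $F$ and $G$ lie on the unique line through $B$ perpendicular to $EB$, so $F$, $B$, $G$ are collinear, and the side $FG$ of the central quadrilateral is exactly this perpendicular at $B$. Since $B$ lies on the circumcircle of $ABCD$ (which has center $E$ and radius $R$), the line through $B$ perpendicular to the radius $EB$ is by definition tangent to that circle at $B$.

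Running the same argument at the other three common vertices would then give: $GH$ is tangent to the circumcircle of $ABCD$ at $C$, $HI$ is tangent at $D$, and $IF$ is tangent at $A$. Hence all four sides of $FGHI$ are tangent to the circumcircle of $ABCD$, which proves that $FGHI$ is tangential, that its incircle is exactly the circumcircle of $ABCD$, and that its incenter is $E$, as required.

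The main obstacle is really just the bookkeeping in step one: to apply Lemma~\ref{lemma:dpIsoscelesTriangleAntipode} correctly one must identify the \emph{apex} of each isosceles radial triangle as $E$ (not as one of the quadrilateral's vertices), so that the relevant antipode is the antipode of $E$ and $EF$, $EG$, $EH$, $EI$ become diameters. Once this identification is made, the rest of the argument is an immediate consequence of Thales' theorem and the tangent-radius characterization, with no computation required.
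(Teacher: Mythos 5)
Your proposal is correct and follows essentially the same route as the paper's proof: both identify $E$ as the circumcenter via Proposition~\ref{prop:circumcenter}, apply Lemma~\ref{lemma:dpIsoscelesTriangleAntipode} to each isosceles radial triangle (apex $E$) to realize $F,G,H,I$ as antipodes of $E$, and then use Thales' theorem to show each side of $FGHI$ is the tangent line to the circumcircle of $ABCD$ at the corresponding vertex. The only cosmetic difference is that you organize the angle argument around the shared vertex $B$ and side $FG$, whereas the paper works with vertex $A$ and side $FI$.
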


\begin{figure}[h!t]
\centering
\includegraphics[width=0.5\linewidth]{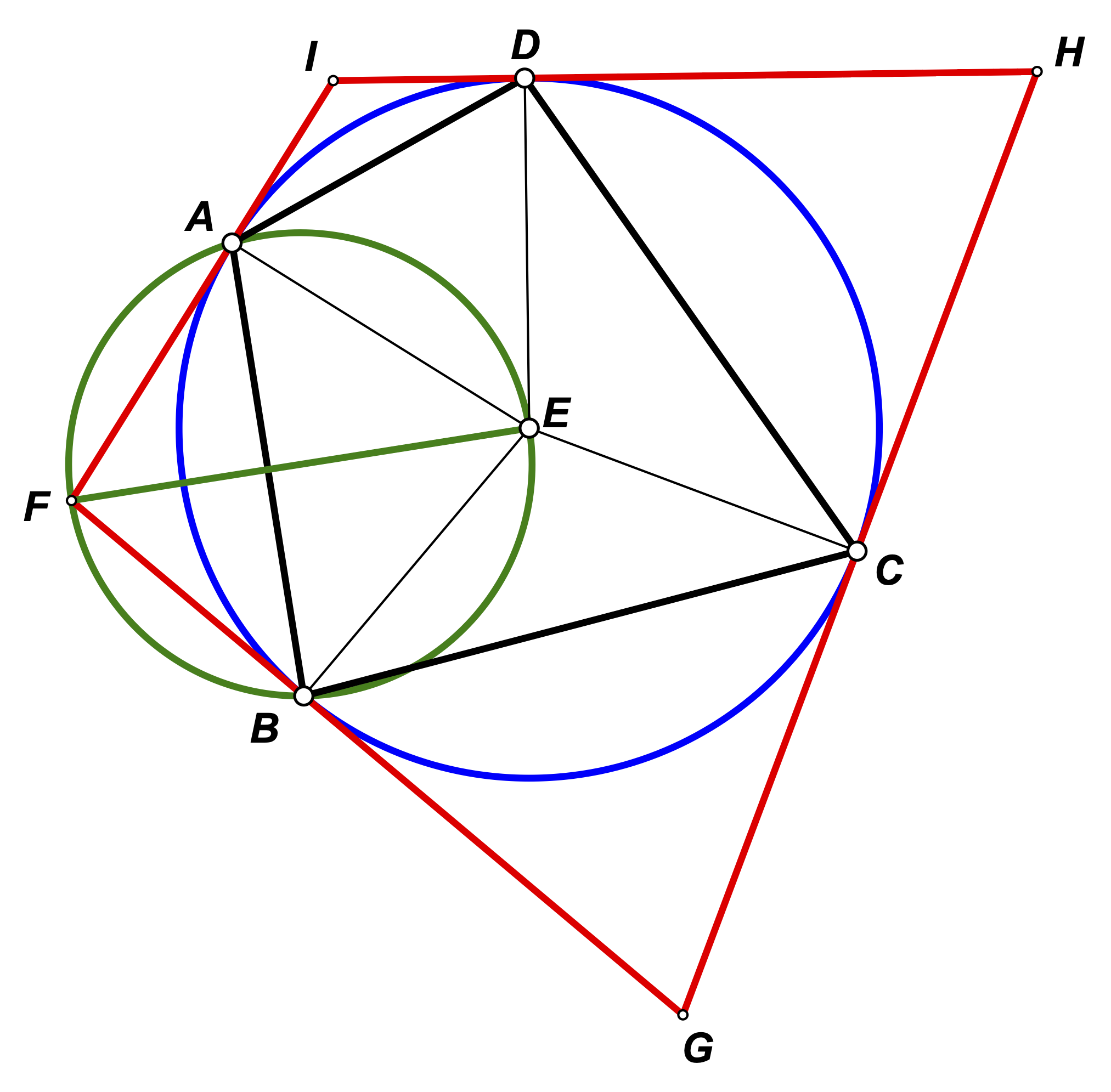}
\caption{cyclic, $X_n\in\mathbb{T}$ $\implies$ $FGHI$ tangential}
\label{fig:cqSteinX74}
\end{figure}

\begin{proof}
By Proposition~\ref{prop:circumcenter}, $E$ is the circumcenter of quadrilateral $ABCD$.
Since $EA=EB$, $\triangle EAB$ is isosceles with vertex $E$, so $F$ is the antipode of $E$
with respect to the circumcircle of $\triangle EAB$ by Lemma~\ref{lemma:dpIsoscelesTriangleAntipode}.
Therefore, $\angle FAE=90\degrees$.
Similarly, $\angle IAE=90\degrees$.
Hence $I$, $A$, and $F$ are collinear and $EA\perp FI$.
Similarly, $EB\perp FG$, $EC\perp GH$, and $ED\perp HI$.
Since $EA=EB=EC=ED$, it follows that $FGHI$ is a tangential quadrilateral
and the incenter of $FGHI$ coincides with $E$.
\end{proof}

\begin{theorem}
\label{thm:cqStein}
Let $E$ be the Steiner point of cyclic quadrilateral $ABCD$.
Let $n$ be 11, 115, 116, 122-125, 127, 130, 134--137, 139, 148--150, 244--247, 290, 338, 339, 402, 620, 671, 865--868, or 903.
Let $F$, $G$, $H$, and $I$ be the $X_n$-points of triangles $\triangle ABE$, $\triangle BCE$, $\triangle CDE$,
and $\triangle DAE$, respectively.
Then $FGHI$ is a parallelogram.
\end{theorem}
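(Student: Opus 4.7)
I plan to exploit the isosceles structure of the radial triangles forced by $E$ being the circumcenter, and then reduce the parallelogram property to a single identity about midpoints.

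The setup: by Proposition~\ref{prop:circumcenter}, the Steiner point $E$ coincides with the circumcenter of $ABCD$, so $|EA|=|EB|=|EC|=|ED|=R$; consequently each of the four radial triangles is isosceles with apex at $E$ and legs of length $R$, the base being a side of the quadrilateral. Any triangle center of an isosceles triangle lies on its axis of symmetry, which here is the perpendicular bisector of the associated side, passing through $E$. I will place $E$ at the origin, so that the midpoint of $AB$ is $M_{AB}=\tfrac12(A+B)$ (and similarly for the other three sides) and $F$ lies on the line through the origin and $M_{AB}$. Hence I can write
\[
F=\lambda_{AB}M_{AB},\quad G=\lambda_{BC}M_{BC},\quad H=\lambda_{CD}M_{CD},\quad I=\lambda_{DA}M_{DA},
\]
where each $\lambda$ is a scalar depending on $n$, the common leg length $R$, and the base length of that particular radial triangle.

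The key step will be to show that for every $n$ in the stated list the scalar $\lambda$ is in fact \emph{independent of the base length}. Once that is established, all four scalars coincide in a common value $\lambda$, and
\[
F+H \;=\; \tfrac{\lambda}{2}(A+B+C+D) \;=\; G+I,
\]
so the diagonals $FH$ and $GI$ of $FGHI$ share their midpoint, proving that $FGHI$ is a parallelogram. For those $n$ in the list with $X_n\in\mathbb{M}$, Lemma~\ref{lemma:dpIsoscelesTriangleMidpoint} immediately yields $X_n=M_{\text{base}}$ in any isosceles triangle, so $\lambda=1$ identically and $FGHI$ is literally the Varignon parallelogram of $ABCD$.

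The main obstacle concerns the remaining eight indices $n\in\{148,149,150,290,402,620,671,903\}$, which are not covered by that lemma. For each of these I would substitute the ETC barycentric formula for $X_n$ into a generic isosceles triangle of legs $R$ and base $s$ and verify by exact symbolic simplification that the position of $X_n$ along the axis of symmetry depends only on $R$, i.e.\ the $s$-dependence cancels. This is an essentially mechanical but genuinely case-by-case computation, most cleanly discharged in the paper's Mathematica framework; once the eight verifications are in place, the parallelogram conclusion for all listed $n$ follows uniformly from the bisecting-diagonals identity displayed above.
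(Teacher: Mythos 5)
Your reduction is sound and, at bottom, it is the same mechanism the paper relies on: the paper's proof is purely a citation to Theorems 9.2, 9.3, 9.8 and 9.9 of \cite{relationships}, with the remark that those proofs exhibit $FGHI$ as homothetic (with center $E$) to a parallelogram attached to $ABCD$; a homothety centered at $E$ carrying the Varignon parallelogram $M_{AB}M_{BC}M_{CD}M_{DA}$ to $FGHI$ is exactly your statement that $F=\lambda M_{AB},\dots$ with one common $\lambda$. Your version has the advantage of being self-contained and of isolating precisely what must be checked, namely that $\lambda=\lambda(n,R)$ does not depend on the base length $s$; and your observation that the $\mathbb{M}$-indices give $\lambda=1$ (the Varignon parallelogram itself) disposes of most of the list cleanly. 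Two cautions. First, your later phrasing ``the position of $X_n$ along the axis of symmetry depends only on $R$'' is not the right condition and is in fact false even when the argument works: with $\lambda$ constant one has $|EF|=\lambda\sqrt{R^2-s^2/4}$, which still depends on $s$. What you need is exactly your earlier formulation, that the \emph{ratio} $\lambda$ is independent of $s$. Second, the eight verifications for $n\in\{148,149,150,290,402,620,671,903\}$ are genuinely substantive, not routine: the analogous claim fails for $X_3$ (there $\lambda=R^2/(2(R^2-s^2/4))$, which is why $X_3$ yields a tangential rather than a parallelogram central quadrilateral in Theorem~\ref{thm:cqSteinX3}). For $148$--$150$, $671$, $903$ the constancy follows quickly from Lemmas~\ref{lemma:dpIsoscelesTriangleA} and \ref{lemma:dpIsoscelesTriangleMidpoint} together with the ETC descriptions of these points as reflections of apex-type centers in $X_2$ or in midpoint-type centers, but for $290$, $402$ and $620$ you would actually have to carry out the symbolic computation you describe (or cite \cite{relationships}) before the proof is complete; as written, that step is asserted rather than established.
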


\begin{proof}
If $n$ is 11, 115, 116, 122--125, 127, 130, 134--137, 139, 244--247, 338, 339, 865--868,
the result is true by the proof of Theorem 9.3 of \cite{relationships}.

If $n$ is 402 or 620, the result is true by the proof of Theorem 9.2 of \cite{relationships}.

If $n$ is 290, 671, or 903, the result is true by the proof of Theorem 9.8 of \cite{relationships}.

If $n$ is 148, 149, or 150, the result is true by the proof of Theorem 9.9 of \cite{relationships}.

The proofs in \cite{relationships} show that the central quadrilateral is homothetic
to a parallelogram associated with the reference quadrilateral.

This covers all the cases.
\end{proof}


The following proposition is useful
when giving geometric proofs of results involving the Steiner point of an
orthodiagonal quadrilateral.

\begin{proposition}
\label{prop:spOrtho}
The Steiner point of an orthodiagonal quadrilateral coincides with the point
of intersection of the perpendicular bisectors of the diagonals.
\end{proposition}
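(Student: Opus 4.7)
The plan is analytic, exploiting orthodiagonality to choose adapted coordinates. Place the intersection $O$ of the diagonals at the origin with $AC$ on the $x$-axis and $BD$ on the $y$-axis, writing $A=(-a,0)$, $C=(c,0)$, $B=(0,-b)$, $D=(0,d)$ with $a,b,c,d>0$. The perpendicular bisectors of the two diagonals are the lines $x=(c-a)/2$ and $y=(d-b)/2$, meeting at
\[
P = \Bigl(\tfrac{c-a}{2},\ \tfrac{d-b}{2}\Bigr).
\]
Since the Steiner point is defined as the common point of the four midray circles, it suffices to show that $P$ lies on each of them.

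The key reformulation is the following. The midray circle at vertex $A$ passes through the midpoints $(A+B)/2$, $(A+C)/2$, $(A+D)/2$, which are the images of $B$, $C$, $D$ under the homothety $h_A$ of center $A$ and ratio $1/2$. Hence this midray circle is $h_A$ applied to the circumcircle of $\triangle BCD$, and $P$ lies on it if and only if $2P-A$ lies on the circumcircle of $\triangle BCD$. Substituting gives $2P-A=(c,\,d-b)$, reducing the problem to a single circle-membership check. I would determine the circumcenter $K$ of $\triangle BCD$ explicitly from the coordinates of $B$, $C$, $D$ (routine: two perpendicular-bisector equations), then verify by expansion that $|KC|^2$ equals the squared distance from $K$ to $(c,d-b)$; the resulting identity collapses at once using $(c^2+bd)^2-(c^2-bd)^2=4c^2bd$ and $(b+d)^2-(d-b)^2=4bd$.

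The three remaining memberships, of $P$ in the midray circles at $B$, $C$, and $D$, follow either by repeating the same calculation with the roles of $(a,b,c,d)$ permuted, or more elegantly from the $\mathbb{Z}_2\times\mathbb{Z}_2$ symmetry of the coordinate setup: the reflections $(x,y)\mapsto(-x,y)$ and $(x,y)\mapsto(x,-y)$ realize the vertex swaps $A\leftrightarrow C$ and $B\leftrightarrow D$, and both reflections permute the four midray circles while fixing the geometric role of $P$. Once all four memberships are established, $P$ is the common point of the midray circles, hence the Steiner point. The only mildly subtle step is recognizing the homothetic identification of the midray circle at $A$ with the image of the circumcircle of $\triangle BCD$ under $h_A$; thereafter the proof reduces to a one-line polynomial identity, and I do not anticipate a genuine obstacle.
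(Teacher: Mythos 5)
Your proposal is correct, and it is necessarily a different route from the paper's, because the paper gives no argument at all here: its ``proof'' is a citation to Proposition 10.4 of \cite{relationships}. Your coordinate setup is sound, the key step --- identifying the midray circle at $A$ as the image of the circumcircle of $\triangle BCD$ under the homothety $h_A$ of ratio $\tfrac12$, so that membership of $P$ reduces to checking that $2P-A=(c,\,d-b)$ lies on that circumcircle --- is exactly right, and the verification does collapse as you predict (indeed even more cleanly: the circumcenter of $\triangle BCD$ lies on the line $y=(d-b)/2$, and $(c,\,d-b)$ is the reflection of $C$ in that line, so it is automatically concyclic with $B$, $C$, $D$). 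The one imprecision is your symmetry remark: the reflection $(x,y)\mapsto(-x,y)$ sends $A=(-a,0)$ to $(a,0)$ and $C=(c,0)$ to $(-c,0)$, so it realizes the swap $A\leftrightarrow C$ only when $a=c$; for a general orthodiagonal quadrilateral it is a symmetry of the \emph{family} (it carries the configuration with parameters $(a,b,c,d)$ to the one with $a$ and $c$ exchanged, and $P$ is equivariant under this relabeling), not of the given quadrilateral. That is easily repaired, and your stated fallback --- repeating the one-line computation with the parameters permuted --- is valid as written, so there is no genuine gap. What your argument buys over the paper's is a short, self-contained proof that simultaneously establishes that the four midray circles do have a common point in this case, rather than deferring both existence and identification to an external reference.
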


\begin{figure}[h!t]
\centering
\includegraphics[width=0.5\linewidth]{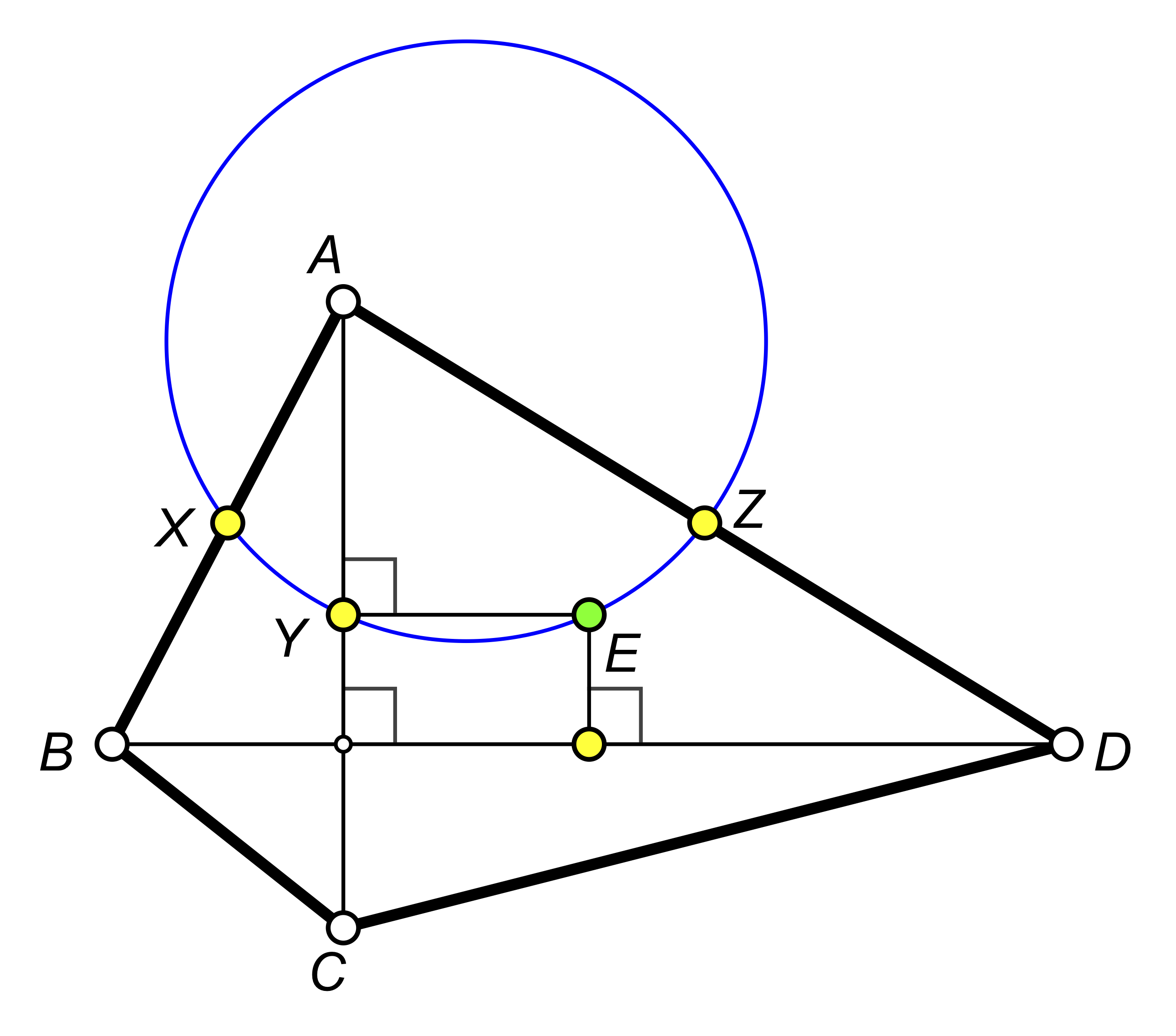}
\caption{Steiner point of an orthodiagonal quadrilateral}
\label{fig:spOrtho}
\end{figure}

\begin{proof}
This is Proposition 10.4 of \cite{relationships}.
\end{proof}

\newpage

We found some results for an orthodiagonal quadrilateral.
They are shown in the following table.

\bigskip
\begin{center}
\begin{tabular}{|l|p{3.3in}|}
\hline
\multicolumn{2}{|c|}{\textbf{\color{blue}\large \strut Central Quads of an Orthodiagonal  Quadrilateral}}\\ \hline
\textbf{Shape of central quad}&\textbf{centers}\\ \hline
\ru cyclic&3\\ \hline
\ru trapezoid&4\\ \hline
\end{tabular}
\end{center}


\begin{theorem}
\label{thm:odSteinX3}
Let $E$ be the Steiner point of orthodiagonal quadrilateral $ABCD$.
Let $F$, $G$, $H$, and $I$ be the circumcenters of triangles $\triangle ABE$, $\triangle BCE$, $\triangle CDE$,
and $\triangle DAE$, respectively (Figure~\ref{fig:odSteinX3}).
Then $FGHI$ is a cyclic quadrilateral.
\end{theorem}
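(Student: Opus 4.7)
The plan is to identify the sides of $FGHI$ as perpendicular bisectors of the segments $EA$, $EB$, $EC$, $ED$, and then deduce the concyclicity of $F,G,H,I$ from the orthodiagonality of $ABCD$ via a directed-angle argument.

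First, since $F$ is the circumcenter of $\triangle ABE$, $FA=FE$ places $F$ on the perpendicular bisector of $EA$; since $I$ is the circumcenter of $\triangle DAE$, $IA=IE$ places $I$ on the same line. Hence $FI$ is the perpendicular bisector of $EA$, and analogously $FG$, $GH$, $HI$ are the perpendicular bisectors of $EB$, $EC$, $ED$. By Proposition~\ref{prop:spOrtho}, $E$ also lies on the perpendicular bisectors of the two diagonals, so $EA=EC$ and $EB=ED$; moreover, the perpendicular bisector of $AC$ through $E$ is itself perpendicular to $AC$ and hence parallel to $BD$, so $\vec{EA}+\vec{EC}$ is parallel to $BD$, and likewise $\vec{EB}+\vec{ED}$ is parallel to $AC$.

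The key step is to prove $\angle AEB+\angle CED=180\degrees$. The two sums $\vec{EA}+\vec{EC}$ and $\vec{EB}+\vec{ED}$ are perpendicular by the previous paragraph, so $(\vec{EA}+\vec{EC})\cdot(\vec{EB}+\vec{ED})=0$. We also have $(\vec{EA}-\vec{EC})\cdot(\vec{EB}-\vec{ED})=\vec{CA}\cdot\vec{DB}=0$ because $AC\perp BD$. Expanding and adding these two identities gives $\vec{EA}\cdot\vec{EB}+\vec{EC}\cdot\vec{ED}=0$, and dividing by the common value $|EA|\cdot|EB|=|EC|\cdot|ED|$ yields $\cos\angle AEB+\cos\angle CED=0$, i.e., $\angle AEB+\angle CED=180\degrees$.

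To finish, I would use directed angles modulo $\pi$. Since $FG\perp EB$ and $FI\perp EA$, we have $\angle(FG,FI)\equiv\angle(EB,EA)\pmod{\pi}$; similarly $\angle(HG,HI)\equiv\angle(EC,ED)\pmod{\pi}$. The relation $\angle AEB+\angle CED=180\degrees$ is exactly the statement $\angle(EB,EA)\equiv\angle(EC,ED)\pmod{\pi}$, giving $\angle(FG,FI)\equiv\angle(HG,HI)\pmod{\pi}$, which is the inscribed-angle criterion for $F$, $G$, $H$, $I$ to lie on a common circle. The main subtlety, and the reason for using directed angles, is that $FGHI$ is not guaranteed to be convex, so the interior angle at $F$ could a priori equal either $\angle AEB$ or $180\degrees-\angle AEB$; working modulo $\pi$ sidesteps this ambiguity.
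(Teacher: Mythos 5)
Your overall strategy is sound and, notably, it is not the paper's: the paper gives no proof of Theorem~\ref{thm:odSteinX3} in the text (its verification is the exact symbolic computation referred to in the methodology section), and it explicitly poses the existence of a purely geometric proof as an open question immediately after the theorem. Your identification of the sides of $FGHI$ as the perpendicular bisectors of $EA$, $EB$, $EC$, $ED$, and the reduction to a directed-angle identity at $E$ via Proposition~\ref{prop:spOrtho}, is exactly the kind of synthetic argument being asked for.

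There is, however, one genuine gap. The unsigned relation $\angle AEB+\angle CED=180\degrees$ is \emph{not} equivalent to the directed congruence $\angle(EB,EA)\equiv\angle(EC,ED)\pmod{\pi}$. If $\angle AEB=\theta$, then $\angle(EB,EA)\equiv\pm\theta\pmod{\pi}$, while $\angle(EC,ED)\equiv\pm(\pi-\theta)\equiv\mp\theta\pmod{\pi}$; the unsigned relation is therefore compatible both with $\angle(EB,EA)\equiv\angle(EC,ED)$ and with $\angle(EB,EA)\equiv-\angle(EC,ED)$, and only the former yields concyclicity. The cosine computation discards precisely the orientation information you need, so the word ``exactly'' in your penultimate sentence is where the proof breaks. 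Fortunately the gap closes using ingredients you already have. Write $\alpha,\beta,\gamma,\delta$ for the directions of the rays $EA,EB,EC,ED$. Since $EA=EC$, the perpendicular bisector of $AC$ passes through $E$ and the midpoint of $AC$, and $\vec{EA}+\vec{EC}$ points along it, so this line has direction $\tfrac{\alpha+\gamma}{2}\pmod{\pi}$; similarly the perpendicular bisector of $BD$ has direction $\tfrac{\beta+\delta}{2}\pmod{\pi}$. Their perpendicularity (they are parallel to $BD$ and $AC$ respectively) gives $\tfrac{\alpha+\gamma}{2}\equiv\tfrac{\beta+\delta}{2}+\tfrac{\pi}{2}\pmod{\pi}$, hence $\alpha-\beta\equiv\delta-\gamma\pmod{\pi}$, which is precisely the congruence $\angle(EB,EA)\equiv\angle(EC,ED)\pmod{\pi}$ that your final paragraph requires. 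With that substitution (the dot-product paragraph can then be deleted entirely), your argument is correct and answers the paper's open question.
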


\begin{figure}[h!t]
\centering
\includegraphics[width=0.3\linewidth]{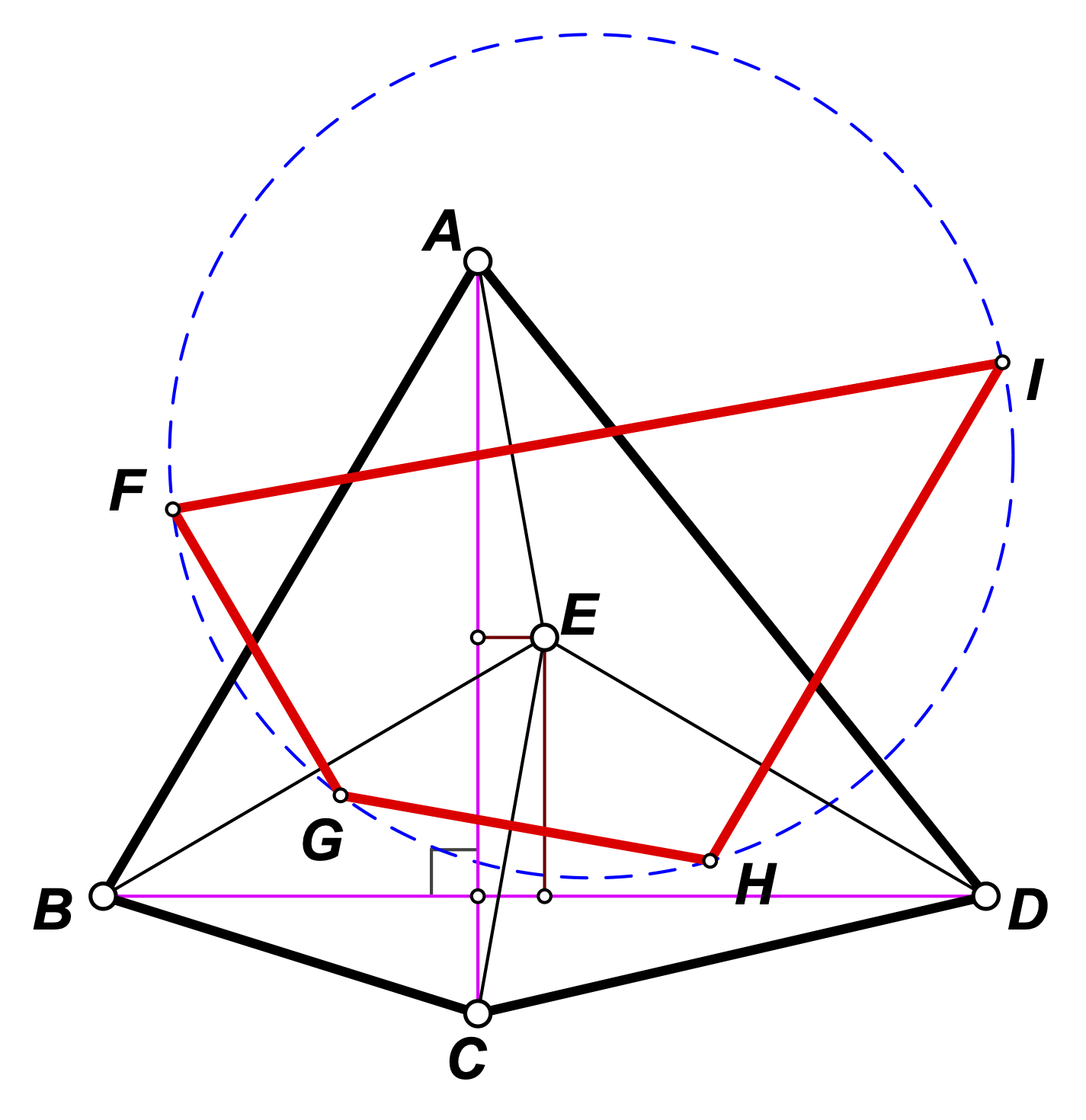}
\caption{orthodiagonal quadrilateral, $X_3$-points $\implies$ cyclic}
\label{fig:odSteinX3}
\end{figure}

\begin{open}
Is there a purely geometric proof of Theorem~\ref{thm:odSteinX3}?
\end{open}

\begin{open}
If the central quadrilateral is cyclic, must the center be the circumcenter?
\end{open}


\begin{theorem}
\label{thm:odSteinX4}
Let $E$ be the Steiner point of orthodiagonal quadrilateral $ABCD$.
Let $F$, $G$, $H$, and $I$ be the orthocenters of triangles $\triangle ABE$, $\triangle BCE$, $\triangle CDE$,
and $\triangle DAE$, respectively (Figure~\ref{fig:odSteinX3}).
Then $FHIG$ is a trapezoid with $FH\parallel GI$.
\end{theorem}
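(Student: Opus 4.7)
The plan is to reduce the statement to a short coordinate computation powered by Proposition~\ref{prop:spOrtho}, which says that the Steiner point $E$ of an orthodiagonal quadrilateral lies on the perpendicular bisector of each diagonal. Consequently $|EA|=|EC|$ and $|EB|=|ED|$. Since also $AC\perp BD$, I would rotate the plane so that the two diagonals are parallel to the coordinate axes and translate so that $E$ sits at the origin. The four vertices then take the highly symmetric form
\[
A=(p,q),\quad C=(-p,q),\quad B=(-s,t),\quad D=(-s,-t)
\]
for some reals $p,q,s,t$, and all of the hypotheses are encoded just by the sign pattern of these four points.

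Next I would record a compact vector formula for the orthocenter $H_t$ of a triangle $\triangle EXY$ whose first vertex is the origin. Setting $u=H_t$, the two altitude conditions $u\cdot(X-Y)=0$ and $(u-X)\cdot Y=0$ collapse to $u\cdot X=u\cdot Y=X\cdot Y$, so $u$ is the unique vector perpendicular to $X-Y$ whose dot product with $X$ equals $X\cdot Y$. Applying this in turn to $\triangle EAB,\triangle EBC,\triangle ECD,\triangle EDA$ produces
\[
F=\mu\,(q-t,\,-p-s),\ G=\nu\,(q-t,\,p-s),\ H=-\mu\,(q+t,\,p-s),\ I=\nu\,(-q-t,\,p+s),
\]
where $\mu=(sp-qt)/(pt+sq)$ and $\nu=(sp+qt)/(pt-sq)$.

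A direct coordinatewise subtraction then gives
\[
F-H=2\mu\,(q,-s)\quad\text{and}\quad G-I=2\nu\,(q,-s),
\]
so $\overrightarrow{FH}$ and $\overrightarrow{GI}$ are both scalar multiples of the single vector $(q,-s)$. Hence $FH\parallel GI$, and $FHIG$ is a trapezoid. Geometrically, $(q,-s)$ is perpendicular to the segment from the midpoint $(0,q)$ of $AC$ to the midpoint $(-s,0)$ of $BD$, so the two parallel sides of the trapezoid are perpendicular to the line through the midpoints of the diagonals of $ABCD$.

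The main obstacle is arranging the bookkeeping so that, after the four separate orthocenter computations, the subtraction $F-H$ really collapses to a multiple of the single direction $(q,-s)$; this cancellation is forced only by the sign symmetries $A_y=C_y$ and $B_x=D_x$ produced by the Steiner-point hypothesis, and would fail for a generic radiator. Minor degeneracies (for instance $\mu=0$, which happens when $\angle AEB=90\degrees$ and forces $F=E$, or $pt=\pm sq$, which sends one circumcircle to a line) cause individual central vertices to coincide but do not affect the parallelism, since the common direction $(q,-s)$ is intrinsic to $ABCD$.
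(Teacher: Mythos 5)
Your computation checks out. Writing $u$ for the orthocenter of $\triangle EXY$ with $E$ at the origin, the conditions $u\cdot X=u\cdot Y=X\cdot Y$ are the correct altitude equations, and with $A=(p,q)$, $C=(-p,q)$, $B=(-s,t)$, $D=(-s,-t)$ (which is exactly what Proposition~\ref{prop:spOrtho} licenses) your four scalars $\mu,\nu$ solve the resulting linear systems; I verified that $F-H=2\mu(q,-s)$ and $G-I=2\nu(q,-s)$, so the parallelism follows. This is a genuinely different route from the paper: the paper states Theorem~\ref{thm:odSteinX4} as a computer-discovered result whose only proof is an exact symbolic computation in barycentric coordinates relegated to the supplementary Mathematica notebooks, and it explicitly poses the existence of a transparent proof as an open question. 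Your argument, while still coordinate-based, is short enough to check by hand, makes visible \emph{why} the cancellation occurs (the sign symmetries $A_y=C_y$, $B_x=D_x$ forced by the Steiner point, which fail for a generic radiator), and yields extra information the paper does not record: the common direction of $FH$ and $GI$ is perpendicular to the segment joining the midpoints of the diagonals of $ABCD$, i.e., perpendicular to the Newton line. Two small points to tidy up: when $\angle AEB=90\degrees$ one gets $\mu=0$ and hence $F=H=E$, so the quadrilateral degenerates rather than merely ``not affecting the parallelism'' (the paper excludes such degenerate central quadrilaterals by convention); and you should note that $pt+qs=0$ or $pt-qs=0$ corresponds to $E$ lying on a side line of $ABCD$, which is excluded since $E$ is interior to a convex orthodiagonal quadrilateral.
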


\begin{figure}[h!t]
\centering
\includegraphics[width=0.5\linewidth]{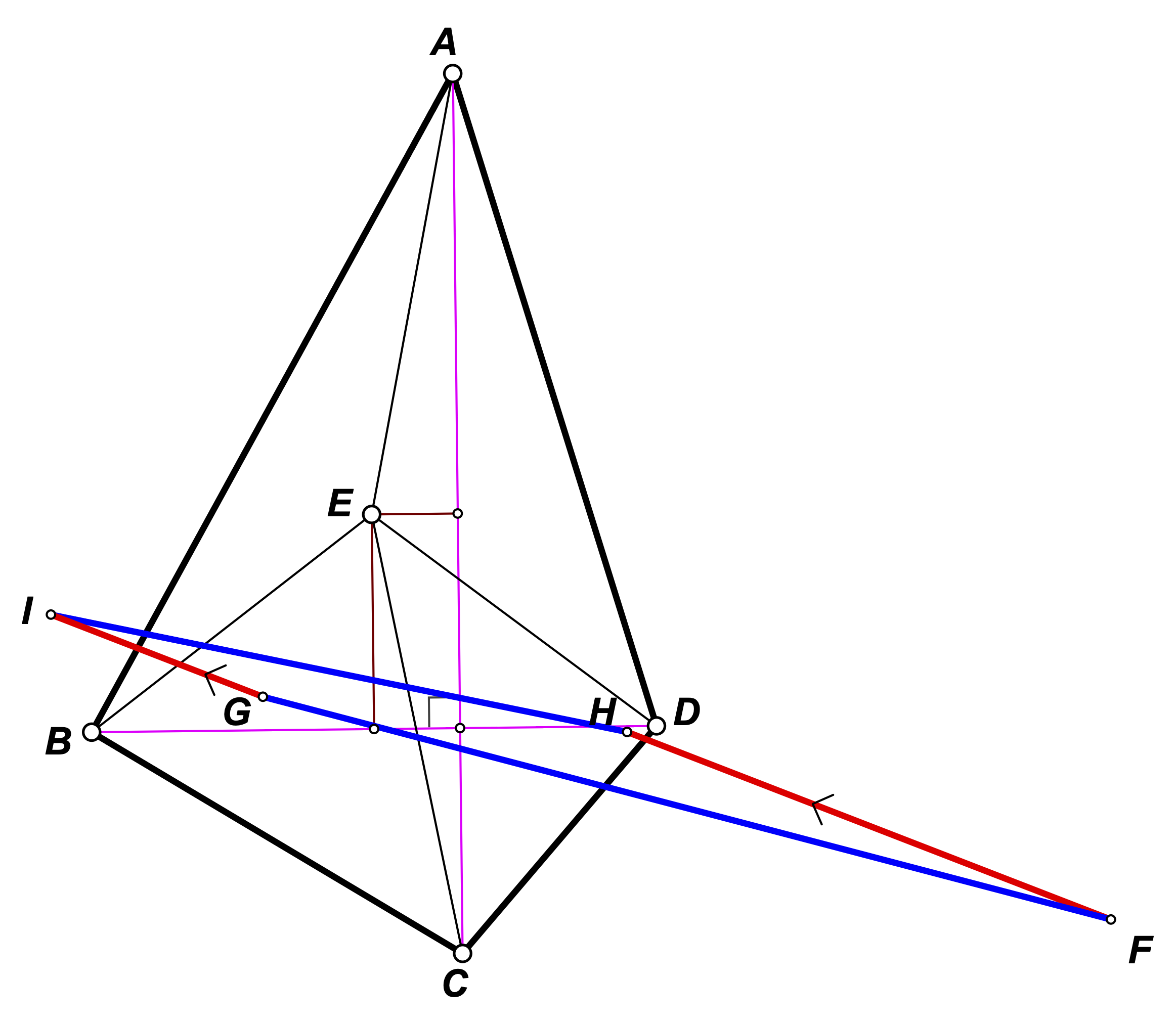}
\caption{orthodiagonal quadrilateral, $X_4$-points $\implies$ $FH\parallel GI$}
\label{fig:odSteinX4}
\end{figure}

\begin{open}
Is there a purely geometric proof of Theorem~\ref{thm:odSteinX4}?
\end{open}

\begin{open}
If the central quadrilateral is a trapezoid, must the center be the orthocenter?
\end{open}

Our computer study found some additional results for an equidiagonal orthodiagonal quadrilateral.
They are shown in the following table.

\bigskip
\begin{center}
\begin{tabular}{|l|p{3.3in}|}
\hline
\multicolumn{2}{|c|}{\textbf{\color{blue}\large \strut Central Quads of Equidiagonal Orthodiagonal  Quads}}\\ \hline
\textbf{Shape of central quad}&\textbf{centers}\\ \hline
\ru orthodiagonal&486, 487, 642 \\ \hline
\end{tabular}
\end{center}

\begin{theorem}
\label{thm:eoSteinX642}
Let $E$ be the Steiner point of equidiagonal orthodiagonal quadrilateral $ABCD$.
Let $n$ be 486, 487, or 642.
Let $F$, $G$, $H$, and $I$ be the $X_n$-points of triangles $\triangle ABE$, $\triangle BCE$, $\triangle CDE$,
and $\triangle DAE$, respectively.
Figure~\ref{fig:eoSteinX486} shows an example using the inner Vecten points ($n=486$).
Figure~\ref{fig:eoSteinX642} shows an example when $n=642$.
Then $FGHI$ is an orthodiagonal quadrilateral.
When $n=486$, the diagonals of $FGHI$ meet at point~$E$.
\end{theorem}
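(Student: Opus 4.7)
My plan is to reduce the theorem to a symbolic computation after first normalizing the configuration using Proposition~\ref{prop:spOrtho}. Take $E$ as the origin and let the perpendicular bisectors of the diagonals $AC$ and $BD$ be the coordinate axes; since $AC\perp BD$, those two bisectors are themselves perpendicular, so this choice is consistent. Combined with the equidiagonal hypothesis $AC=BD$, it yields the three-parameter parametrization
\[
A=(-s,t),\qquad B=(u,-s),\qquad C=(s,t),\qquad D=(u,s),
\]
with $|t|,|u|<s$ guaranteeing convexity and with $E=(0,0)$ playing its required role as the Steiner point.

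Next, for each of the four radial triangles I would substitute its vertices into the barycentric-coordinate formula for $X_n$ (as listed in the Encyclopedia of Triangle Centers) and convert back to Cartesian coordinates, obtaining closed-form expressions for $F$, $G$, $H$, $I$ as functions of $s,t,u$, generally carrying side-length radicals of the relevant radial triangle. Orthodiagonality of $FGHI$ is then the single scalar identity
\[
(H-F)\cdot(I-G)=0,
\]
which I would verify symbolically in Mathematica after rationalization. For the case $n=486$ I would additionally check that $E=(0,0)$ lies on both diagonals $FH$ and $GI$, equivalently that the $2\times 2$ determinants $\det[F\ H]$ and $\det[G\ I]$ vanish.

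The main obstacle will be expression size: each Vecten-type center drags along the side lengths of a scalene radial triangle, so the coordinates of $F,G,H,I$ involve several nested radicals, and the four radial triangles share no useful symmetry one could exploit to cut the work by a factor of two. After squaring away the radicals and clearing denominators, however, the required identities should reduce to polynomial identities in $s,t,u$ that a computer-algebra system can dispatch directly. A purely synthetic proof in the spirit of Theorem~\ref{thm:cqSteinX3} would be more satisfying but is not apparent, since the three centers $X_{486}$, $X_{487}$, $X_{642}$ do not share an obvious common construction that interacts transparently with the equidiagonal--orthodiagonal structure; finding one would be a natural follow-up question.
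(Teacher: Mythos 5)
Your proposal is sound and matches the paper's own treatment: the paper gives no in-text proof of Theorem~\ref{thm:eoSteinX642}, deferring (per Section~\ref{section:methodology}) to exact symbolic computation in the supplementary Mathematica notebooks, and your normalization via Proposition~\ref{prop:spOrtho} followed by verification of $(H-F)\cdot(I-G)=0$ (plus the collinearity determinants for $n=486$) is exactly that kind of computation with a well-chosen coordinate frame. One small simplification: the centers $X_{486}$, $X_{487}$, $X_{642}$ all belong to the Vecten family, whose barycentrics are rational in $S_A$, $S_B$, $S_C$, and $S$, so $F$, $G$, $H$, $I$ are rational functions of $s$, $t$, $u$ and the feared nested radicals never appear.
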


\begin{figure}[h!t]
\centering
\includegraphics[width=0.35\linewidth]{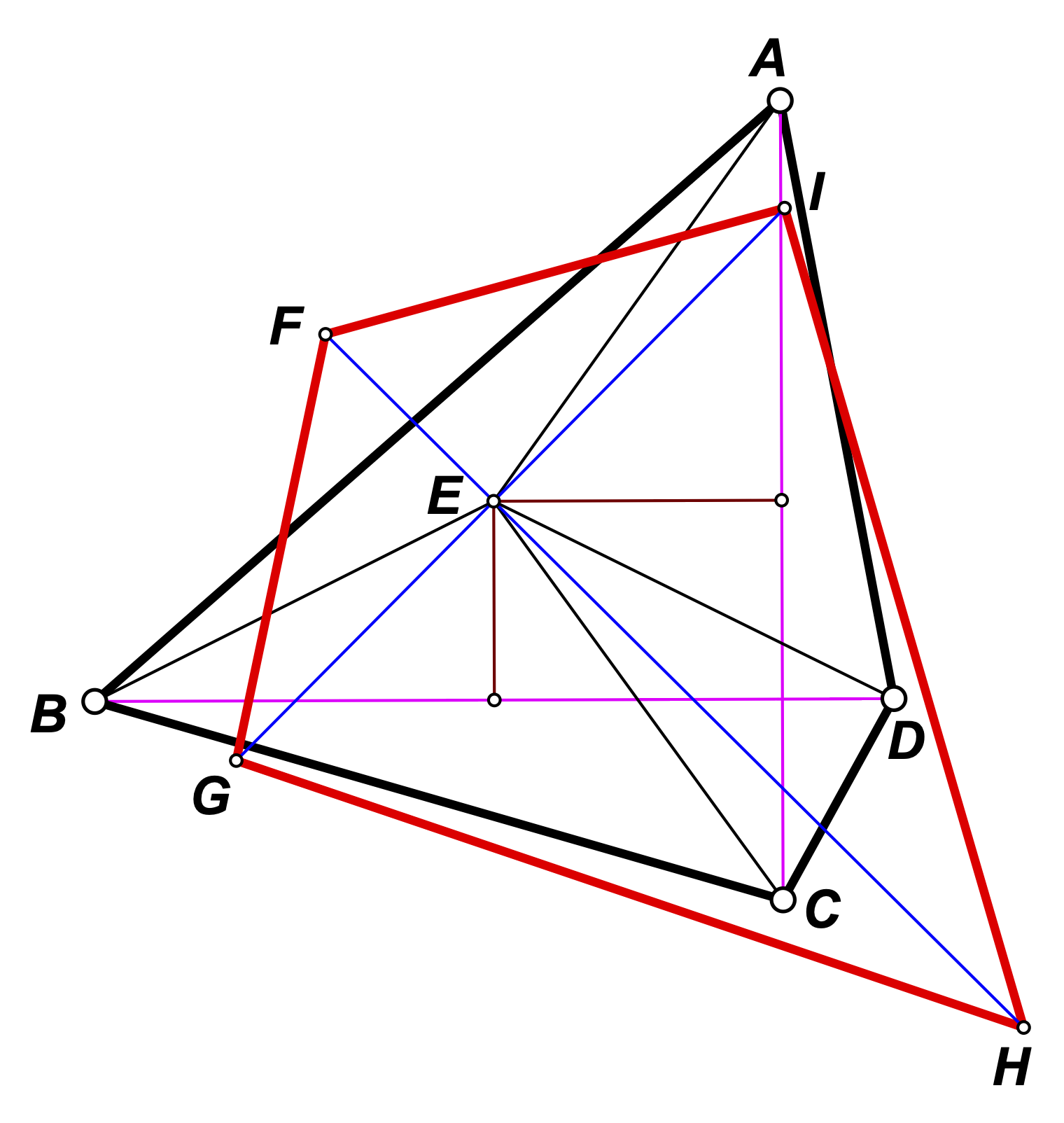}
\caption{equi-ortho-quad, $X_{486}$-points $\implies$ orthodiagonal}
\label{fig:eoSteinX486}
\end{figure}

\begin{figure}[h!t]
\centering
\includegraphics[width=0.35\linewidth]{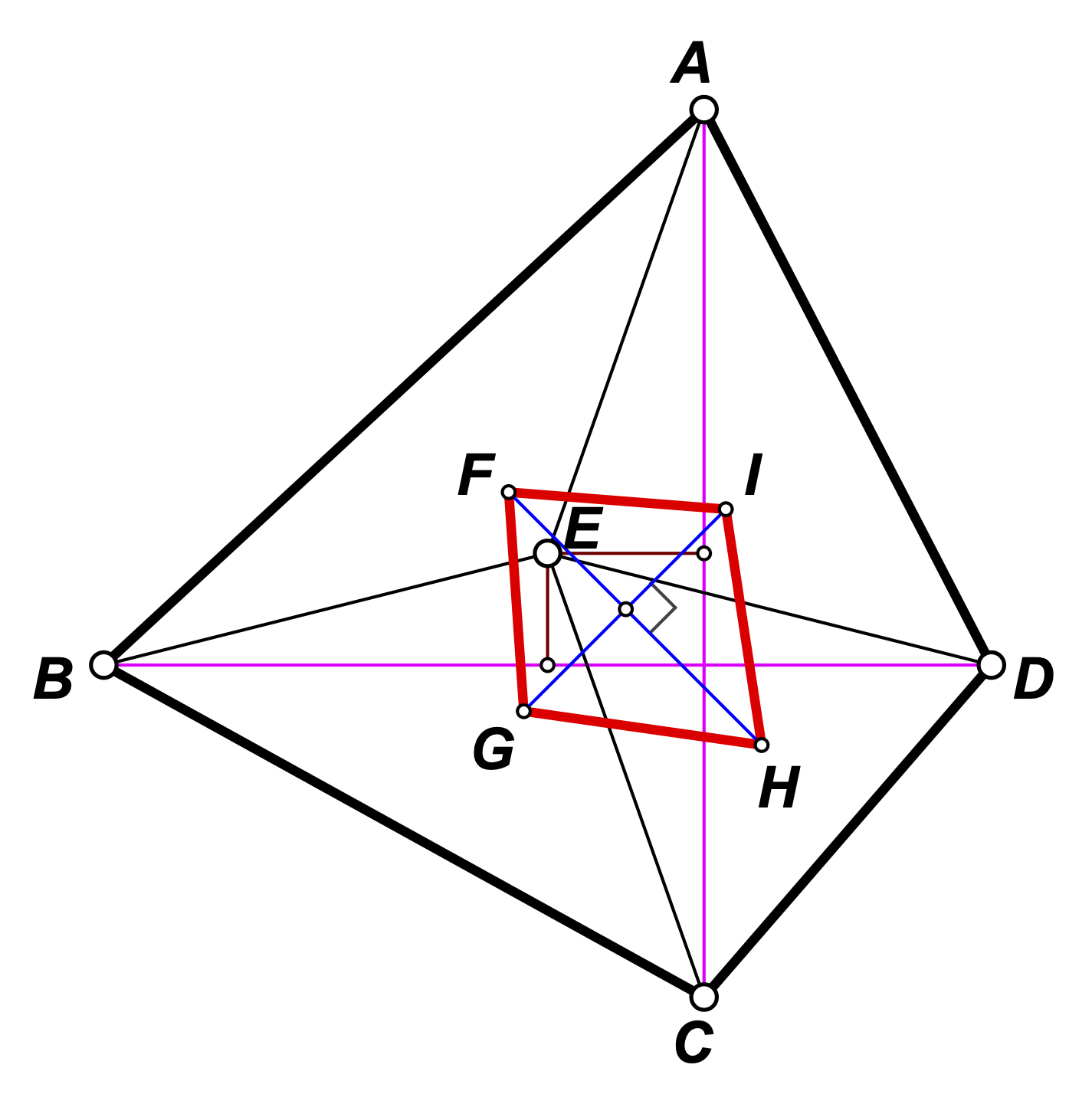}
\caption{equi-ortho-quad, $X_{642}$-points $\implies$ orthodiagonal}
\label{fig:eoSteinX642}
\end{figure}

\newpage
Our computer study found some additional results for bicentric quadrilaterals.
They are shown in the following table.

\bigskip
\begin{center}
\begin{tabular}{|l|p{3.3in}|}
\hline
\multicolumn{2}{|c|}{\textbf{\color{blue}\large \strut Central Quads of Bicentric  Quadrilaterals}}\\ \hline
\textbf{Shape of central quad}&\textbf{centers}\\ \hline
\ru cyclic&1, 165, 214\\ \hline
\end{tabular}
\end{center}

\begin{theorem}
\label{thm:biSteinX1}
Let $E$ be the Steiner point of bicentric quadrilateral $ABCD$.
Let $F$, $G$, $H$, and $I$ be the incenters of triangles $\triangle ABE$, $\triangle BCE$, $\triangle CDE$,
and $\triangle DAE$, respectively (Figure~\ref{fig:biSteinX1}).
Then $FGHI$ is a cyclic quadrilateral.
\end{theorem}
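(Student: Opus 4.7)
The plan is to place $E$ at the origin, write each incenter in closed form as a weighted average of two vertices of $ABCD$, and then translate the concyclicity of $F, G, H, I$ into an overdetermined linear system whose unique consistency condition is exactly the tangential property of $ABCD$.

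Since $ABCD$ is bicentric it is cyclic, so by Proposition~\ref{prop:circumcenter} $E$ is the circumcenter of $ABCD$. Take $E = 0$, let $R$ be the circumradius, and view $A, B, C, D$ as complex numbers with $|A| = |B| = |C| = |D| = R$. Write the central angles at $E$ as $2\alpha_1, 2\alpha_2, 2\alpha_3, 2\alpha_4$, so $\alpha_1 + \alpha_2 + \alpha_3 + \alpha_4 = \pi$ and $AB = 2R\sin\alpha_1$, $BC = 2R\sin\alpha_2$, etc. Each radial triangle is isosceles with apex $E$ and legs of length $R$, so the barycentric incenter formula $(aX + bY + cZ)/(a + b + c)$ applied to $\triangle EAB$ collapses (the apex term vanishes because it is multiplied by $E = 0$) to
\[ F \;=\; \frac{A + B}{2(1 + \sin\alpha_1)}, \]
with analogous formulas for $G, H, I$. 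A short calculation also yields $|F|^2 = (1 - \sin\alpha_1)/(1 + \sin\alpha_1)$.

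Substituting these four points into the generic circle equation $|z|^2 - 2\vec c \cdot z + \kappa = 0$ and clearing the denominators $1 + \sin\alpha_k$ converts the concyclicity requirement into the linear system
\[ \vec c \cdot V_k \;=\; (\kappa + 1) + (\kappa - 1)\sin\alpha_k, \qquad k = 1, 2, 3, 4, \]
where $V_1 = A+B$, $V_2 = B+C$, $V_3 = C+D$, $V_4 = D+A$. The critical observation is the telescoping identity $V_1 - V_2 + V_3 - V_4 = 0$, which is automatic, so taking the alternating sum of the four equations annihilates the left-hand side and leaves
\[ 0 \;=\; (\kappa - 1)\bigl(\sin\alpha_1 - \sin\alpha_2 + \sin\alpha_3 - \sin\alpha_4\bigr). \]
The tangential hypothesis $AB + CD = BC + DA$ translates to $\sin\alpha_1 + \sin\alpha_3 = \sin\alpha_2 + \sin\alpha_4$, so the second factor vanishes and the overdetermined $4 \times 3$ system becomes consistent; the three remaining independent equations then determine $(\vec c, \kappa)$ uniquely, giving a circle through $F, G, H, I$.

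The main obstacle is finding the setup that linearises the problem: the decisive ingredients are the collapsed incenter formula and the trick of multiplying each circle equation by $1 + \sin\alpha_k$, after which the telescoping identity does all the work. A small residual worry---ruling out the degenerate case where the solution encodes a single point rather than a genuine circle---is dispatched by noting that the four incenters lie on four distinct rays from $E$ (pointing toward the midpoints of the four sides of $ABCD$), hence cannot all coincide.
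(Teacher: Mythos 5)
Your argument is essentially correct in all of its computations, and it is genuinely different from what the paper does: the paper offers no proof of Theorem~\ref{thm:biSteinX1} at all, deferring instead to the references \cite{knot1}, \cite{knot2}, \cite{Oai} and to Zaslavsky's purely geometric proof \cite{Zaslavsky}. Your route is a self-contained analytic one. The key steps all check out: since each radial triangle is isosceles with legs $R$, the incenter of $\triangle EAB$ is $R(A+B)/(AB+2R)=(A+B)/\bigl(2(1+\sin\alpha_1)\bigr)$, one gets $|F|^2=R^2(1-\sin\alpha_1)/(1+\sin\alpha_1)$ (you dropped the factor $R^2$, so you are implicitly normalizing $R=1$), multiplying the circle equation by $1+\sin\alpha_k$ produces exactly the linear system you state, the telescoping identity $V_1-V_2+V_3-V_4=0$ is immediate, and the Pitot condition $AB+CD=BC+DA$ is precisely $\sin\alpha_1+\sin\alpha_3=\sin\alpha_2+\sin\alpha_4$. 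What your approach buys is a transparent explanation of \emph{why} tangentiality is the relevant hypothesis: it is exactly the condition that makes the alternating sum of the four circle equations vanish. What Zaslavsky's synthetic proof buys is independence from coordinates and, presumably, extra geometric information about the resulting circle.

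Two loose ends deserve attention. First, checking the single null vector $(1,-1,1,-1)$ gives a \emph{necessary} condition for consistency; to get sufficiency you assert that ``the three remaining independent equations determine $(\vec c,\kappa)$ uniquely,'' which is the unproved claim that $F$, $G$, $H$ are not collinear. The cleaner packaging avoids this entirely: after multiplying row $k$ of the $4\times 4$ concyclicity matrix with rows $\bigl(|F_k|^2,\,F_{k,x},\,F_{k,y},\,1\bigr)$ by $2(1+\sin\alpha_k)>0$, the row becomes $\bigl(2(1-\sin\alpha_k),\,V_{k,x},\,V_{k,y},\,2(1+\sin\alpha_k)\bigr)$, and your alternating sum kills \emph{every} entry under the tangential hypothesis; hence the determinant vanishes and the four points lie on a common circle or line, with no rank assumption needed. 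Second, your disposal of the degenerate case leans on the claim that the four incenters lie on four distinct rays from $E$; this can actually fail, because a bicentric quadrilateral can have its circumcenter outside the quadrilateral (take central half-angles $\alpha_1=\alpha_3=0.4$, $\alpha_2\approx 0.0365$, $\alpha_4=\pi-(\alpha_1+\alpha_2+\alpha_3)>\pi/2$: the Pitot condition holds and the rays toward the midpoints of $BC$ and $DA$ coincide). The points are still pairwise distinct there (the radii $R\lvert\cos\alpha_k\rvert/(1+\sin\alpha_k)$ differ), so the conclusion survives, but ruling out the line/point degeneracies needs an argument that does not presuppose $E$ interior to $ABCD$ or to $FGHI$.
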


\begin{figure}[h!t]
\centering
\includegraphics[width=0.5\linewidth]{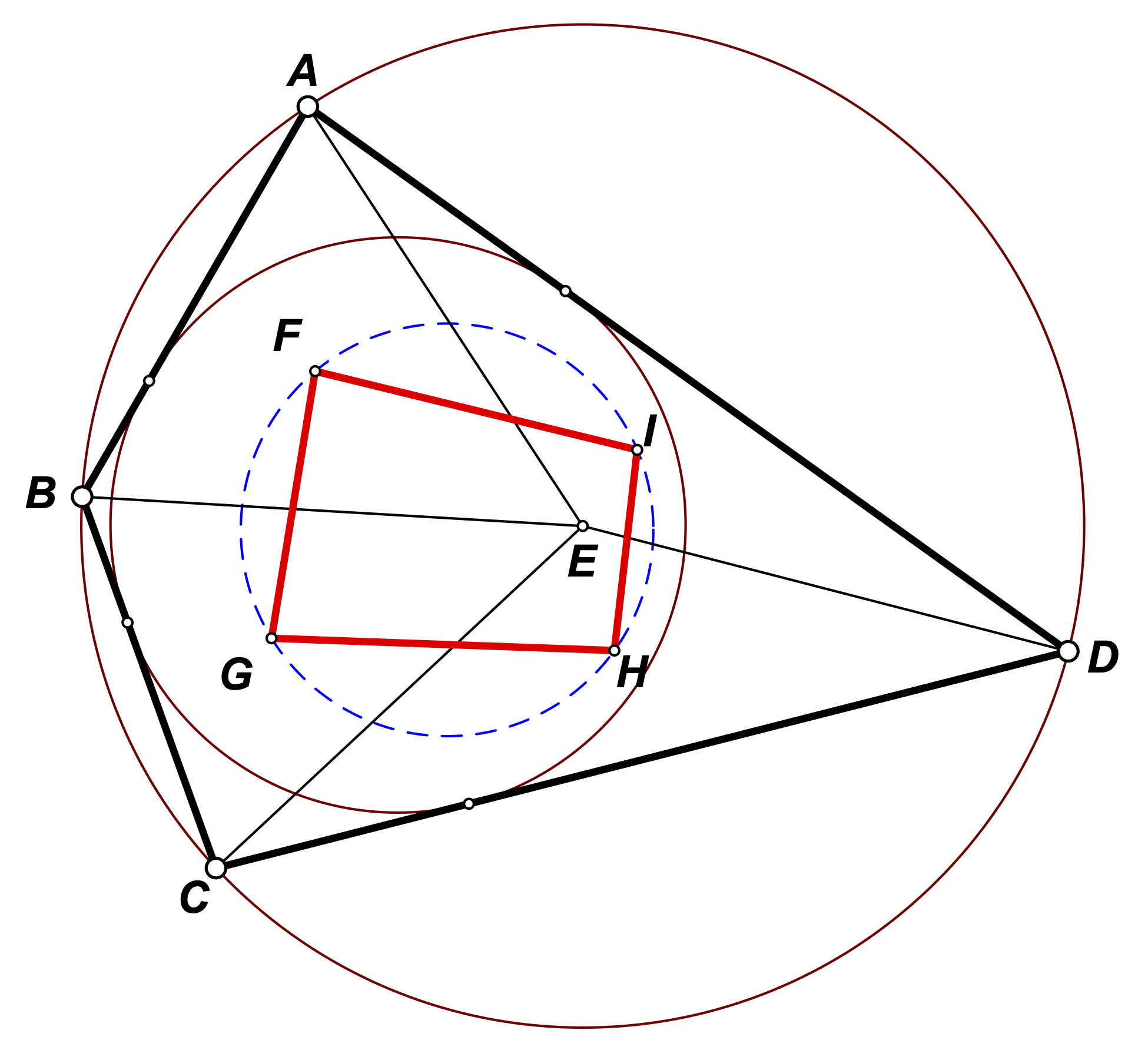}
\caption{bicentric quadrilateral, $X_1$-points $\implies$ cyclic}
\label{fig:biSteinX1}
\end{figure}

For more information about this result, see \cite{knot1}, \cite{knot2}, and \cite{Oai}.
A purely geometric proof can be found in \cite{Zaslavsky}.


\begin{lemma}
\label{lemma:X214}
The $X_{214}$-point of a triangle is the midpoint of $X_1$ and $X_{100}$.
\end{lemma}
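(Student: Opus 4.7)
The plan is a direct coordinate verification, consistent with the barycentric methodology declared in Section~3 of the paper. First I would write down the barycentric coordinates of the three centers, as catalogued in Kimberling's Encyclopedia of Triangle Centers:
$X_1 = (a : b : c)$,
$X_{100} = \left(\dfrac{a}{b-c} : \dfrac{b}{c-a} : \dfrac{c}{a-b}\right)$,
and $X_{214} = \bigl(a(b+c) - (b-c)^2 : b(c+a) - (c-a)^2 : c(a+b) - (a-b)^2\bigr)$ (using the standard ETC forms). After clearing denominators, the coordinates of $X_{100}$ can be written as $\bigl(a(c-a)(a-b) : b(a-b)(b-c) : c(b-c)(c-a)\bigr)$.

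Next I would pass to absolute (normalized) barycentric coordinates by dividing each representative by the sum of its entries. For $X_1$ the normalizing factor is $a+b+c$. For $X_{100}$ the normalizing factor is the symmetric sum $a(c-a)(a-b) + b(a-b)(b-c) + c(b-c)(c-a)$, which expands and simplifies to a polynomial $\Sigma(a,b,c)$ that I would compute explicitly (the expression is symmetric in the required sense and nonzero for a generic triangle).

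With both points in affine form, the midpoint is obtained by averaging component-wise. The claim then reduces to showing that the triple
\[
\frac{1}{2}\!\left(\frac{a}{a+b+c}+\frac{a(c-a)(a-b)}{\Sigma}\,,\ \frac{b}{a+b+c}+\frac{b(a-b)(b-c)}{\Sigma}\,,\ \frac{c}{a+b+c}+\frac{c(b-c)(c-a)}{\Sigma}\right)
\]
is proportional to $\bigl(a(b+c)-(b-c)^2 : b(c+a)-(c-a)^2 : c(a+b)-(a-b)^2\bigr)$. By cross-multiplication this becomes a cyclic symmetric polynomial identity in $a,b,c$, which is then checked by expansion.

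The main obstacle is bookkeeping rather than geometry: one has to carry the normalization factor for $X_{100}$ through the calculation and verify a nontrivial polynomial identity in $a,b,c$. This is precisely the kind of routine symbolic check the paper delegates to Mathematica, and the verification can be placed in the supplementary notebook. The result is essentially the characterizing property of $X_{214}$ recorded in ETC, so no new geometric insight is required — the proof is a single symbolic computation.
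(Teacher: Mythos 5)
Your overall strategy (normalize $X_1$ and $X_{100}$, average, and match against the catalogued coordinates of $X_{214}$) is sound, and it is genuinely different from what the paper does: the paper offers no computation at all, simply citing the ETC entry for $X(214)$, where the midpoint property is recorded. So a symbolic verification like yours would actually add content. However, there is a concrete error in your input data: the barycentric coordinates you quote for $X_{214}$, namely $\bigl(a(b+c)-(b-c)^2 : b(c+a)-(c-a)^2 : c(a+b)-(a-b)^2\bigr)$, are not the coordinates of $X_{214}$. The ETC barycentrics are
\[
X_{214} \;=\; \Bigl(a(2a-b-c)(a^2-b^2+bc-c^2) \;:\; b(2b-c-a)(b^2-c^2+ca-a^2) \;:\; c(2c-a-b)(c^2-a^2+ab-b^2)\Bigr).
\]
A numerical check exposes the discrepancy: for $(a,b,c)=(6,5,4)$ the midpoint of the normalized $X_1=(6,5,4)/15$ and $X_{100}=(-12,5,-8)/(-15)$ is $(3:0:2)$, which agrees with the displayed formula (it gives $(-270:0:-180)$), whereas your triple evaluates to $(53:46:43)$. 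So the ``cyclic symmetric polynomial identity'' you propose to verify by expansion is false as stated, and the Mathematica check would fail.

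The fix is mechanical: replace your $X_{214}$ coordinates by the correct ones above. Then the identity does hold — with $\Sigma = \sum_{\mathrm{cyc}} a(c-a)(a-b)$ one finds
\[
a\Bigl[\Sigma + (c-a)(a-b)(a+b+c)\Bigr] \;=\; a(2a-b-c)(b^2+c^2-a^2-bc),
\]
and cyclically, so the midpoint of $X_1$ and $X_{100}$ has exactly the barycentrics of $X_{214}$. One further caveat: since ETC essentially records the midpoint property as a known attribute of $X(214)$, your computation is really a consistency check of the catalogued coordinates rather than an independent theorem; that is fine for the paper's purposes (and matches its citation-style proof), but you should state which coordinate representation of $X_{214}$ you are taking as the definition.
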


\begin{proof}
See \cite{ETC214}.
\end{proof}

\void{
\begin{lemma}
\label{lemma:X100}
The $X_{100}$-point of an isosceles triangle coincides with the vertex,
\end{lemma}

Possible geometric proof:

\begin{proof}
From \cite{ETC100}, we know that $X_{100}$ lies on the circumcircle.
All triangle centers of an isosceles triangle lie on the median to the base.
Thus, \textcolor{red}{(sort of)} $X_{100}$ must coincide with the vertex.
\end{proof}

\msg{X(100) could also coincide with the reflection of A wrt the circumcenter}

Analytic proof:

\begin{proof}
From \cite{ETC100}, the barycentric coordinates for $X_{100}$ are $$\Bigl(a (a - b) (-a + c): (a - b) b (b - c):(b - c) c (-a + c)\Bigr).$$
When $b=c$, this has the form $(p:0:0)$ which is vertex $A$.
\end{proof}
}

\newpage

\begin{theorem}
\label{thm:biSteinX214}
Let $E$ be the Steiner point of bicentric quadrilateral $ABCD$.
Let $F$, $G$, $H$, and $I$ be the $X_{214}$-points of triangles $\triangle ABE$, $\triangle BCE$, $\triangle CDE$,
and $\triangle DAE$, respectively.
Then $FGHI$ is a cyclic quadrilateral.
\end{theorem}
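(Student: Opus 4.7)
The plan is to reduce Theorem~\ref{thm:biSteinX214} to Theorem~\ref{thm:biSteinX1} by showing that $FGHI$ is the image of the ``incenter'' central quadrilateral under a homothety centered at~$E$.

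First I would exploit the bicentric hypothesis. Since $ABCD$ is in particular cyclic, Proposition~\ref{prop:circumcenter} tells us the Steiner point $E$ is the circumcenter of $ABCD$, so $EA=EB=EC=ED$. Consequently each radial triangle is isosceles with apex at $E$: in $\triangle ABE$ the two equal sides $EA$ and $EB$ meet at $E$, and likewise for the other three triangles. Applying Lemma~\ref{lemma:dpIsoscelesTriangleA} with $n=100$ (note that $100$ appears in the list of indices for which $X_n$ coincides with the apex), the $X_{100}$-point of every radial triangle therefore coincides with $E$.

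Next I would invoke Lemma~\ref{lemma:X214}, which identifies $X_{214}$ as the midpoint of $X_1$ and $X_{100}$. Let $F'$, $G'$, $H'$, $I'$ denote the incenters ($X_1$-points) of $\triangle ABE$, $\triangle BCE$, $\triangle CDE$, $\triangle DAE$, respectively. Combined with the previous step, this yields that $F$, $G$, $H$, $I$ are the midpoints of the segments $EF'$, $EG'$, $EH'$, $EI'$. Equivalently, $FGHI$ is the image of $F'G'H'I'$ under the homothety centered at $E$ with ratio $1/2$.

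Finally, Theorem~\ref{thm:biSteinX1} asserts that $F'G'H'I'$ is cyclic. Because a homothety sends circles to circles, the image circle of the circumcircle of $F'G'H'I'$ passes through $F$, $G$, $H$, $I$, so $FGHI$ is cyclic. The only step that requires any care is confirming $X_{100}=E$ for each radial triangle, which is not a real obstacle since it is an immediate instance of Lemma~\ref{lemma:dpIsoscelesTriangleA}; once that identification is in place, the homothety argument is entirely routine.
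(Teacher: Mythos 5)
Your proposal is correct and follows essentially the same route as the paper's own proof: identify $X_{100}$ of each isosceles radial triangle with the apex $E$ via Lemma~\ref{lemma:dpIsoscelesTriangleA}, use Lemma~\ref{lemma:X214} to realize $F$, $G$, $H$, $I$ as midpoints of $EF'$, $EG'$, $EH'$, $EI'$, and transfer cyclicity from Theorem~\ref{thm:biSteinX1} by the homothety of ratio $\tfrac12$ centered at $E$. No gaps.
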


\begin{figure}[h!t]
\centering
\includegraphics[width=0.4\linewidth]{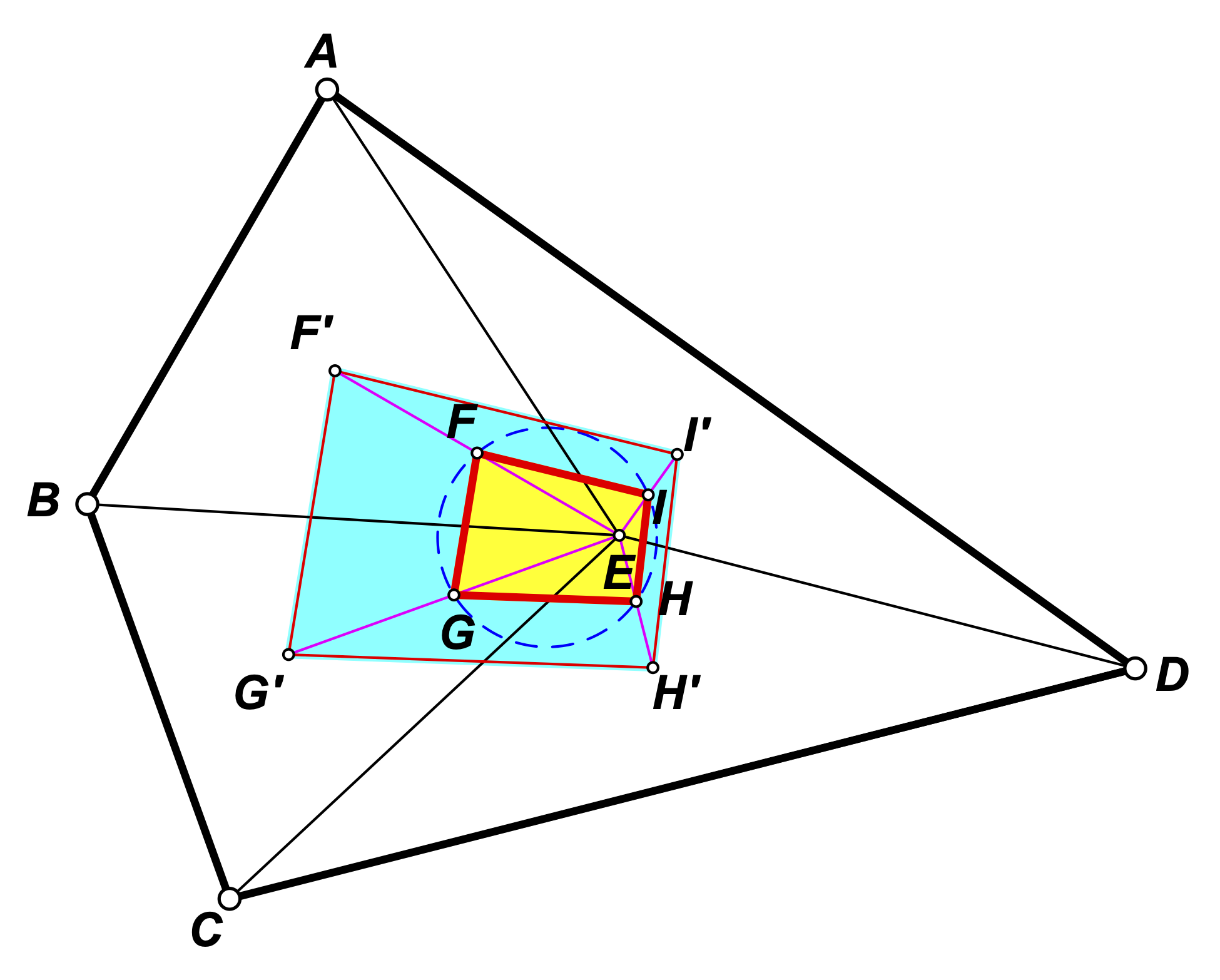}
\caption{bicentric quadrilateral, $X_{214}$-points $\implies$ cyclic}
\label{fig:biSteinX214}
\end{figure}

\begin{proof}
Let $F'$, $G'$, $H'$, and $I'$ be the $X_{1}$-points of triangles $\triangle ABE$, $\triangle BCE$, $\triangle CDE$,
and $\triangle DAE$, respectively.\\
Let $F$ be the midpoint of $EF'$.\\
Proposition~\ref{prop:circumcenter} $\implies$ $E$ is the circumcenter of $ABCD$.\\
$EA=EB$ $\implies$ $EAB$ isosceles triangle
$\implies$ $F'$ lies on  median to  base, $AB$.\\
Lemma~\ref{lemma:dpIsoscelesTriangleA} $\implies$ $E$ is the $X_{100}$-point of $\triangle EAB$.\\
Lemma~\ref{lemma:X214} $\implies$ $F$ is the $X_{214}$-point of triangle $\triangle EAB$.\\
A dilation of $\frac12$ about $E$ maps $F'$ to $F$.\\
Similarly, this dilation maps $F'G'H'I'$ to $FGHI$.\\
Theorem~\ref{thm:biSteinX1} $\implies$ $F'G'H'I'$ is cyclic.\\
Thus, $FGHI$ is homothetic to $F'G'H'I'$ and is therefore also cyclic. 
\end{proof}


\begin{lemma}
\label{lemma:X165}
The $X_{165}$-point of a triangle lies on the line $X_1X_3$ and $X_1X_3=3X_3X_{165}$.
\end{lemma}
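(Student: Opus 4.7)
The plan is to identify $X_{165}$ via its classical definition as the centroid of the excentral triangle $I_aI_bI_c$ of $\triangle ABC$, and then apply Euler-line relations within that triangle. Recall the standard identifications (documented in Kimberling's ETC and in standard references on triangle geometry): with respect to the excentral triangle $I_aI_bI_c$, the incenter $X_1$ of $\triangle ABC$ serves as the orthocenter, the circumcenter $X_3$ of $\triangle ABC$ serves as the nine-point center, and the Bevan point $X_{40}$ serves as the circumcenter.

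Once these roles are recognized, the points $X_1$, $X_3$, $X_{165}$, $X_{40}$ all lie on the Euler line of $\triangle I_aI_bI_c$, which already establishes the collinearity of $X_1$, $X_3$, $X_{165}$. The usual Euler-line relations in the excentral triangle give $X_3=\tfrac12(X_1+X_{40})$ (nine-point center is the midpoint of orthocenter and circumcenter) and $X_{165}=\tfrac13 X_1+\tfrac23 X_{40}$ (centroid divides the orthocenter-to-circumcenter segment in ratio $2{:}1$).

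Substituting $X_{40}=2X_3-X_1$ from the first relation into the second yields $X_{165}=-\tfrac13 X_1+\tfrac43 X_3$, i.e. $X_{165}-X_3=\tfrac13(X_3-X_1)$. This shows that $X_{165}$ lies on line $X_1X_3$, on the opposite side of $X_3$ from $X_1$, and that $X_3X_{165}=\tfrac13\,X_1X_3$. Taking magnitudes gives $X_1X_3=3\,X_3X_{165}$, as claimed.

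The main obstacle here is not the arithmetic but justifying the three identifications of $X_1$, $X_3$, $X_{40}$ as orthocenter, nine-point center, and circumcenter of the excentral triangle; these are classical, and in the paper one would simply cite ETC. If a self-contained derivation is preferred, one can instead start from the barycentric coordinates of $X_1$, $X_3$, and $X_{165}$ given in ETC, normalize them to affine barycentrics, and verify the collinearity and the $3{:}1$ length ratio by a direct symbolic calculation—the kind of computation the authors are already performing in Mathematica elsewhere in the paper.
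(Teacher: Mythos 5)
Your proof is correct, but it takes a genuinely different route from the paper's. The paper simply cites ETC for the collinearity of $X_1$, $X_3$, $X_{165}$ and then establishes the ratio by brute force: it writes down the barycentric coordinates of the three points, computes $\overline{X_1X_3}^2$ and $\overline{X_3X_{165}}^2$ symbolically, and observes that their quotient is $9$. You instead exploit the classical identification of $X_{165}$ as the centroid of the excentral triangle $I_aI_bI_c$, together with the facts that $X_1$, $X_3$, $X_{40}$ are respectively the orthocenter, nine-point center, and circumcenter of that triangle (all correct: $ABC$ is the orthic triangle of $I_aI_bI_c$, so its circumcircle is the excentral triangle's nine-point circle), and then read the result off the Euler-line relations $N=\tfrac12(H+O)$ and $G=\tfrac13 H+\tfrac23 O$. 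Your computation $X_{165}-X_3=\tfrac13(X_3-X_1)$ checks out. What your approach buys is a conceptual, computation-free argument that additionally pins down the \emph{directed} ratio (showing $X_3$ lies between $X_1$ and $X_{165}$), which the paper's squared-distance calculation cannot distinguish; what it costs is the need to cite the three excentral-triangle identifications, whereas the paper's method is self-contained modulo the ETC collinearity fact and matches the symbolic-computation style used throughout the rest of the paper. Either proof suffices for the lemma as stated.
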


\begin{proof}
From \cite{ETC165}, we have that $X_{165}$ lies on the line $X_1X_3$.
Using the barycentric coordinates for $X_1$, $X_3$, and $X_{165}$
along with the distance formula between two points, we find that
$$\overline{X_1X_3}^2=-\frac{a b c \left(a^3-a^2 b-a^2 c-a
   b^2+3 a b c-a c^2+b^3-b^2 c-b
   c^2+c^3\right)}{(a-b-c) (a+b-c)
   (a-b+c) (a+b+c)}$$
and
$$\overline{X_3X_{165}}^2=-\frac{a b c \left(a^3-a^2 b-a^2 c-a
   b^2+3 a b c-a c^2+b^3-b^2 c-b
   c^2+c^3\right)}{9 (a-b-c) (a+b-c)
   (a-b+c) (a+b+c)}.$$
Thus, $$\left(\frac{X_1X_3}{X_3X_{165}}\right)^2=9$$
and we are done.
\end{proof}

\begin{theorem}
\label{thm:biSteinX165}
Let $E$ be the Steiner point of bicentric quadrilateral $ABCD$.
Let $F$, $G$, $H$, and $I$ be the $X_{165}$-points of triangles $\triangle ABE$, $\triangle BCE$, $\triangle CDE$,
and $\triangle DAE$, respectively.
Then $FGHI$ is a cyclic quadrilateral.
\end{theorem}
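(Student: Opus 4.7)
The plan is to exhibit $FGHI$ as the image of the incenter-quadrilateral $F_1G_1H_1I_1$ of Theorem~\ref{thm:biSteinX1} under a fixed inversion centered at $E$, and then invoke the conformal property of inversion.

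By Proposition~\ref{prop:circumcenter}, $E$ is the circumcenter of $ABCD$, so $EA=EB=EC=ED=R$, and each radial triangle is isosceles with apex $E$. In particular, every triangle center of each radial triangle lies on its axis of symmetry---the ray from $E$ through the midpoint of the corresponding side of $ABCD$.

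To pin down $X_{165}$ I will use the relation $X_{165} = \tfrac{4}{3}X_3 - \tfrac{1}{3}X_1$. This follows because $X_{165}$ is the centroid of the excentral triangle, whose orthocenter is $X_1$ and whose circumcenter is the Bevan point $X_{40} = 2X_3 - X_1$; the Euler-line identity $3(\text{centroid})=(\text{orthocenter})+2(\text{circumcenter})$ then gives $3X_{165} = X_1+2X_{40} = 4X_3-X_1$. (This is consistent with Lemma~\ref{lemma:X165} and fixes the correct signed direction along $X_1X_3$.) In the isosceles radial triangle $\triangle ABE$ with apex half-angle $\delta$ (so $AB=2R\sin\delta$), a direct calculation along the axis of symmetry yields
\[
|EX_1| = \frac{R\cos\delta}{1+\sin\delta}, \qquad |EX_3| = \frac{R}{2\cos\delta},
\]
and therefore
\[
|EX_{165}| = \tfrac{4}{3}|EX_3| - \tfrac{1}{3}|EX_1| = \frac{R(1+\sin\delta)}{3\cos\delta}.
\]
The crucial identity is then
\[
|EX_{165}|\cdot|EX_1| = \frac{R^2}{3},
\]
independent of $\delta$, i.e., independent of which radial triangle we consider.

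Let $\iota$ be the inversion centered at $E$ with power $R^2/3$. By the identity above, $\iota$ swaps $F$ with the incenter $F_1$ of $\triangle ABE$ (both on the same ray from $E$, with reciprocal distances), and likewise $G\leftrightarrow G_1$, $H\leftrightarrow H_1$, $I\leftrightarrow I_1$. By Theorem~\ref{thm:biSteinX1}, $F_1G_1H_1I_1$ is cyclic, and since inversion maps any circle not passing through its center to a circle, the image $FGHI$ is cyclic as well. (In the exceptional configuration where the circumcircle of $F_1G_1H_1I_1$ passes through $E$, the image $FGHI$ degenerates to four collinear points, a situation excluded by the paper's convention on degenerate central quadrilaterals.) I expect the main obstacle to be verifying the key identity $|EX_{165}|\cdot|EX_1|=R^2/3$ cleanly; once that is secured, the inversion argument is essentially a one-line conclusion.
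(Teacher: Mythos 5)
Your proposal is correct, and it takes a genuinely different --- and, as it happens, more robust --- route than the paper. The paper reduces to Theorem~\ref{thm:biSteinX1} just as you do, but it does so by asserting that a single dilation of ratio $3$ centered at $E$ carries each incenter to the corresponding $X_{165}$-point, citing Lemma~\ref{lemma:X165}. That deduction does not follow from the lemma as stated: the lemma relates $X_{165}$ to $X_1$ and $X_3$, and the $X_3$ of the radial triangle $\triangle ABE$ is its own circumcenter, not the apex $E$. Indeed, your own distance formulas (which I have checked, including the signed computation $\tfrac43\cdot\tfrac{R}{2\cos\delta}-\tfrac13\cdot\tfrac{R\cos\delta}{1+\sin\delta}=\tfrac{R(1+\sin\delta)}{3\cos\delta}>0$, confirming that $X_{165}$ and $X_1$ lie on the same ray from $E$) give
\[
\frac{|EX_{165}|}{|EX_{1}|}\;=\;\frac{(1+\sin\delta)^{2}}{3\cos^{2}\delta}\;=\;\frac{1+\sin\delta}{3(1-\sin\delta)},
\]
which varies with the apex half-angle $\delta$ and hence differs from one radial triangle to the next; no fixed homothety centered at $E$ maps the incenter quadrilateral to $FGHI$. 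What \emph{is} constant is precisely your product $|EX_{165}|\cdot|EX_{1}|=R^{2}/3$, so the inversion centered at $E$ with power $R^{2}/3$ is the correct transformation, and cyclicity of $FGHI$ then follows at once from Theorem~\ref{thm:biSteinX1} and the circle-to-circle property of inversion. In short: same reduction target, but where the paper invokes a homothety whose ratio is not actually constant, you invoke an inversion whose power is, and your argument is the one that genuinely closes the step. The only loose end, which you already flag, is the degenerate case where the incenter circle passes through $E$, and that is excluded by the paper's conventions on degenerate central quadrilaterals.
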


\begin{figure}[h!t]
\centering
\includegraphics[width=0.4\linewidth]{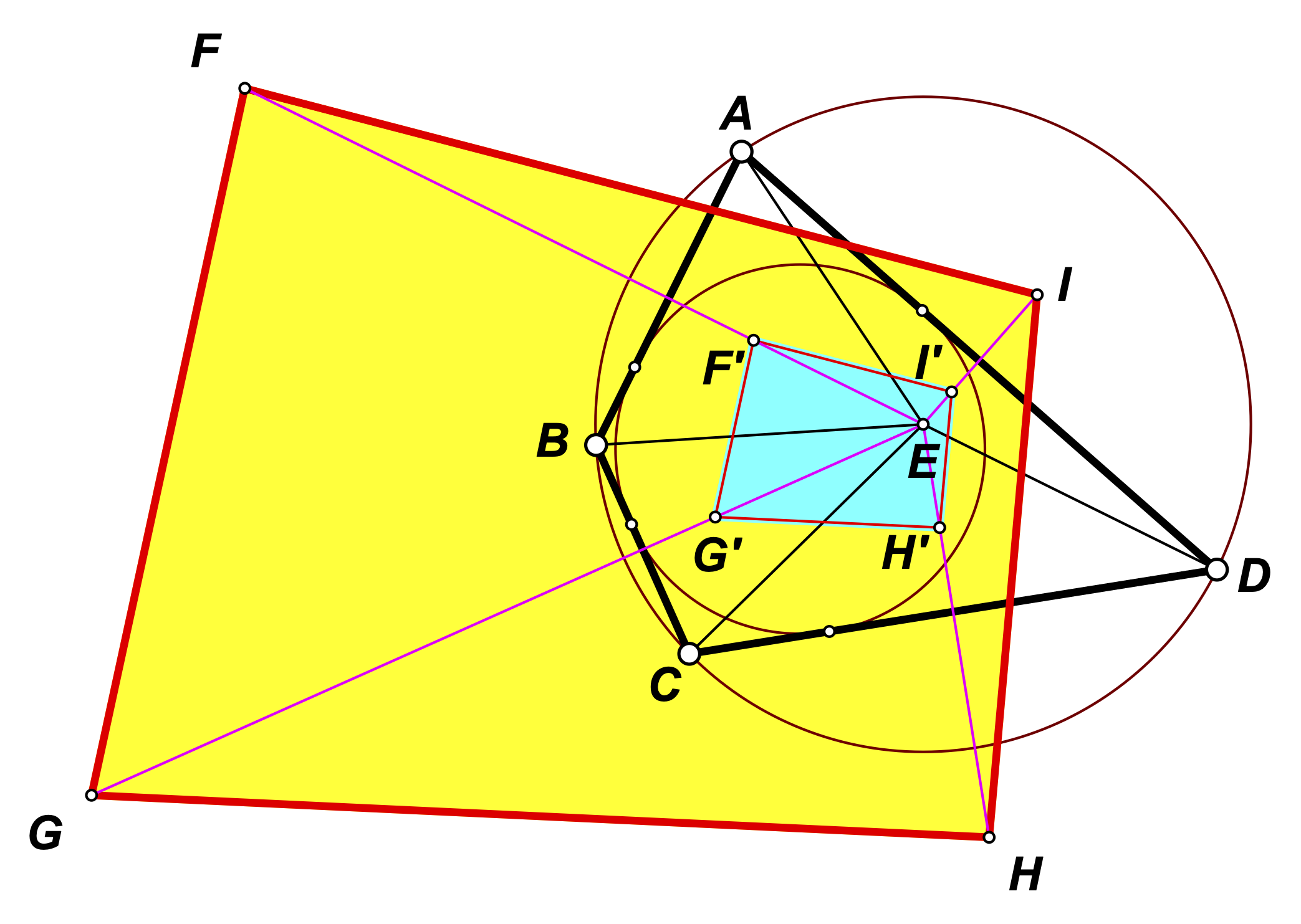}
\caption{bicentric quadrilateral, $X_{165}$-points $\implies$ cyclic}
\label{fig:biSteinX165}
\end{figure}

\begin{proof}
Let $F'$, $G'$, $H'$, and $I'$ be the $X_{1}$-points of triangles $\triangle ABE$, $\triangle BCE$, $\triangle CDE$,
and $\triangle DAE$, respectively.\\
Proposition~\ref{prop:circumcenter} $\implies$ $E$ is the circumcenter of $ABCD$.\\
$EA=EB$ $\implies$ $EAB$ isosceles triangle.\\
Lemma~\ref{lemma:X165} $\implies$ $F$ lies on $EF'$ and $EF=3EF'$.\\
A dilation of $3:1$ with center $E$ maps $F'$ to $F$.\\
Similarly, this dilation maps $F'G'H'I'$ to $FGHI$.\\
Theorem~\ref{thm:biSteinX1} $\implies$ $F'G'H'I'$ is cyclic.\\
Thus, $FGHI$ is homothetic to $F'G'H'I'$ and is therefore also cyclic. 
\end{proof}


Our computer study found a number of results for a cyclic orthodiagonal quadrilateral.
They are shown in the following table.

\bigskip
\begin{center}
\begin{tabular}{|l|p{3.3in}|}
\hline
\multicolumn{2}{|c|}{\textbf{\color{blue}\large \strut Central Quads of Cyclic Orthodiagonal Quadrilaterals}}\\ \hline
\textbf{Shape of central quad}&\textbf{centers}\\ \hline
\ru rectangle&$\mathbb{M}$, 2, 148--150, 402, 620, 671, 903\\ \hline
\ru bicentric&\no{$\mathbb{T}$}, 3, \no{399}\\ \hline
\ru equidiagonal&13, 14, 616--619\\ \hline
\ru isosceles trapezoid&154\\ \hline
\ru trapezoid&26, 139, 155--157\\ \hline
\end{tabular}
\end{center}


\begin{theorem}
\label{thm:coStein}
Let $E$ be the Steiner point of cyclic orthodiagonal quadrilateral $ABCD$.
Let $n$ be 2, 11, 115, 116, 122-125, 127, 130, 134--137, 139, 148--150, 244--247, 290, 338, 339, 402, 620, 671, 865--868, or 903.
Let $F$, $G$, $H$, and $I$ be the $X_n$-points of triangles $\triangle ABE$, $\triangle BCE$, $\triangle CDE$,
and $\triangle DAE$, respectively.
Then $FGHI$ is a rectangle.
Moreover, if $n=$148, 149, or 150, then $FGHI$ and $ABCD$ have the same diagonal point.
\end{theorem}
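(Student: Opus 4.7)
The strategy is to reduce the theorem to Theorem~\ref{thm:cqStein} plus one extra observation about side directions. For every $n$ in the list other than $n=2$, Theorem~\ref{thm:cqStein} already furnishes the parallelogram structure; and for $n=2$, the stronger conclusion is immediate from Theorem~\ref{thm:orthoArbX2} (which applies to any orthodiagonal quadrilateral with arbitrary radiator). So the substantive task is to upgrade \emph{parallelogram} to \emph{rectangle} for the remaining centers.

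The natural way to do this is to show that the sides of $FGHI$ are parallel to the two diagonals of $ABCD$, for then $AC\perp BD$ forces a right angle at each vertex and a parallelogram with a right angle is a rectangle. By Proposition~\ref{prop:circumcenter}, $E$ is the circumcenter of $ABCD$, so $EA=EB=EC=ED$ and each radial triangle is isosceles with apex at $E$ and base a side of $ABCD$. For $n\in\mathbb{M}$, Lemma~\ref{lemma:dpIsoscelesTriangleMidpoint} identifies $F$, $G$, $H$, $I$ with the midpoints of $AB$, $BC$, $CD$, $DA$, so $FGHI$ is literally the Varignon parallelogram of $ABCD$ and its sides are parallel to $AC$ and $BD$ by Varignon's classical theorem. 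For the remaining centers $n\in\{148,149,150,290,402,620,671,903\}$, the cited proofs in \cite{relationships} show that $FGHI$ is homothetic to a parallelogram associated with $ABCD$ which has exactly these same side directions; since homothety preserves directions, so does $FGHI$. Orthodiagonality of $ABCD$ then delivers the rectangle.

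For the additional claim when $n=148$, $149$, or $150$, recall that the diagonal point of a parallelogram is its centroid, so the goal is to prove $(F+G+H+I)/4 = AC\cap BD$. I would extract from the proof of Theorem~9.9 of \cite{relationships} the explicit homothety that carries the auxiliary parallelogram to $FGHI$, read off its center of dilation, and verify that the image parallelogram is centered at $AC\cap BD$ rather than, for example, at the vertex centroid. Alternatively, since $E$ is the circumcenter and $AC\perp BD$, one can compute barycentric coordinates of $F$, $G$, $H$, $I$ directly from the defining formulas for $X_{148}$, $X_{149}$, $X_{150}$ in each radial triangle and check that their average coincides with $AC\cap BD$. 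The main obstacle is exactly this last identification: the rectangle conclusion drops out quickly once the Varignon-direction step is in place, but pinning the center of $FGHI$ to the diagonal point of $ABCD$ for those three specific centers requires either importing the explicit homothety data from \cite{relationships} or a nontrivial symbolic computation.
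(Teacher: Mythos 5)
Your reduction is sound in outline, and two of its branches are complete: for $n=2$ the rectangle conclusion is indeed immediate from Theorem~\ref{thm:orthoArbX2}, and for $n\in\mathbb{M}$ the combination of Proposition~\ref{prop:circumcenter} and Lemma~\ref{lemma:dpIsoscelesTriangleMidpoint} really does identify $FGHI$ with the Varignon parallelogram, whose sides are parallel to $AC$ and $BD$, so orthodiagonality forces a rectangle. For comparison, the paper itself offers no in-text proof of this theorem at all (it defers to the supplementary Mathematica computations), and the source contains a suppressed remark noting that one would be done ``if we could state in Theorem~\ref{thm:cqStein} that the sides of the parallelogram were parallel to the diagonals of the reference quadrilateral'' --- which is exactly the pivot of your argument. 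So you have reconstructed the route the authors considered, but you have not closed the hole they flagged.

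The genuine gap is in the branch covering $n\in\{148,149,150,290,402,620,671,903\}$. Theorem~\ref{thm:cqStein} as proved in this paper asserts only that $FGHI$ is homothetic to ``a parallelogram associated with the reference quadrilateral''; it does not assert that this parallelogram has sides parallel to the diagonals of $ABCD$, and you supply no verification that it does --- you simply declare that the cited proofs in \cite{relationships} ``show'' this. Until you actually identify the associated parallelogram for each of these eight centers (e.g., as the Varignon parallelogram or a translate/dilate of it) and confirm its side directions, the upgrade from parallelogram to rectangle is unproved for them. The ``moreover'' clause for $n=148,149,150$ has the same status: you correctly reduce it to showing that the centroid of $FGHI$ is $AC\cap BD$, but you only describe two possible ways one might check this (extracting the homothety center from Theorem~9.9 of \cite{relationships}, or a symbolic computation) without carrying either out. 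As written, the proposal is a correct proof for $n=2$ and $n\in\mathbb{M}$ and a plausible but incomplete program for the remaining centers and for the diagonal-point claim.
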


\void{
Note that by Theorem~\ref{thm:cqStein}, $FGHI$ is a parallelogram.

\msg{We would be done if we could state in Theorem~\ref{thm:cqStein} that
the sides of the parallelogram were parallel to the diagonals of the reference quadrilateral.}
}

\begin{theorem}
Let $E$ be the Steiner point of cyclic orthodiagonal quadrilateral $ABCD$.
Let $F$, $G$, $H$, and $I$ be the $X_{154}$-points of triangles $\triangle ABE$, $\triangle BCE$, $\triangle CDE$,
and $\triangle DAE$, respectively.
Then $FGHI$ is an isosceles trapezoid.
\end{theorem}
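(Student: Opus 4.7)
The plan is to reduce the problem to the isosceles-triangle setting, locate $X_{154}$ explicitly on the axis of symmetry of each radial triangle, and then exploit the cyclic-orthodiagonal constraint to verify the isosceles-trapezoid property. By Proposition~\ref{prop:circumcenter}, the Steiner point $E$ coincides with the circumcenter of $ABCD$, so $EA=EB=EC=ED=R$. Consequently each radial triangle $\triangle ABE$, $\triangle BCE$, $\triangle CDE$, $\triangle DAE$ is isosceles with apex $E$. Every triangle center, being equivariant under reflection, must lie on the axis of symmetry; thus $F$, $G$, $H$, $I$ lie on the perpendiculars from $E$ to $AB$, $BC$, $CD$, $DA$, respectively. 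Because $X_{154}$ does not appear in any of Lemmas~\ref{lemma:dpIsoscelesTriangleA}, \ref{lemma:dpIsoscelesTriangleMidpoint}, or \ref{lemma:dpIsoscelesTriangleAntipode}, no immediate homothety argument (of the sort used to prove Theorems~\ref{thm:biSteinX214} and \ref{thm:biSteinX165}) is available, so $X_{154}$ must be located on this axis directly.

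Next I would parametrize analytically. Place $E$ at the origin with circumradius $R$, and write $A=Re^{i\alpha}$, $B=Re^{i\beta}$, $C=Re^{i\gamma}$, $D=Re^{i\delta}$. A short computation shows that orthodiagonality of $ABCD$ is equivalent to $\alpha+\gamma\equiv\beta+\delta+\pi\pmod{2\pi}$. Using the ETC barycentric formula for $X_{154}$ together with the constraint $EA=EB=R$ in each radial triangle, I would derive a closed-form expression for the signed distance from $E$ to $X_{154}$ as a function of the apex angle, and thus obtain explicit Cartesian formulas for $F$, $G$, $H$, $I$ in terms of $\alpha,\beta,\gamma,\delta$ and $R$.

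Finally, I would verify the isosceles-trapezoid property. Recall that an isosceles trapezoid is equivalent to a cyclic quadrilateral possessing a pair of parallel sides. I would first show that one pair of opposite sides of $FGHI$ is parallel by checking that the corresponding vector cross product vanishes under the orthodiagonal constraint, and then verify equality of the remaining two sides (or, alternatively, concyclicity of the four points) using the same constraint. The main obstacle will be the trigonometric simplification: $X_{154}$ is not a particularly clean center (it does not reduce to a midpoint, vertex, or antipode of the isosceles triangle, and it does not have constant Shinagawa coefficients), so the resulting expressions for $F$, $G$, $H$, $I$ are expected to be intricate. If manual simplification proves unwieldy, I would perform the verification symbolically in Mathematica using barycentric coordinates, consistent with the methodology of Section~\ref{section:methodology}.
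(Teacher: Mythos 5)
The paper prints no proof of this theorem at all: per its methodology (Section~3), the verification lives in the supplementary Mathematica notebooks as an exact symbolic computation, which is precisely where your plan also terminates. Your preliminary reductions are correct and would genuinely streamline that computation --- $E$ is the circumcenter, so each radial triangle is isosceles with apex $E$ and the four centers lie on the perpendiculars from $E$ to the sides; the orthodiagonality condition $\alpha+\gamma\equiv\beta+\delta+\pi\pmod{2\pi}$ is right; and the reduction of ``isosceles trapezoid'' to ``cyclic with a pair of parallel sides'' is valid --- but the decisive step (locating $X_{154}$ on each axis and checking parallelism and concyclicity) is deferred to the machine in your proposal exactly as it is in the paper, so this is essentially the same approach rather than an alternative to it.
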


\begin{theorem}
Let $E$ be the Steiner point of cyclic orthodiagonal quadrilateral $ABCD$.
Let $n$ be 26, 139, 155, 156, or 157.
Let $F$, $G$, $H$, and $I$ be the $X_n$-points of triangles $\triangle ABE$, $\triangle BCE$, $\triangle CDE$,
and $\triangle DAE$, respectively.
Then $FGHI$ is a trapezoid.
\end{theorem}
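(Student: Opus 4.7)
The plan is as follows. By Proposition~\ref{prop:circumcenter}, the Steiner point $E$ of the cyclic quadrilateral $ABCD$ coincides with its circumcenter, so $EA=EB=EC=ED$ and each of the four radial triangles is isosceles with apex $E$. Every triangle center of such an isosceles triangle lies on its axis of symmetry, which is the perpendicular from $E$ to the corresponding side of $ABCD$. So $F$, $G$, $H$, $I$ each lie on a specified ray from $E$, and are distinguished only by their signed distance from $E$ along that ray.

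I would place $E$ at the origin of the complex plane and rescale so that the circumcircle is the unit circle, writing $A=a$, $B=b$, $C=c$, $D=d$ with $|a|=|b|=|c|=|d|=1$. Using $\bar z = 1/z$ on the unit circle, the condition $AC\perp BD$ is readily seen to be equivalent to $ac+bd=0$. Since $a+b$ is the complex number pointing from the origin to the midpoint of $AB$, one can write
\[
F=\varphi_n(\theta_F)(a+b),\; G=\varphi_n(\theta_G)(b+c),\; H=\varphi_n(\theta_H)(c+d),\; I=\varphi_n(\theta_I)(d+a),
\]
where $\theta_F=\tfrac12\angle AEB$ (and analogously for $\theta_G,\theta_H,\theta_I$) and $\varphi_n(\theta)$ is a real-valued function of the half-apex-angle $\theta$ determined by the barycentric coordinates of $X_n$. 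The cyclic orthodiagonal hypothesis forces $\theta_F+\theta_H=\theta_G+\theta_I=\pi/2$, a symmetry to be exploited at the end.

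Showing that $FGHI$ is a trapezoid then reduces to verifying that one of the parallelism conditions $\operatorname{Im}\bigl((G-F)\overline{(H-I)}\bigr)=0$ or $\operatorname{Im}\bigl((H-G)\overline{(F-I)}\bigr)=0$ holds. For each $n\in\{26,139,155,156,157\}$ I would extract the barycentric coordinates of $X_n$ from the ETC, specialize them to the isosceles triangle with $|EA|=|EB|=1$ and apex angle $2\theta$ to obtain an explicit $\varphi_n(\theta)$, substitute into the parallelism criterion, and simplify under the constraint $ac+bd=0$. The main obstacle is that these five centers do not share an obvious common geometric description---$X_{26}$ is the circumcenter of the tangential triangle, $X_{139}$ an isogonal conjugate, and $X_{155}$--$X_{157}$ are tied to different orthic-type constructions---so there is no apparent uniform shortcut, and each case must be verified separately by exact symbolic computation in Mathematica, in line with the paper's methodology for its analytic proofs.
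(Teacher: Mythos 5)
The paper itself offers no in-text proof of this theorem: following the methodology described in Section 3, the result is verified by exact symbolic computation in the supplementary Mathematica notebooks, so your plan is being compared against an unexhibited computation rather than a written argument. Your setup is sound and is, at bottom, the same kind of proof, but with a cleaner decomposition than generic barycentric coordinates: Proposition~\ref{prop:circumcenter} does give $E$ as the circumcenter, each radial triangle is then isosceles with apex $E$, every Kimberling center of an isosceles triangle lies on its axis of symmetry (the paper uses this same fact in the proof of Theorem~\ref{thm:itCen}), and on the unit circle the orthodiagonality of $ABCD$ is indeed equivalent to $ac+bd=0$, which yields your relation $\theta_F+\theta_H=\theta_G+\theta_I=\pi/2$. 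Writing $F=\varphi_n(\theta_F)(a+b)$, etc., correctly reduces the theorem to showing that a single real function $\varphi_n$ of the half-apex angle satisfies an identity under that constraint, which is a genuinely smaller computation than four independent barycentric evaluations. What you have is still only a plan: the five functions $\varphi_n$ must actually be extracted from the ETC coordinates and the parallelism identity $\operatorname{Im}\bigl((G-F)\overline{(H-I)}\bigr)=0$ (or the one for the other pair of opposite sides --- you should expect the parallel pair to be the same for all five centers and should identify which) must be verified case by case; no uniform argument is available, exactly as you say. Since the paper likewise defers this verification to machine computation, your proposal matches the paper's approach in substance while organizing the computation more transparently; note only that for $n=139$ the center is the midpoint of the base (Lemma~\ref{lemma:dpIsoscelesTriangleMidpoint}), so that case degenerates to the Varignon rectangle and needs no further work.
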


\newpage

\begin{theorem}
\label{thm:coSteinX3}
Let $E$ be the Steiner point (circumcenter) of cyclic orthodiagonal quadrilateral $ABCD$.
Let $F$, $G$, $H$, and $I$ be the circumcenters of triangles $\triangle ABE$, $\triangle BCE$, $\triangle CDE$,
and $\triangle DAE$, respectively (Figure~\ref{fig:coSteinX3}).
Then $FGHI$ is a bicentric quadrilateral with incenter $E$.
\end{theorem}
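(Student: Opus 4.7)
The plan is to observe that Theorem~\ref{thm:coSteinX3} follows as an immediate conjunction of two results already established in this paper. Since a cyclic orthodiagonal quadrilateral $ABCD$ is both cyclic and orthodiagonal, the hypotheses of Theorem~\ref{thm:cqSteinX3} (with $n=3$) and of Theorem~\ref{thm:odSteinX3} are simultaneously satisfied, and together their conclusions are precisely that $FGHI$ is bicentric with incenter $E$.

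A small preliminary point is that the Steiner point has two characterizations that must be seen to agree on a cyclic orthodiagonal quadrilateral. Proposition~\ref{prop:circumcenter} identifies the Steiner point of $ABCD$ with the circumcenter $O$, while Proposition~\ref{prop:spOrtho} identifies it with the intersection of the perpendicular bisectors of the diagonals. These coincide, because $O$ lies on the perpendicular bisector of every chord of its own circle, in particular on both diagonals $AC$ and $BD$.

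With $E=O$ fixed, I would first apply Theorem~\ref{thm:cqSteinX3} to conclude that $FGHI$ is tangential with incenter $E$ (and, as a bonus, that its inradius equals half the circumradius of $ABCD$). I would then apply Theorem~\ref{thm:odSteinX3} to conclude that $FGHI$ is cyclic. A quadrilateral that is simultaneously cyclic and tangential is by definition bicentric, and its incenter is the point $E$ produced by the first application.

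The proof presents essentially no obstacle once the two earlier theorems are in hand; the genuinely hard step has already been absorbed into Theorem~\ref{thm:odSteinX3}, for which the authors have flagged the problem of finding a purely geometric proof as open. Granting that theorem, together with the geometrically proved Theorem~\ref{thm:cqSteinX3}, the present statement is immediate.
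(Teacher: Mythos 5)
Your proposal is correct and matches the paper's own proof, which likewise combines Theorem~\ref{thm:cqSteinX3} (tangential with incenter $E$, since $ABCD$ is cyclic) with Theorem~\ref{thm:odSteinX3} (cyclic, since $ABCD$ is orthodiagonal) to conclude that $FGHI$ is bicentric. The extra remark reconciling the two characterizations of the Steiner point is harmless but not needed.
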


\begin{figure}[h!t]
\centering
\includegraphics[width=0.4\linewidth]{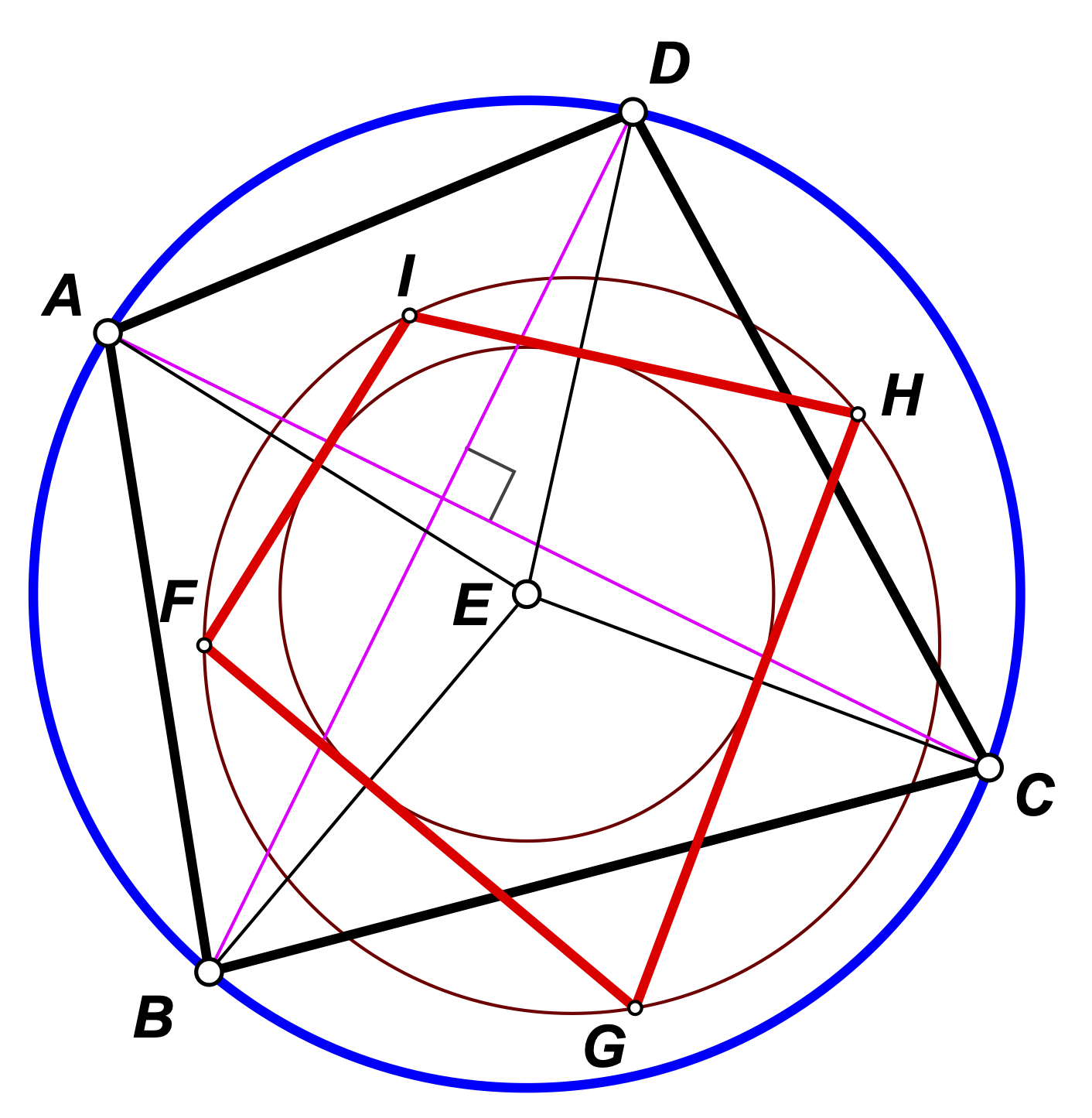}
\caption{cyclic orthodiagonal quadrilateral, $X_{3}$-points $\implies$ bicentric}
\label{fig:coSteinX3}
\end{figure}

\begin{proof}
Since $ABCD$ is cyclic, $FGHI$ is tangential by Theorem~\ref{thm:cqSteinX3}.
Since $ABCD$ is orthodiagonal, $FGHI$ is cyclic by Theorem~\ref{thm:odSteinX3}.
Thus, $FGHI$ is bicentric.
\end{proof}

\begin{open}
Is there a purely geometric proof of Theorem~\ref{thm:coSteinX3}?
\end{open}

\begin{theorem}
\label{thm:coSteinX14}
Let $E$ be the Steiner point of a cyclic orthodiagonal quadrilateral $ABCD$.
Let $n$ be 13, 14, 616, 617, 618, or 619.
Let $F$, $G$, $H$, and $I$ be the $X_{n}$-points of triangles $\triangle ABE$, $\triangle BCE$, $\triangle CDE$,
and $\triangle DAE$, respectively (Figure~\ref{fig:coSteinX14} shows the case when $n=14$).
Then $FGHI$ is equidiagonal.
\end{theorem}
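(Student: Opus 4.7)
The plan is to reduce the equality $FH=GI$ to a symmetric identity in two arc-parameters. Since $ABCD$ is cyclic, Proposition~\ref{prop:circumcenter} identifies $E$ with the circumcenter, so $EA=EB=EC=ED=R$. Each radial triangle is therefore isosceles with apex $E$ and legs $R$, and every triangle center lies on its axis of symmetry---the perpendicular bisector of the opposite side, which passes through $E$. Consequently, $F,G,H,I$ all lie on the four rays from $E$ through the midpoints of $AB$, $BC$, $CD$, $DA$, respectively, and the signed position of $X_n$ along its ray is a function $d_n(\theta)$ of $R$ and the apex angle $\theta$ at $E$ alone.

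Parameterize $A,B,C,D$ on the circumcircle at angles $0$, $2p$, $2p+2r$, $2p+2r+2q$ measured from $E$; then orthodiagonality of $ABCD$ is equivalent to $p+q=r+s=\pi/2$, where $2s$ is the remaining arc. Hence the apex angles of $\triangle ABE,\triangle CDE$ are $2p$ and $\pi-2p$, and those of $\triangle BCE,\triangle DAE$ are $2r$ and $\pi-2r$. A direct computation in these circular coordinates shows that the angle at $E$ between the axes of $\triangle ABE$ and $\triangle CDE$ equals $\pi/2+2r$, while the angle between the axes of $\triangle BCE$ and $\triangle DAE$ equals $\pi/2+2p$. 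The law of cosines then gives
\begin{align*}
FH^{2}&=d_n(2p)^{2}+d_n(\pi-2p)^{2}+2\,d_n(2p)\,d_n(\pi-2p)\sin 2r,\\
GI^{2}&=d_n(2r)^{2}+d_n(\pi-2r)^{2}+2\,d_n(2r)\,d_n(\pi-2r)\sin 2p.
\end{align*}

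The key structural claim is that, for every $n\in\{13,14,616,617,618,619\}$, the distance function has the linear form $d_n(\theta)=R\bigl(\alpha_n\cos(\theta/2)+\beta_n\sin(\theta/2)\bigr)$ for some constants $\alpha_n,\beta_n$. Granted this, short expansions give $d_n(2p)^{2}+d_n(\pi-2p)^{2}=R^{2}(\alpha_n^{2}+\beta_n^{2})+2R^{2}\alpha_n\beta_n\sin 2p$ and $d_n(2p)\,d_n(\pi-2p)=R^{2}\alpha_n\beta_n+\tfrac{R^{2}}{2}(\alpha_n^{2}+\beta_n^{2})\sin 2p$, so both $FH^{2}$ and $GI^{2}$ collapse to the same expression, symmetric in $\sin 2p$ and $\sin 2r$; hence $FH=GI$. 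For $X_{13}$ and $X_{14}$ the linear form is immediate: the Torricelli-type condition $\angle AX_nB=120\degrees$ (respectively $60\degrees$), imposed on the axis of symmetry of $\triangle ABE$, gives $d_{13,14}(\theta)=R\bigl[\cos(\theta/2)\mp\sin(\theta/2)/\sqrt{3}\bigr]$. The main obstacle is the same linearity verification for $X_{616},X_{617},X_{618},X_{619}$---Fermat--Napoleon-family centers whose barycentric formulas from \cite{ETC}, once specialized to an isosceles triangle with $b=c$ and simplified symbolically in Mathematica, yield the required constants $(\alpha_n,\beta_n)$ case by case. This last step, carried out in the supplementary notebooks, completes the proof.
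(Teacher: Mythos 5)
Your argument is correct, and it takes a genuinely different route from the paper: the paper prints no proof of this theorem at all---it is one of the results certified only by exact symbolic computation in barycentric coordinates in the supplementary Mathematica notebooks, and the authors even pose an open question asking for a geometric proof when $n$ is 13 or 14. Your reduction checks out step by step: $E$ is the circumcenter, so each radial triangle is isosceles with apex $E$ and every center sits on the apex axis at a signed distance $d_n(\theta)$ depending only on the apex angle; orthodiagonality pairs the apex angles as $2p,\pi-2p$ and $2r,\pi-2r$ with inter-axis angles $\pi/2+2r$ and $\pi/2+2p$; and the linear form $d_n(\theta)=R(\alpha_n\cos(\theta/2)+\beta_n\sin(\theta/2))$ makes $FH^2$ and $GI^2$ collapse to the common value $R^2\bigl[(\alpha_n^2+\beta_n^2)(1+\sin 2p\sin 2r)+2\alpha_n\beta_n(\sin 2p+\sin 2r)\bigr]$. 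I verified your $X_{13}$, $X_{14}$ formulas from the trilinears $\csc(A\pm\pi/3):\cdots$, and note that the vector form of the law of cosines, $|F-H|^2=d_1^2+d_2^2-2d_1d_2(u\cdot v)$, keeps your displayed identities valid even when $d_n$ changes sign (as $d_{13}$ does once the apex angle exceeds $120\degrees$). Moreover, the one step you defer to Mathematica does not actually need it: $X_{618}$ and $X_{619}$ are the complements of $X_{13}$ and $X_{14}$, and $X_{616}$ and $X_{617}$ are their anticomplements, i.e.\ images under the homotheties $h(X_2,-\tfrac12)$ and $h(X_2,-2)$; since $d_2(\theta)=\tfrac23R\cos(\theta/2)$ is itself of the linear form and these homotheties act affinely on the axis coordinate, the constants $(\alpha_n,\beta_n)$ for $n=616,\dots,619$ are fixed affine combinations of those for $X_2$ and $X_{13}$ or $X_{14}$. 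With that observation your argument is a complete, essentially geometric proof for all six centers---more informative than the paper's computational certificate, and it settles the paper's open question for $n=13,14$ in the affirmative.
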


\begin{figure}[h!t]
\centering
\includegraphics[width=0.4\linewidth]{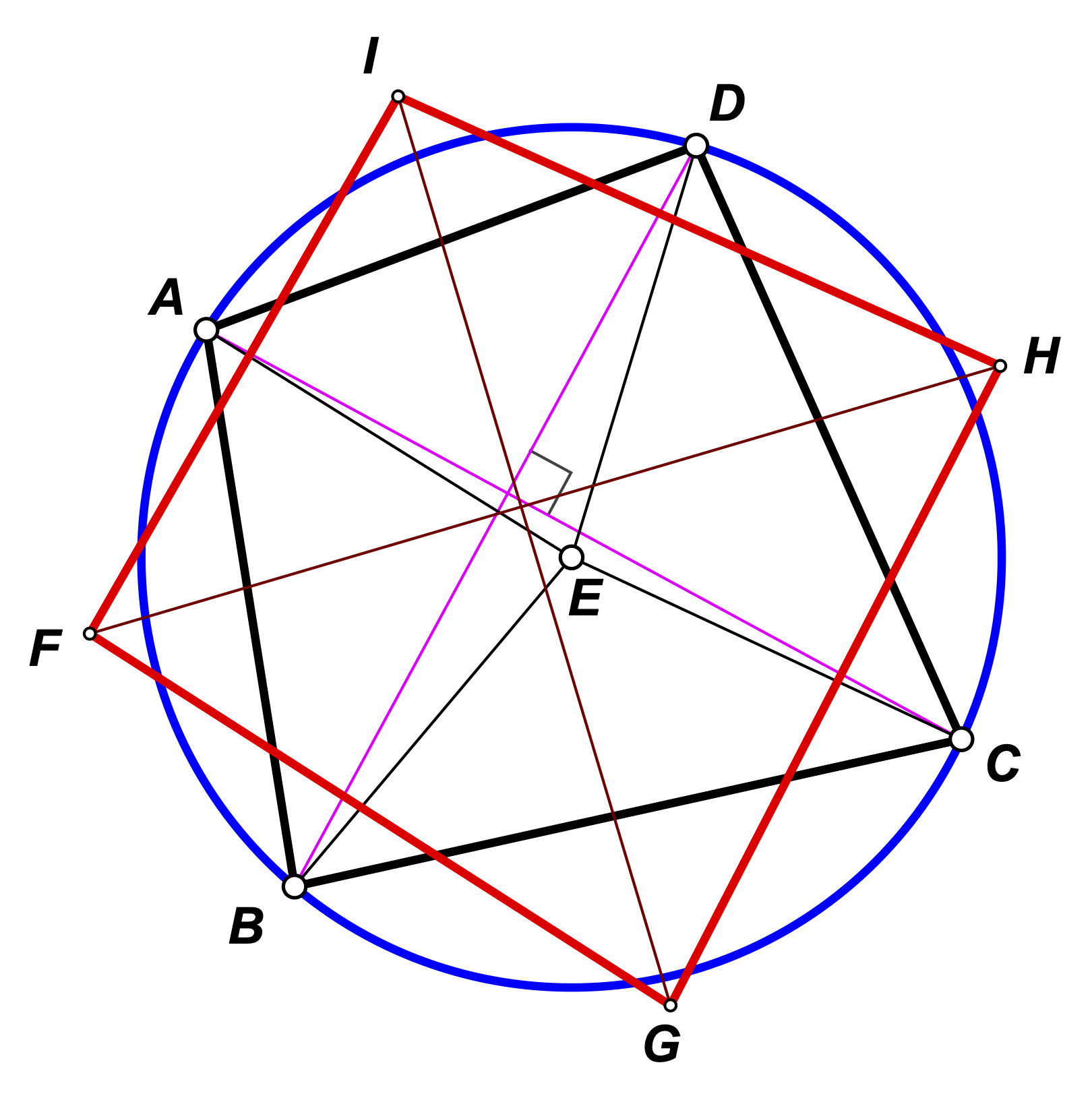}
\caption{cyclic orthodiagonal quad, $X_{14}$-points $\implies$ $FH=GI$}
\label{fig:coSteinX14}
\end{figure}

Note: In Figure~\ref{fig:coSteinX14}, $FH$ is not perpendicular to $GI$.

\begin{open}
Is there a purely geometric proof of Theorem~\ref{thm:coSteinX14} for the cases where $n$ is 13 or 14?
\end{open}

\newpage

Our computer study found a number of results for Equidiagonal Orhodiagonal Trapezoids.
They are shown in the following table.

\bigskip
\begin{center}
\begin{tabular}{|l|p{3.3in}|}
\hline
\multicolumn{2}{|c|}{\textbf{\color{blue}\large \strut Central Quads of Equidiagonal Orthodiagonal Trapezoids}}\\ \hline
\textbf{Shape of central quad}&\textbf{centers}\\ \hline
\ru square&$\mathbb{M}$, 2, 148--150, 290, 402, 620, 671, 903\\ \hline 
\ru equidiagonal kite&13, 14, 616--618\\ \hline
\ru right kite&$\mathbb{T}$, 3, 399\\ \hline
\end{tabular}
\end{center}


Note: An Equidiagonal Orthodiagonal Trapezoid is an isosceles trapezoid and is cyclic.
As an isosceles trapezoid, the central quadrilateral is always a kite.

\begin{theorem}
\label{thm:eotSteinX14}
Let $E$ be the Steiner point of an equidiagonal orthodiagonal trapezoid $ABCD$.
Let $n$ be 13, 14, 616, 617, or 618.
Let $F$, $G$, $H$, and $I$ be the $X_{n}$-points of triangles $\triangle ABE$, $\triangle BCE$, $\triangle CDE$,
and $\triangle DAE$, respectively (Figure~\ref{fig:eotSteinX14} shows the case when $n=14$).
Then $FGHI$ is an equidiagonal kite.
\end{theorem}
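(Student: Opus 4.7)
The plan is to combine two results already established in this section: Theorem~\ref{thm:itCen}, which produces a kite, and Theorem~\ref{thm:coSteinX14}, which produces an equidiagonal central quadrilateral. Since the hypotheses of both can be checked directly under the assumptions of the theorem, the conclusion follows by intersecting their outputs.

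First I would note, as the paragraph preceding the theorem observes, that an equidiagonal orthodiagonal trapezoid $ABCD$ is automatically isosceles (a trapezoid is isosceles if and only if its diagonals are equal), and therefore cyclic. By Proposition~\ref{prop:circumcenter}, this identifies the Steiner point $E$ with the circumcenter of $ABCD$. Since the circumcenter of an isosceles trapezoid lies on its axis of symmetry, which is precisely the perpendicular bisector of $BC$, the hypothesis of Theorem~\ref{thm:itCen} is satisfied. That theorem therefore guarantees that $FGHI$ is a kite, for any triangle center whatsoever, in particular for each $X_n$ with $n\in\{13,14,616,617,618\}$.

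Next, because $ABCD$ is cyclic and orthodiagonal and each of the values $n\in\{13,14,616,617,618\}$ lies among the indices covered by Theorem~\ref{thm:coSteinX14}, that theorem applies and yields $FH=GI$, i.e., $FGHI$ is equidiagonal. A quadrilateral that is simultaneously a kite and equidiagonal is by definition an equidiagonal kite, which is the desired conclusion.

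The main obstacle is not computational but structural: one must recognize that both required hypotheses, namely ``$E$ lies on the perpendicular bisector of $BC$'' for Theorem~\ref{thm:itCen} and ``$ABCD$ is cyclic orthodiagonal'' for Theorem~\ref{thm:coSteinX14}, are both consequences of the single assumption that $ABCD$ is an equidiagonal orthodiagonal trapezoid. Once this is noticed, no further argument is needed. If one prefers not to rely on the circumscribed-circle characterization, an alternative way to place $E$ on the perpendicular bisector of $BC$ is via Proposition~\ref{prop:spOrtho}: the Steiner point of an orthodiagonal quadrilateral is the intersection of the perpendicular bisectors of the diagonals, and in the isosceles trapezoid $ABCD$ this intersection automatically lies on the axis of symmetry.
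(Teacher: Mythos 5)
Your proposal is correct and follows essentially the same route as the paper, which proves this theorem in two lines by citing Theorem~\ref{thm:itCen} for the kite property and Theorem~\ref{thm:coSteinX14} for equidiagonality. Your additional verification that the hypotheses of both theorems hold (the trapezoid is isosceles because its diagonals are equal, hence cyclic, and the Steiner point is the circumcenter lying on the axis of symmetry) is a useful elaboration of details the paper leaves to the remark preceding the theorem, but it is not a different argument.
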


\begin{figure}[h!t]
\centering
\includegraphics[width=0.55\linewidth]{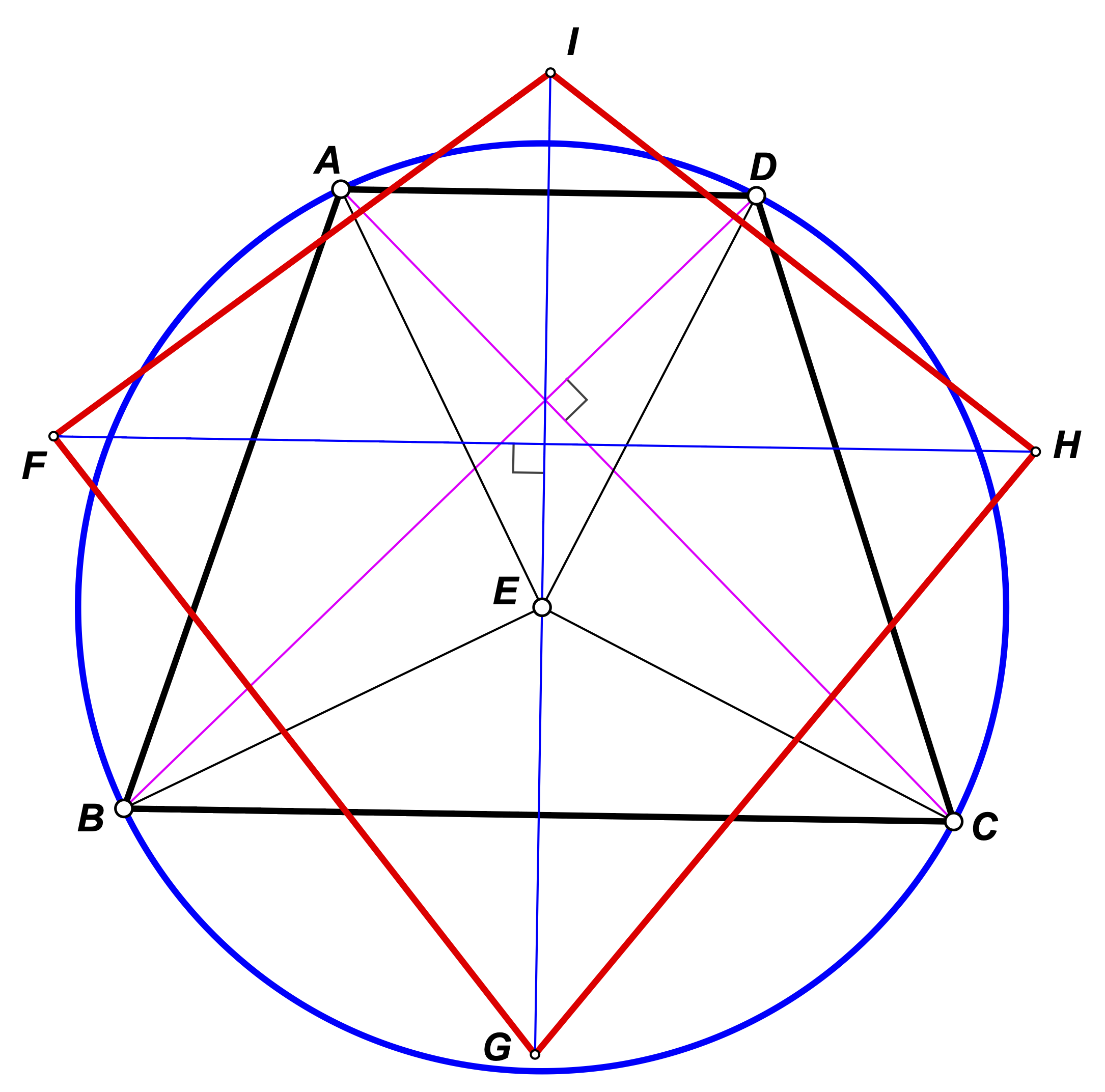}
\caption{equi ortho trap, $X_{14}$-points $\implies$ equidiagonal kite}
\label{fig:eotSteinX14}
\end{figure}

\begin{proof}
Quadrilateral $FGHI$ is a kite by Theorem~\ref{thm:itCen}.
Quadrilateral $FGHI$ is equidiagonal by Theorem~\ref{thm:coSteinX14}.
Thus  $FGHI$ is an equidiagonal kite.
\end{proof}

\begin{theorem}
Let $E$ be the Steiner point of an equidiagonal orthodiagonal trapezoid $ABCD$.
Let $n$ be 3 or 399 or in $\mathbb{T}$.
Let $F$, $G$, $H$, and $I$ be the $X_{n}$-points of triangles $\triangle ABE$, $\triangle BCE$, $\triangle CDE$,
and $\triangle DAE$, respectively.
Then $FGHI$ is a right kite.
\end{theorem}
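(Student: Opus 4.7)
The plan is to reduce to showing $FGHI$ is a cyclic kite for each of the three families of centers, since in a cyclic kite the two opposite angles at the non-axis vertices are equal by reflection symmetry yet must also sum to $180^\circ$, forcing each to be $90^\circ$. An equidiagonal trapezoid is automatically an isosceles trapezoid (hence cyclic), and its axis of symmetry $m$ contains the Steiner point $E$ by the uniqueness of $E$ under the reflection $A\leftrightarrow D$, $B\leftrightarrow C$. So Theorem~\ref{thm:itCen} applies and gives that $FGHI$ is a kite in all three cases. By Proposition~\ref{prop:circumcenter}, $E$ is the circumcenter of $ABCD$, so $EA=EB=EC=ED=R$ and every radial triangle is isosceles with apex $E$.

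The essential geometric input is that in an equidiagonal orthodiagonal trapezoid one has $\angle AEB=\angle CED=90^\circ$. This comes from two standard facts about the cyclic quadrilateral $ABCD$: perpendicular chords $AC$, $BD$ give $\mathrm{arc}(AB)+\mathrm{arc}(CD)=\pi$, while $AB=CD$ (isosceles trapezoid) gives $\mathrm{arc}(AB)=\mathrm{arc}(CD)$, so each arc is a quadrant and the corresponding central angles equal $90^\circ$. I then treat the case $n=3$ directly: $F$ and $I$ are the circumcenters of two triangles sharing the edge $AE$, so the line $FI$ is the perpendicular bisector of $AE$; likewise $FG$ is the perpendicular bisector of $BE$. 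Since the angle between two lines equals the angle between their perpendiculars, $\angle GFI=\angle AEB=90^\circ$, and by reflection across $m$ the angle at $H$ is also $90^\circ$. Thus for $n=3$ the kite is cyclic, hence a right kite.

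To extend to $n=399$ and $n\in\mathbb{T}$, I use affine reductions to the $n=3$ case, in the spirit of the proofs of Theorems~\ref{thm:biSteinX165} and~\ref{thm:biSteinX214}. Because every radial triangle is isosceles with apex $E$, Lemma~\ref{lemma:dpIsoscelesTriangleA} gives $X_{110}=E$ in each of them, so $X_{399}$, defined as the reflection of $X_3$ in $X_{110}$, equals $2E-X_3$ in each triangle. The central quadrilateral for $n=399$ is therefore the image of the one for $n=3$ under the point reflection about $E$, and a similarity preserves being a right kite. For $X_n\in\mathbb{T}$, Lemma~\ref{lemma:dpIsoscelesTriangleAntipode} gives $X_n=2X_3-E$ in each radial triangle, so the central quadrilateral is the image of that for $n=3$ under the homothety centered at $E$ with ratio $2$; again a similarity, again a right kite.

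The main obstacle I expect is isolating the observation $\angle AEB=90^\circ$: without it the kite at $n=3$ is still tangential (by Theorem~\ref{thm:cqSteinX3}) but not obviously cyclic, and the reduction from the other two families produces a tangential kite rather than a right kite. Once this observation is in hand, the three families of centers collapse to a single geometric computation via the affine maps centered at $E$, and nothing beyond Theorem~\ref{thm:itCen}, Proposition~\ref{prop:circumcenter}, and Lemmas~\ref{lemma:dpIsoscelesTriangleA}--\ref{lemma:dpIsoscelesTriangleAntipode} is needed.
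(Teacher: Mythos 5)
Your proposal is correct, and it supplies a complete geometric argument for a theorem the paper merely states: the only justification offered in the text is the remark that an equidiagonal orthodiagonal trapezoid is an isosceles cyclic trapezoid, so that Theorem~\ref{thm:itCen} yields a kite; the ``right'' part is left to computer verification (indeed, in the cyclic-orthodiagonal table the entries $\mathbb{T}$ and $399$ are flagged red, and even the $n=3$ case of cyclicity rests on the computer-proved Theorem~\ref{thm:odSteinX3}). Your route bypasses that dependence: the observation that perpendicular diagonals give $\mathrm{arc}(AB)+\mathrm{arc}(CD)=180\degrees$ while $AB=CD$ forces $\angle AEB=\angle CED=90\degrees$, after which the $n=3$ case is immediate from $FI\perp AE$, $FG\perp BE$, giving right angles at the two non-axis vertices $F$ and $H$ of the kite directly. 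The reductions to the other two families are also sound and mirror the paper's own technique in Theorems~\ref{thm:biSteinX214} and~\ref{thm:biSteinX165}: Lemma~\ref{lemma:dpIsoscelesTriangleAntipode} gives the ratio-$2$ homothety for $\mathbb{T}$, and the point reflection for $n=399$ rests on the ETC property that $X_{399}$ is the reflection of $X_3$ in $X_{110}$ together with $X_{110}=E$ from Lemma~\ref{lemma:dpIsoscelesTriangleA}. That one external fact should be cited explicitly (it is not among the paper's stated lemmas), but it is corroborated internally: the paper groups $399$ with $3$ in Theorem~\ref{thm:cqSteinX3} with the same inradius $R/2$, which is exactly what a point reflection about $E$ predicts, whereas the antipodal relation would put $399$ with $\mathbb{T}$. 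As a bonus, your homothety argument shows that the $\mathbb{T}$ and $399$ cases are congruent or similar copies of the $n=3$ central quadrilateral about $E$ for any cyclic reference quadrilateral, which would also remove the red coloring of those entries in the bicentric row once Theorem~\ref{thm:odSteinX3} is granted.
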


\newpage

Our computer study also found some results for Equidiagonal Kites.
They are shown in the following table.

\bigskip
\begin{center}
\begin{tabular}{|l|p{3.3in}|}
\hline
\multicolumn{2}{|c|}{\textbf{\color{blue}\large \strut Central Quads of Equidiagonal Kites}}\\ \hline
\textbf{Shape of central quad}&\textbf{centers}\\ \hline
\ru square&486, 642\\ \hline
\ru rectangle&586\\ \hline
\end{tabular}
\end{center}

\begin{theorem}
\label{thm:ekSteinX486}
Let $E$ be the Steiner point of an equidiagonal kite $ABCD$.
Let $n$ be 486 or 642.
Let $F$, $G$, $H$, and $I$ be the $X_{n}$-points of triangles $\triangle ABE$, $\triangle BCE$, $\triangle CDE$,
and $\triangle DAE$, respectively.
Then $FGHI$ is a square.
\end{theorem}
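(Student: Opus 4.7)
The plan is to establish a structural lemma about $X_{486}$ and then apply it to each of the four radial triangles. First, set up coordinates aligned with the kite's symmetry: place $A = (a, 0)$, $C = (-c, 0)$, $B = (0, b)$, $D = (0, -b)$, with the equidiagonal hypothesis forcing $a + c = 2b$. Since a kite is orthodiagonal, Proposition~\ref{prop:spOrtho} places $E$ at the intersection of the perpendicular bisectors of the diagonals, which is the midpoint $E = ((a-c)/2, 0)$ of $AC$. One then checks $|AE| = |CE| = (a+c)/2 = b$, while $B$ and $D$ lie at signed distance $\pm b$ from line $AC$. Consequently, every radial triangle has a base of length $b$ on the $x$-axis and its opposite vertex at distance $b$ from that base.

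\textbf{Key lemma.} In any triangle whose base of length $L$ sits on the $x$-axis from $(s, 0)$ to $(s + L, 0)$ with the opposite vertex at height $L$ above the $x$-axis (at any horizontal position $t$), the $X_{486}$-point equals $(s + L/2, L/2)$---the center of the inward square erected on the base. I would prove this by direct computation: from the trilinear coordinates $\sec(A + \pi/4) : \sec(B + \pi/4) : \sec(C + \pi/4)$ of $X_{486}$, combined with $\cos(\theta + \pi/4) = (\cos\theta - \sin\theta)/\sqrt{2}$, the barycentric coordinates in this triangle simplify to $v : -u : L$, where $v = t - s$ and $u = t - s - L$. The identity $v - u = L$ then makes the Cartesian evaluation telescope to $(s + L/2, L/2)$, independently of $t$.

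\textbf{Assembly.} Applying the lemma to each radial triangle, $F$ is the center of the inward square on $AE$ above the $x$-axis and $G$ the center of the inward square on $CE$ above; the kite's reflection symmetry across $AC$ gives $H$ and $I$ as the corresponding centers below. Explicit evaluation yields $F = ((3a-c)/4, b/2)$, $G = ((a-3c)/4, b/2)$, $H = ((a-3c)/4, -b/2)$, $I = ((3a-c)/4, -b/2)$, which form a rectangle of width $(a+c)/2 = b$ and height $b$, i.e., a square. Geometrically, the four congruent $b \times b$ inward squares tile the $2b \times 2b$ super-square $[-c, a] \times [-b, b]$ centered at $E$, and $FGHI$ joins the four tile centers. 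For $n = 642$ the same scheme applies once one derives the analogous ``square-center'' lemma from its trilinear coordinates; consistent with the paper's methodology, the algebraic verification is cleanest in Mathematica.

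The main obstacle will be the trigonometric simplification in the key lemma: the cancellation $v - u = L$ that forces the $y$-coordinate to be independent of $t$ is not obvious a priori, and it is the heart of why the isosceles trapezoid guaranteed by Theorem~\ref{thm:kiCen} (together with the orthodiagonal property from Theorem~\ref{thm:eoSteinX642}) actually has equal parallel sides and therefore collapses to a square.
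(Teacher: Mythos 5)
Your treatment of the case $n=486$ is correct, and it takes a genuinely different route from the paper, which disposes of the theorem entirely by citation (Theorem 10.9 of \cite{relationships} for $n=486$ and Theorem 10.7 for $n=642$). I verified your key lemma in several instances: for a triangle with base of length $L$ from $(s,0)$ to $(s+L,0)$ and apex at $(t,L)$, the inner Vecten point $X_{486}$ does have barycentrics $(t-s):-(t-s-L):L$ with respect to (left base vertex, right base vertex, apex) and therefore sits at $(s+L/2,\,L/2)$ independently of $t$. Your identification of $E$ as the midpoint of $AC$ via Proposition~\ref{prop:spOrtho}, the observation that every radial triangle has base $b$ and height $b$, and the resulting coordinates $F=((3a-c)/4,b/2)$, $G=((a-3c)/4,b/2)$, $H=((a-3c)/4,-b/2)$, $I=((3a-c)/4,-b/2)$ are all right, and they give a $b\times b$ square centered at $E$ with sides parallel to the diagonals of $ABCD$ --- quantitative information that is invisible in the paper's citation. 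This is a worthwhile self-contained argument.

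The case $n=642$, however, is a genuine gap, not a routine extension. You never identify $X_{642}$, state the analogous positional lemma, or verify it; ``the same scheme applies'' is an assertion. The danger is that the crucial feature of your lemma --- the $X_{486}$-point's position over the base is independent of the apex's horizontal coordinate $t$ --- is a special property of the inner Vecten point, and the radial triangles $\triangle ABE$ and $\triangle BCE$ are \emph{not} congruent (the apex $B$ sits over the base $CE$ but outside the base $AE$ when $a>c$). For a center whose height above the base varies with $t$ (the outer Vecten point $X_{485}$ is an explicit example: its height is $L/4$ when the apex is over an endpoint of the base but $3L/10$ when it is over the midpoint), the points $F$ and $G$ land at different heights and $FGHI$ is not even a rectangle. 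So the $t$-independence must be re-proved for $X_{642}$ specifically; it does hold if $X_{642}$ is the complement of $X_{486}$ (then the point sits at $((s+L/2+t)/2,\,L/4)$, the height is again $t$-free, and the assembly yields a square of side $b/2$), but that identification and computation is exactly the work you deferred. Either carry it out in your coordinate frame or fall back on the paper's citation for this case. Your closing remark is also only motivation: Theorem~\ref{thm:kiCen} plus orthodiagonality of $FGHI$ does not force a square, since orthodiagonal isosceles trapezoids need not be squares, so the coordinate computation cannot be avoided.
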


\begin{proof}
The case $n=486$ follows from Theorem~10.9 of \cite{relationships}.\\
The case $n=642$ follows from Theorem~10.7 of \cite{relationships}.
\end{proof}

\begin{theorem}
\label{thm:ekSteinX586}
Let $E$ be the Steiner point of an equidiagonal kite $ABCD$.
Let $F$, $G$, $H$, and $I$ be the $X_{586}$-points of triangles $\triangle ABE$, $\triangle BCE$, $\triangle CDE$,
and $\triangle DAE$, respectively.
Then $FGHI$ is a rectangle.
\end{theorem}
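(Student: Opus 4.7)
The plan is to combine the kite symmetry with a short coordinate verification. First, since a kite is orthodiagonal, Proposition~\ref{prop:spOrtho} tells us that the Steiner point $E$ is the intersection of the perpendicular bisectors of the two diagonals. In a kite with $AB=AD$, the axis of symmetry $AC$ is itself the perpendicular bisector of $BD$, so $E$ lies on $AC$. Theorem~\ref{thm:kiCen} then gives at once that $FGHI$ is an isosceles trapezoid with $FI\parallel GH$ and $FI\perp AC$. In any isosceles trapezoid a single right angle forces all four angles to be right, so it suffices to establish $FG\perp FI$, equivalently $FG\parallel AC$.

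Second, I would place coordinates along the axis of symmetry: $A=(-s,0)$, $C=(t,0)$, $B=(0,h)$, $D=(0,-h)$ with $s,t,h>0$. The equidiagonal condition $|AC|=|BD|$ becomes $s+t=2h$, and the Steiner point is $E=((t-s)/2,\,0)$, being the intersection of the perpendicular bisectors $y=0$ and $x=(t-s)/2$ of $BD$ and $AC$ respectively. In these coordinates the two radial triangles $\triangle ABE$ and $\triangle BCE$ share the apex $B=(0,h)$, while their other two vertices lie on the $x$-axis. Reflection of the entire configuration across the $x$-axis swaps $\triangle ABE$ with $\triangle ADE$ and $\triangle BCE$ with $\triangle DCE$, which automatically sends $F\mapsto I$ and $G\mapsto H$; this reconfirms that $FI$ and $GH$ are vertical, and reduces the remaining task to showing that $F$ and $G$ have the same $y$-coordinate.

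Finally, I would substitute the barycentric formula for $X_{586}$ from \cite{ETC} into the two radial triangles, convert to Cartesian coordinates, impose the single relation $s+t=2h$, and verify symbolically in Mathematica that the $y$-coordinates of $F$ and $G$ agree. The expected main obstacle is algebraic: unlike the proofs of Theorems~\ref{thm:biSteinX165} and~\ref{thm:biSteinX214}, where $X_{165}$ and $X_{214}$ reduce by a homothety centered at $E$ to $X_1$ and thereby inherit a previously established cyclic result, I do not see a classical point on the $X_{586}$-line through which an analogous reduction to $X_{486}$ (Theorem~\ref{thm:ekSteinX486}) would carry over. The equality of the $y$-coordinates must therefore be extracted directly from the (somewhat bulky) $X_{586}$ expression under the constraint $s+t=2h$, and this final symbolic reduction is performed in the supplementary Mathematica notebook.
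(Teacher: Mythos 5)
Your reduction is sound and, in fact, goes further than the paper does in print: the paper states Theorem~\ref{thm:ekSteinX586} with no accompanying argument, deferring (per Section~\ref{section:methodology}) to a barycentric computation in the supplementary Mathematica notebook. Your chain of observations is correct: a kite is orthodiagonal, so by Proposition~\ref{prop:spOrtho} the Steiner point is the intersection of the perpendicular bisectors of the diagonals, which for a kite with $AB=AD$, $CB=CD$ is the midpoint of the axis $AC$; hence $E$ lies on diagonal $AC$ and Theorem~\ref{thm:kiCen} (more precisely, its proof) gives $FI\perp AC$, $GH\perp AC$, and $IH=FG$, so $FGHI$ is an isosceles trapezoid and a single right angle upgrades it to a rectangle. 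The reflection argument correctly reduces everything to the one scalar identity $y_F=y_G$ under the constraint $s+t=2h$, which is a genuine simplification: instead of verifying four right angles for four barycentrically-defined points, one verifies one polynomial identity involving only the two triangles on one side of the axis. What your proposal does not supply is precisely that identity: the step where the specific center $X_{586}$ (as opposed to an arbitrary center, for which the conclusion is false) interacts with the equidiagonal condition is still handed off to Mathematica. Since that is exactly the paper's own standard of proof for this theorem, your write-up is an acceptable --- indeed more informative --- version of the argument, but you should be explicit that the symbolic verification is the load-bearing step and must actually be carried out, not merely described.
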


\newpage

\section{Results Using the Poncelet Point}
\label{section:PonceletPoint}

The \emph{Poncelet point} (sometimes called the Euler-Poncelet point) of a quadrilateral
is the common point of the nine-point circles of the component triangles (half-triangles)
of the quadrilateral. A triangle formed from three vertices of a quadrilateral is called
a \emph{component triangle} of that quadrilateral.
The \emph{nine-point circle} of a triangle is the circle through the midpoints
of the sides of that triangle.

Figure \ref{fig:ppPonceletPoint} shows the Poncelet point of quadrilateral $ABCD$.
The yellow points represent the midpoints of the sides and diagonals of the quadrilateral.
The component triangles are $BCD$, $ACD$, $ABD$, and $ABC$.
The blue circles are the nine-point circles of these triangles.
The common point of the four circles is the Poncelet point (shown in green).

\begin{figure}[h!t]
\centering
\includegraphics[width=0.4\linewidth]{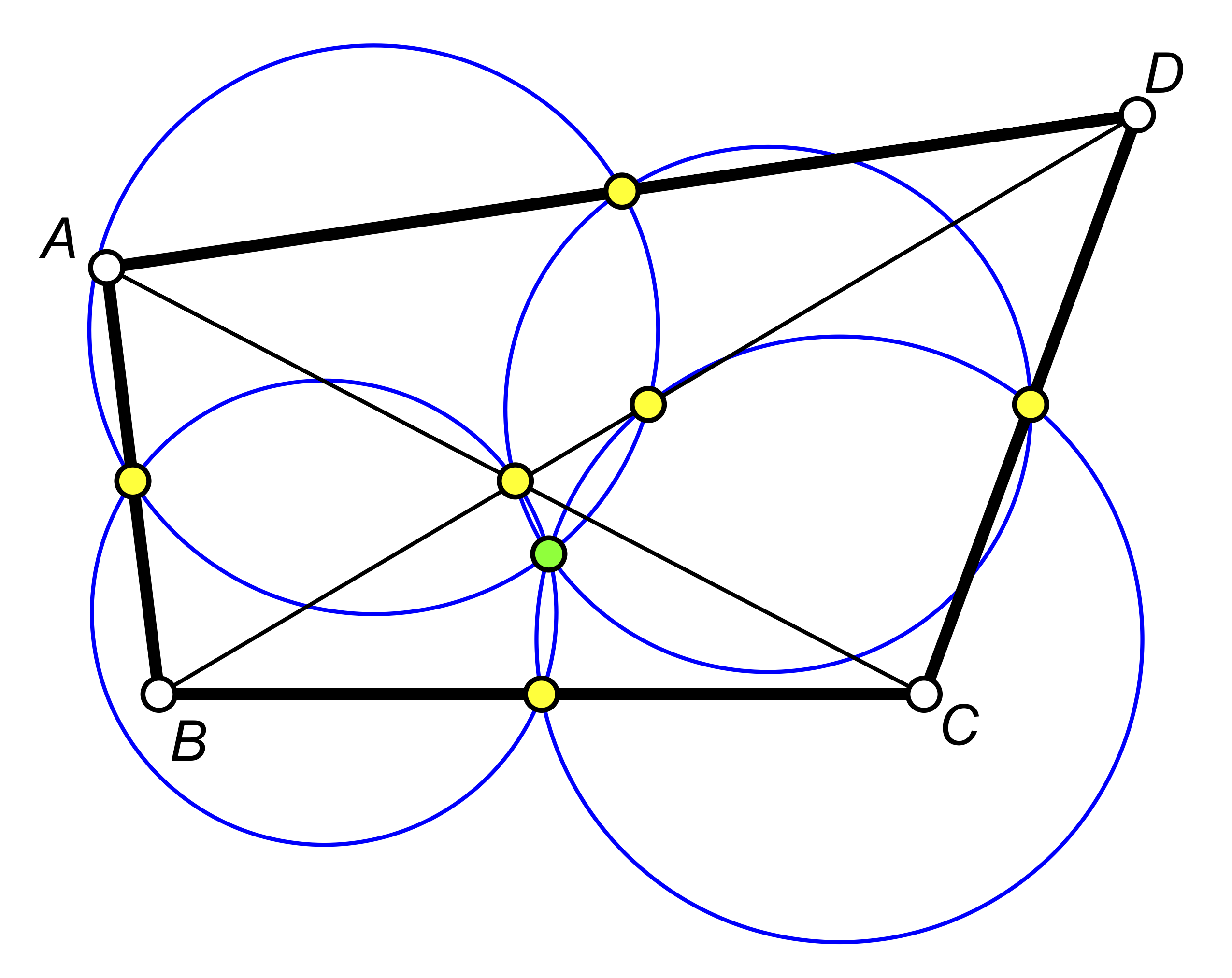}
\caption{The Poncelet point of quadrilateral $ABCD$}
\label{fig:ppPonceletPoint}
\end{figure}

In this section, we study the case where point $E$ is the Poncelet point of the quadrilateral.
Results that are true when point $E$ is arbitrary are omitted.

The following two propositions come from \cite{relationships}.

\begin{proposition}
\label{prop:ppParallelogram}
The Poncelet point of a parallelogram coincides with the diagonal point.
\end{proposition}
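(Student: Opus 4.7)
The plan is to verify that the diagonal point $O = AC \cap BD$ lies on each of the four nine-point circles whose common intersection defines the Poncelet point. Since the Poncelet point is by definition the unique point shared by these circles, showing $O$ belongs to all four will identify it with the Poncelet point.

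The essential observation is that in a parallelogram the diagonals bisect each other, so $O$ is simultaneously the midpoint of diagonal $AC$ and the midpoint of diagonal $BD$. Crucially, although $AC$ and $BD$ are diagonals of the quadrilateral $ABCD$, they appear as \emph{sides} of the component triangles: $AC$ is a side of both $\triangle ABC$ and $\triangle ACD$, while $BD$ is a side of both $\triangle ABD$ and $\triangle BCD$.

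From there, one invokes the standard fact that the nine-point circle of a triangle passes through the midpoints of its three sides. Applying this to each component triangle in turn, $O$ lies on the nine-point circle of $\triangle ABC$ (as the midpoint of side $AC$), on the nine-point circle of $\triangle ACD$ (again as midpoint of $AC$), on the nine-point circle of $\triangle ABD$ (as midpoint of $BD$), and on the nine-point circle of $\triangle BCD$ (again as midpoint of $BD$). Therefore $O$ is a point common to all four nine-point circles, and hence equals the Poncelet point.

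The proof is essentially immediate once the correct correspondence between diagonals of the parallelogram and sides of the component triangles is noticed; there is no significant obstacle. The only subtlety worth mentioning is that existence and uniqueness of the Poncelet point is assumed from its definition as the common intersection of the four nine-point circles, so no separate verification is needed.
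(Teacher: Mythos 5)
Your proof is correct. Note that the paper itself supplies no argument for this proposition, merely attributing it to the earlier paper \cite{relationships}, so your write-up is a self-contained alternative rather than a restatement of anything printed here. The key observation --- that the diagonals of the parallelogram bisect each other at $O$, and that each diagonal occurs as a \emph{side} of exactly two of the four component triangles, whence $O$ is a side-midpoint of every component triangle and therefore lies on all four nine-point circles --- is exactly the right mechanism, and it meshes with this paper's stated definition of the nine-point circle as the circle through the side midpoints. The only point you wave at (uniqueness of the common point) is also easy to nail down if desired: the circles of $\triangle ABC$ and $\triangle ABD$ can share only $O$ and the midpoint of $AB$, while those of $\triangle ACD$ and $\triangle BCD$ can share only $O$ and the midpoint of $CD$, and these two midpoints differ, so $O$ is the only candidate for a point common to all four circles.
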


\begin{proposition}
\label{prop:ppOrtho}
The Poncelet point of an orthodiagonal quadrilateral coincides with the diagonal point.
\end{proposition}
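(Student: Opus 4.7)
The plan is to show directly that the diagonal point lies on each of the four nine-point circles whose common point defines the Poncelet point. Let $P = AC \cap BD$ be the diagonal point of the orthodiagonal quadrilateral $ABCD$, so $AC \perp BD$ at $P$. I want to verify that $P$ belongs to the nine-point circle of each of the component triangles $BCD$, $ACD$, $ABD$, $ABC$; since the Poncelet point is defined as the common intersection of these four circles, this will force $P$ to coincide with it.

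The key observation, which I would apply to each component triangle in turn, is that $P$ is the foot of one of its altitudes. For example, in $\triangle BCD$, the vertex $C$ lies on line $AC$ which is perpendicular to side $BD$ at $P$; hence $CP \perp BD$ with $P \in BD$, so $P$ is the foot of the altitude from $C$. In $\triangle ACD$, the point $P$ lies on side $AC$, and $DP$ is a segment of the line $BD$, which is perpendicular to $AC$; thus $P$ is the foot of the altitude from $D$ to $AC$. The remaining two triangles $\triangle ABD$ and $\triangle ABC$ are handled by the symmetric statements (altitude from $A$ and altitude from $B$, respectively). Since the nine-point circle of a triangle passes through the feet of its three altitudes, $P$ lies on each of the four nine-point circles.

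The final step is just to invoke the definition: the Poncelet point of $ABCD$ is the common point of the nine-point circles of its four component triangles, so $P$ must equal this point. I do not anticipate any real obstacle here; the argument is essentially a one-line observation about perpendicularity once the foot-of-altitude characterization is noted, and no case analysis depending on the position of $P$ inside or outside a particular triangle is required because the nine-point circle passes through the feet of the altitudes regardless of whether the triangle is acute or obtuse.
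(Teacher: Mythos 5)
Your argument is correct. Note that the paper itself does not prove this proposition; it simply imports it (together with the companion statement about parallelograms) from the earlier paper \cite{relationships}, so there is no internal proof to compare against. Your direct verification is a clean, self-contained justification: since $AC\perp BD$ at the diagonal point $P$, the point $P$ is the foot of an altitude in each of the four component triangles (from $C$ in $\triangle BCD$, from $D$ in $\triangle ACD$, from $A$ in $\triangle ABD$, from $B$ in $\triangle ABC$), and the nine-point circle passes through the altitude feet whether the triangle is acute or obtuse, so $P$ lies on all four circles. The only step worth tightening is the final inference from ``$P$ is \emph{a} common point'' to ``$P$ is \emph{the} Poncelet point'': any two component triangles share a side, so their nine-point circles already meet at the midpoint of that side as well as at $P$; a second point common to all four circles would therefore have to coincide with the midpoints of all six sides and diagonals simultaneously, which is impossible for a nondegenerate quadrilateral. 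Hence $P$ is the unique common point and is the Poncelet point, completing your proof.
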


Because of these propositions, we will not discuss parallelograms or orthodiagonal quadrilaterals in this section
because they have been covered in Section 5 of \cite{shapes}.

Our computer study found a few results when the quadrilateral is not a parallelogram or orthodiagonal.
They are shown in the following table.

\bigskip
\begin{center}
\begin{tabular}{|l|p{3.3in}|}
\hline
\multicolumn{2}{|c|}{\textbf{\color{blue}\large \strut Central Quads of Hjelmslev Quadrilaterals}}\\ \hline
\textbf{Shape of central quad}&\textbf{centers}\\ \hline
\ru trapezoid&3, 69\\ \hline
\ru tangential trapezoid&4\\ \hline
\end{tabular}
\end{center}

\begin{lemma}
\label{lemma-ppHjelmslevMidpoint}
Let $E$ be the Poncelet point of quadrilateral $ABCD$ that has right angles at $B$ and $D$.
Then $E$ is the midpoint of $BD$.
\end{lemma}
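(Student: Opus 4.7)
The plan is to identify the midpoint $M$ of $BD$ as a common point of the four nine-point circles whose common intersection defines the Poncelet point; since the Poncelet point is by definition that common intersection, this will force $E = M$.

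First I would observe that the hypotheses $\angle ABC = \angle ADC = 90^\circ$ place both $B$ and $D$ on the Thales circle with diameter $AC$. Hence $ABCD$ is cyclic, its circumradius equals $R = |AC|/2$, and its circumcenter $O$ is the midpoint of $AC$. This simple Thales observation is really the only nontrivial geometric step; the rest is bookkeeping.

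Next I would invoke the standard vector formula for the nine-point center of a triangle inscribed in a circle. Taking $O$ as the origin so that $|A|=|B|=|C|=|D|=R$, the nine-point center of a triangle with vertices $V_1,V_2,V_3$ on the circle of center $O$ equals $\tfrac12(V_1+V_2+V_3)$, with nine-point radius $R/2$. Applied to the four component triangles of $ABCD$, the four nine-point centers are $\tfrac12(B+C+D)$, $\tfrac12(A+C+D)$, $\tfrac12(A+B+D)$, and $\tfrac12(A+B+C)$, and all four nine-point circles have the common radius $R/2$.

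Finally, because $O$ is the midpoint of $AC$ and lies at the origin, $A+C = \mathbf{0}$, so $M = \tfrac12(B+D) = \tfrac12(A+B+C+D)$ (the anticenter of the cyclic quadrilateral). Subtracting each of the four nine-point centers from $M$ yields $\pm\tfrac12 A$, $\pm\tfrac12 B$, $\pm\tfrac12 C$, $\pm\tfrac12 D$, each having length $R/2$. Hence $M$ lies on all four nine-point circles, so $M$ is the Poncelet point $E$ of $ABCD$. The whole argument is essentially a one-line vector subtraction once the right angles at $B$ and $D$ have been translated, via Thales, into the statement that $AC$ is a diameter of the circumcircle.
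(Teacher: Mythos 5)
Your proof is correct. The paper itself states Lemma~\ref{lemma-ppHjelmslevMidpoint} without any proof, so there is no argument to compare against; your write-up actually supplies the missing justification. The chain of reasoning is sound: Thales gives that $AC$ is a diameter of the circumcircle, the vector formula $N=\tfrac12(V_1+V_2+V_3)$ for the nine-point center of a triangle inscribed in a circle centered at the origin is standard (midpoint of circumcenter and orthocenter, with nine-point radius $R/2$), and the subtraction showing that the anticenter $\tfrac12(A+B+C+D)$ lies at distance $R/2$ from each of the four nine-point centers is exactly right; with $A+C=\mathbf{0}$ this anticenter is the midpoint of $BD$. The only point worth making explicit is uniqueness: you show $M$ is \emph{a} common point of the four nine-point circles, and you should note that the four circles have only one common point (their centers form a quadrilateral homothetic to $ABCD$ with ratio $-\tfrac12$, hence are not collinear, so no two distinct points can lie on all four circles), which then forces $E=M$. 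In effect you have reproved the classical fact that the Poncelet point of a cyclic quadrilateral is its anticenter and then specialized it; that is a perfectly good route.
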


\begin{theorem}
\label{theorem:gqDiagX3}
Let $E$ be any point on diagonal $BD$ of quadrilateral $ABCD$.
Let $F$, $G$, $H$, and $I$ be the $X_{3}$ points of $\triangle EAB$, $\triangle EBC$, 
$\triangle ECD$, and $\triangle EDA$, respectively (Figure~\ref{fig:gqDiagX3}).
Then $FGHI$ is a trapezoid with $FG\parallel HI$.
\end{theorem}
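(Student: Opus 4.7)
The plan is to use the defining property of the circumcenter—equidistance from the three vertices—to force $FG$ and $HI$ both to be perpendicular to line $BD$, which gives $FG\parallel HI$ immediately.

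First I would observe that $F$, the circumcenter of $\triangle EAB$, is equidistant from $E$ and $B$, and $G$, the circumcenter of $\triangle EBC$, is also equidistant from $E$ and $B$. Hence both $F$ and $G$ lie on the perpendicular bisector of segment $EB$, so line $FG$ coincides with that perpendicular bisector and is in particular perpendicular to line $EB$. Because $E$ lies on diagonal $BD$, the line $EB$ is a subset of line $BD$, so $FG\perp BD$.

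Next I would run the symmetric argument on the other side. Both $H$ (circumcenter of $\triangle ECD$) and $I$ (circumcenter of $\triangle EDA$) are equidistant from $E$ and $D$, so they lie on the perpendicular bisector of $ED$, giving $HI\perp ED$. Again, since $E\in BD$, line $ED$ lies along $BD$, so $HI\perp BD$. Combining, $FG$ and $HI$ are two lines both perpendicular to $BD$, hence parallel to each other, which shows $FGHI$ is a trapezoid with $FG\parallel HI$.

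There is essentially no obstacle: the only mild subtlety is that one must note $F\neq G$ and $H\neq I$ so that the lines $FG$ and $HI$ are actually well defined. Nondegeneracy follows because $A$ and $C$ lie on opposite sides of line $BD$ (as $ABCD$ is convex), which forces the circumcenter $F$ of $\triangle EAB$ and the circumcenter $G$ of $\triangle EBC$ to lie on opposite sides of $BD$ as well (each sits on the perpendicular bisector of $EB$ on the same side as the respective third vertex), and analogously for $H$ and $I$. The same separation also confirms that $FGHI$ is a genuine (nondegenerate) trapezoid rather than collapsing to a segment.
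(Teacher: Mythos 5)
Your proof is correct and follows essentially the same route as the paper's: both arguments place $F$ and $G$ on the perpendicular bisector of $EB$ and $H$ and $I$ on that of $ED$, then use $E\in BD$ to conclude both lines are perpendicular to $BD$ and hence parallel. Your added remarks on nondegeneracy (that $A$ and $C$ on opposite sides of $BD$ keep $F\neq G$ and prevent collapse) are a reasonable supplement the paper omits.
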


\begin{figure}[h!t]
\centering
\includegraphics[width=0.4\linewidth]{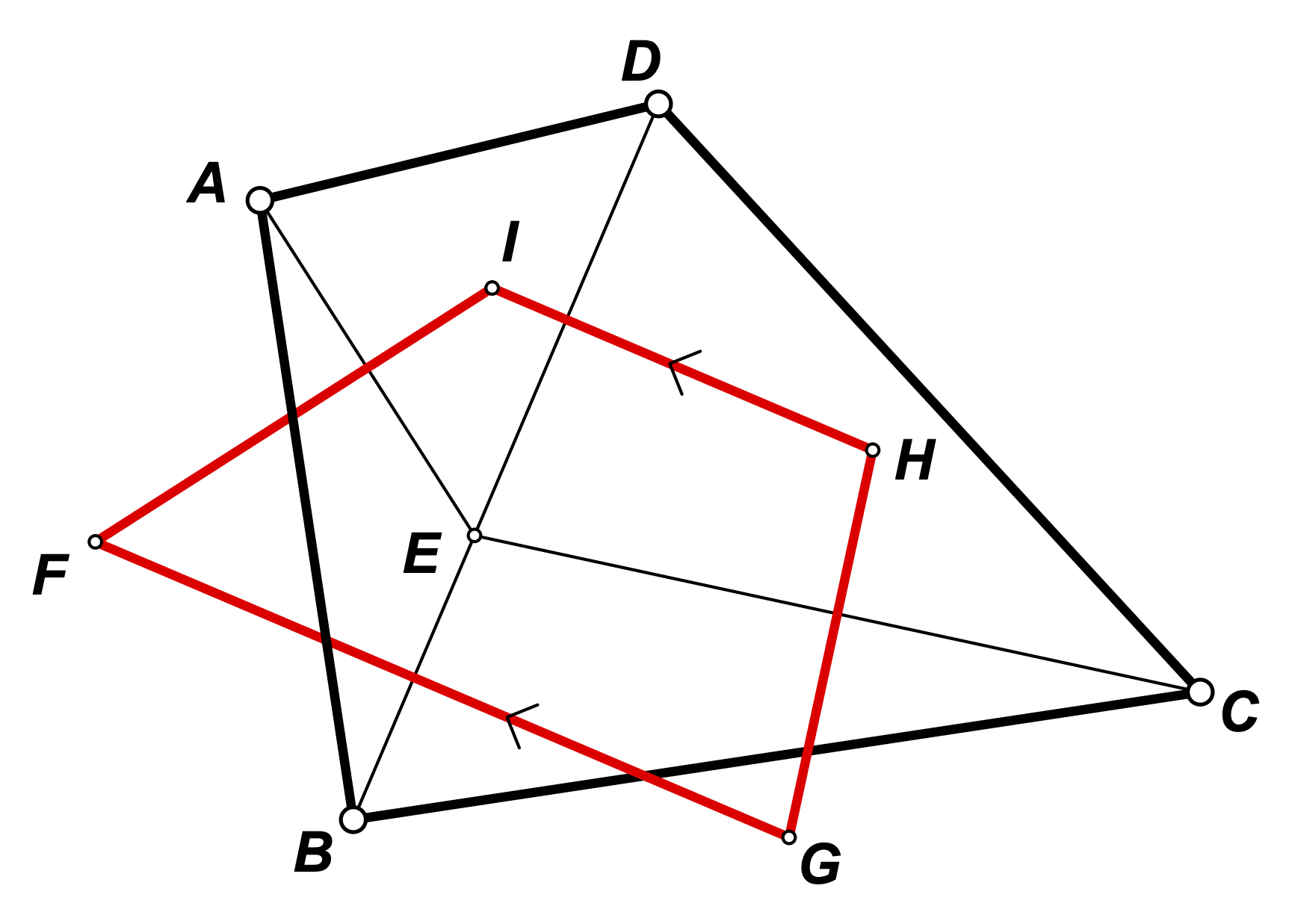}
\caption{$E\in BD$, $X_3$-points, $\implies FG\parallel HI$}
\label{fig:gqDiagX3}
\end{figure}

\begin{proof}
Point $F$ is the circumcenter of $\triangle ABE$, so $F$ lies on the perpendicular bisector of $BE$. Similarly $G$ lies on the perpendicular bisector of $BE$. Hence $FG\perp BE$. Similarly $IH\perp DE$. Since $E$ lies on $BD$, we have that $FG$ and $IH$ are perpendicular to $BD$.
Therefore $FG\parallel IH$. 
\end{proof}

\begin{theorem}
\label{theorem:hjPonX3}
Let $E$ be the Poncelet point of a Hjelmslev quadrilateral $ABCD$.
Let $F$, $G$, $H$, and $I$ be the $X_{3}$ points of $\triangle EAB$, $\triangle EBC$, 
$\triangle ECD$, and $\triangle EDA$, respectively.
Then $FGHI$ is a trapezoid with $FG\parallel HI$.
\end{theorem}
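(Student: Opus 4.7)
The plan is to chain together the two results immediately preceding the statement: Lemma \ref{lemma-ppHjelmslevMidpoint} locates the Poncelet point on a specific diagonal, and Theorem \ref{theorem:gqDiagX3} then converts ``$E$ on that diagonal'' into exactly the parallelism $FG\parallel HI$, with no relabeling.

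First I would invoke Lemma \ref{lemma-ppHjelmslevMidpoint} with the Hjelmslev quadrilateral labeled so that its two opposite right angles sit at $B$ and $D$ (the labeling demanded by the lemma as stated). This gives $E=$ midpoint of $BD$; in particular $E$ lies on the diagonal $BD$ of $ABCD$. Second, since the hypothesis of Theorem \ref{theorem:gqDiagX3} is now met verbatim for the same vertex labels, I would apply that theorem directly to obtain $FG\parallel HI$. The underlying mechanism, already used inside the proof of Theorem \ref{theorem:gqDiagX3}, is that $F$ and $G$ are both equidistant from $B$ and $E$ (so $FG\perp BE$) while $H$ and $I$ are both equidistant from $D$ and $E$ (so $HI\perp DE$); collinearity of $B$, $E$, $D$ makes these two perpendicular directions coincide, forcing $FG\parallel HI$.

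The delicate point, and the reason a cyclic relabeling is to be avoided, is that Lemma \ref{lemma-ppHjelmslevMidpoint}, Theorem \ref{theorem:gqDiagX3}, and the stated conclusion $FG\parallel HI$ are all phrased consistently only under the convention ``right angles at $B$ and $D$''. Rotating the labels so that the right angles sit at $A$ and $C$ would cycle the four circumcenters and rotate the obtained parallelism to the other pair of opposite sides $GH\parallel IF$, which is not what the statement asks for. The main ``obstacle'' is therefore not computational at all but notational: I would open the proof with an explicit sentence fixing the labeling so that the two cited results compose without any permutation, after which the argument is a two-line citation chain.
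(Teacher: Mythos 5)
Your proof is correct and is essentially identical to the paper's: the paper also simply invokes Lemma~\ref{lemma-ppHjelmslevMidpoint} to place $E$ on $BD$ and then cites Theorem~\ref{theorem:gqDiagX3}. Your extra care about fixing the labeling so the right angles sit at $B$ and $D$ is a sensible refinement that the paper leaves implicit.
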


\begin{proof}
By Lemma~\ref{lemma-ppHjelmslevMidpoint}, $E$ is the midpoint of $BD$.
Then, by Theorem~\ref{theorem:gqDiagX3}, $FGHI$ is a trapezoid with $FG\parallel HI$.
\end{proof}

\begin{theorem}
\label{theorem:gqDiagX4}
Let $E$ be any point on diagonal $BD$ of quadrilateral $ABCD$.
Let $F$, $G$, $H$, and $I$ be the $X_{4}$ points of $\triangle EAB$, $\triangle EBC$, 
$\triangle ECD$, and $\triangle EDA$, respectively (Figure~\ref{fig:gqDiagX4}).
Then $FHGI$ is a trapezoid with $FI\parallel HG$.
\end{theorem}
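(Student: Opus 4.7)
The plan is to show that both $F,I$ lie on a common line perpendicular to $BD$, and similarly for $G,H$, so that $FI\parallel HG$, which gives the trapezoid $FHGI$.

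First I would focus on $F$ and $I$. Recall that the orthocenter of a triangle lies on each of its three altitudes. In $\triangle EAB$, one altitude is the line through vertex $A$ perpendicular to side $EB$. In $\triangle EDA$, one altitude is the line through vertex $A$ perpendicular to side $ED$. Since $E$ lies on segment $BD$, the sides $EB$ and $ED$ of these two triangles both lie on line $BD$. Hence the altitude from $A$ in $\triangle EAB$ and the altitude from $A$ in $\triangle EDA$ coincide: both are the unique line through $A$ perpendicular to $BD$. Therefore $F$ and $I$ lie on this common line, so $FI\perp BD$.

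Next, the same argument applied at vertex $C$ in $\triangle EBC$ and $\triangle ECD$ shows that $G$ and $H$ both lie on the line through $C$ perpendicular to $BD$; hence $HG\perp BD$. Combining the two conclusions, $FI\parallel HG$, so quadrilateral $FHGI$ has a pair of parallel opposite sides and is therefore a trapezoid.

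There is essentially no obstacle here; the whole argument rests on the single observation that since $E$, $B$, $D$ are collinear, the altitudes from $A$ in the two triangles sharing side $EA$ fall along one line (and similarly at $C$). This parallels the circumcenter proof given just above for Theorem~\ref{theorem:gqDiagX3}, and no case analysis or computation is required.
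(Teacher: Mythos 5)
Your proof is correct and is essentially identical to the paper's: both arguments observe that the orthocenters $F$ and $I$ lie on the altitude from $A$, which for each triangle is the same line through $A$ perpendicular to $BD$ (since $E\in BD$), and similarly $G$ and $H$ lie on the perpendicular from $C$ to $BD$, giving $FI\parallel GH$. No differences worth noting.
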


\begin{figure}[h!t]
\centering
\includegraphics[width=0.4\linewidth]{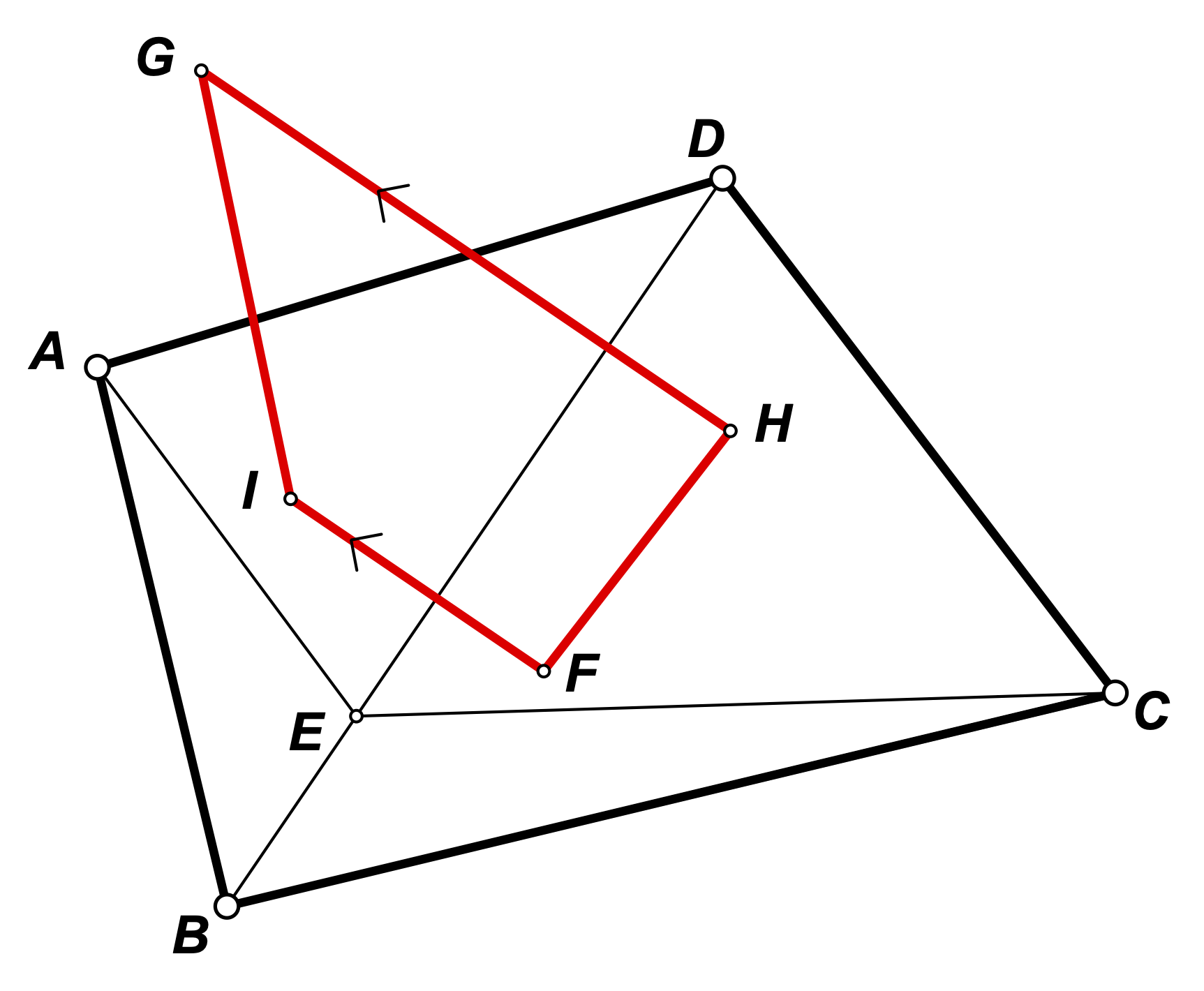}
\caption{$E\in BD$, $X_4$-points, $\implies FI\parallel HG$}
\label{fig:gqDiagX4}
\end{figure}

\begin{proof}
Point $F$ is the orthocenter of $\triangle ABE$, so $AF\perp BE$.
Point $I$ is the orthocenter of $\triangle ADE$, so $AI\perp DE$.
Since $E$ lies on $BD$, we have that $AF$ and $AI$ are both perpendicular to $BD$.
It follows that $FI\perp BD$ since $F$ and $I$ lie on the perpendicular from $A$ to $BD$.
In a similar way, it can be proved that $GH\perp BD$.
Therefore $FI \parallel GH$.
\end{proof}

\newpage

\begin{theorem}
\label{thm:hjPonX4}
Let $E$ be the Poncelet point of a Hjelmslev quadrilateral $ABCD$.
Let $F$, $G$, $H$, and $I$ be the $X_{4}$ points of $\triangle EAB$, $\triangle EBC$, 
$\triangle ECD$, and $\triangle EDA$, respectively (Figure~\ref{fig:hjPonX4}).
Then $FGHI$ is a tangential trapezoid with incenter $E$ and $FI\parallel GH$.
Also, diagonal $BD$ passes through the points of contact of the incircle with sides $GH$ and $FI$.
\end{theorem}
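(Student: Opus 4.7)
The plan is to show that $E$ is equidistant from all four sides of $FGHI$ by reducing the distance computations for the two non-parallel sides to short arguments in right triangles at $E$. First I would invoke Lemma~\ref{lemma-ppHjelmslevMidpoint} to place $E$ at the midpoint of $BD$, and then apply Theorem~\ref{theorem:gqDiagX4} to obtain $FI\parallel GH$, with $FI$ lying along the perpendicular from $A$ to $BD$ and $GH$ along the perpendicular from $C$ to $BD$; let $A'$ and $C'$ denote these two feet.

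Because $\angle B = \angle D = 90\degrees$, the quadrilateral $ABCD$ is cyclic with $AC$ as diameter; its circumcenter $O$, being the midpoint of $AC$, lies on the perpendicular bisector of chord $BD$, which meets $BD$ at $E$. Projecting $O$ orthogonally onto $BD$ therefore gives $E$, and since orthogonal projection preserves midpoints, $E$ is also the midpoint of $A'C'$. Set $r = EA' = EC'$. The two perpendiculars from $E$ to the parallel sides $FI$ and $GH$ then lie along $BD$ and each has length $r$, and their feet are $A'$ and $C'$; this already yields the final claim that $BD$ contains the two contact points on the parallel sides.

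The central step, which I expect to be the main obstacle, is to show that the distances from $E$ to the non-parallel sides $FG$ and $HI$ also equal $r$. The key insight is that the right angle at $B$ in the quadrilateral propagates to a right angle $\angle FEG$ at $E$: since $F$ is the orthocenter of $\triangle ABE$, the segment $EF$ lies on the altitude from $E$, so $EF \perp AB$; similarly $EG \perp BC$; and $AB \perp BC$ then forces $EF \perp EG$. The analogous argument using the right angle at $D$ gives $\angle HEI = 90\degrees$. A short trigonometric computation in the right triangles $EA'F$ and $EC'G$ (with $\alpha = \angle ABD$) yields $|EF| = r/\sin\alpha$ and $|EG| = r/\cos\alpha$, so the altitude from the right-angle vertex $E$ to the hypotenuse of $\triangle FEG$ has length $|EF|\,|EG|/|FG| = r$. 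A symmetric computation handles $HI$.

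Once $E$ is shown to be equidistant (at distance $r$) from each of the four sides and interior to the trapezoid, it must be the incenter, so $FGHI$ is tangential with inradius $r$. The feet of the perpendiculars from $E$ to $FI$ and $GH$ are $A'$ and $C'$, both of which lie on $BD$, confirming that $BD$ passes through the two contact points of the incircle with the parallel sides and completing the proof.
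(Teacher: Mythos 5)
Your argument is essentially correct, and it is worth noting that it does something the paper does not: the paper states Theorem~\ref{thm:hjPonX4} without any proof in the text (the result rests on the symbolic computation described in the methodology section) and explicitly poses ``Is there a purely geometric proof of Theorem~\ref{thm:hjPonX4}?'' as an open question. Your synthetic proof, if written out in full, answers that question affirmatively. The skeleton is sound: Lemma~\ref{lemma-ppHjelmslevMidpoint} and Theorem~\ref{theorem:gqDiagX4} place $F,I$ on the perpendicular from $A$ to $BD$ with foot $A'$ and $G,H$ on the perpendicular from $C$ to $BD$ with foot $C'$; since $ABCD$ is cyclic with diameter $AC$, the circumcenter $O$ (midpoint of $AC$) projects onto $BD$ at the midpoint $E$ of $BD$, and because orthogonal projection is affine, $E$ is the midpoint of $A'C'$, giving $EA'=EC'=r$ and hence distance $r$ from $E$ to both parallel sides with feet on $BD$. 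The treatment of the oblique sides is the genuinely new ingredient: $EF\perp AB$ and $EG\perp BC$ (altitudes from $E$ in the two radial triangles) together with $\angle ABC=90\degrees$ force $\angle FEG=90\degrees$, and with $\alpha=\angle ABD$ one gets $EF=r/\sin\alpha$, $EG=r/\cos\alpha$, $FG=r/(\sin\alpha\cos\alpha)$, so the altitude from $E$ in right triangle $FEG$ is $EF\cdot EG/FG=r$; the right angle at $D$ handles $HI$ identically. I have checked these computations in coordinates and they are correct.

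Two small points deserve attention before this is a complete proof. First, equidistance of $E$ from the four side \emph{lines} makes $E$ the incenter only if $E$ actually lies in the interior of $FGHI$ (otherwise the circle of radius $r$ could be tangent to extensions of the sides rather than the sides themselves); this should be argued, e.g., by locating $F,G,H,I$ relative to the strip between the lines $AA'$ and $CC'$ and relative to $BD$. Second, the argument tacitly assumes the nondegeneracy conditions $0<\alpha<90\degrees$, $A'\neq E\neq C'$ (i.e., $ABCD$ not orthodiagonal, which the paper already excludes in this section), and $F\neq E$, $G\neq E$, etc. Neither issue threatens the main computation, but both should be addressed if this is to stand as a resolution of the paper's open question.
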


Note that $E$ is the midpoint of $BD$ by Lemma~\ref{lemma-ppHjelmslevMidpoint}.

\begin{figure}[h!t]
\centering
\includegraphics[width=0.5\linewidth]{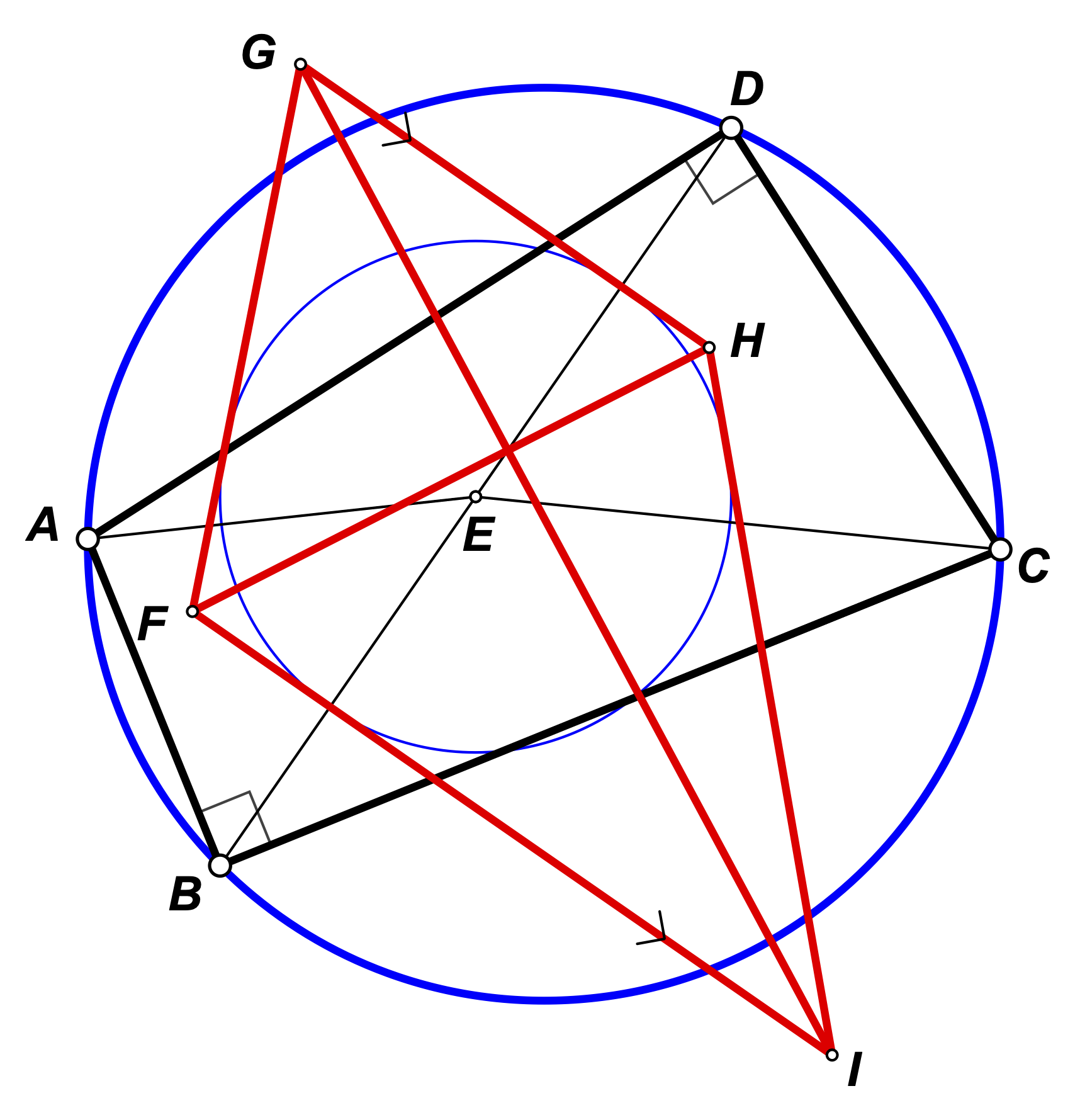}
\caption{$X_4$-points $\implies$ $FGHI$ is a tangential trapezoid}
\label{fig:hjPonX4}
\end{figure}

\begin{open}
Is there a purely geometric proof of Theorem~\ref{thm:hjPonX4}?
\end{open}

\begin{open}
If $E$ is the Poncelet point of a Hjelmslev quadrilateral $ABCD$, and the central quadrilateral
is a tangential trapezoid, must the center be the orthocenter?
\end{open}

\newpage


\section{Results Using the Area Centroid}
\label{section:areaCentroid}

The \emph{Area Centroid} (also called the \emph{quasi centroid} or 1st QG quasi centroid) of a convex quadrilateral
is the the center of mass of the quadrilateral when its surface is made of some evenly distributed material.

Geometrically, it is the intersection of the diagonals of the centroid quadrilateral of the given quadrilateral.

A triangle formed from three vertices of a quadrilateral is called
a \emph{component triangle} of that quadrilateral.

The quadrilateral whose vertices are the centroids of the four component triangles of
a quadrilateral is called the \emph{centroid quadrilateral} of that quadrilateral.

Figure \ref{fig:quasiCentroid} shows the area centroid of quadrilateral $ABCD$.
The yellow points represent the centroids of the component triangles of the quadrilateral.
The component triangles are $BCD$, $ACD$, $ABD$, and $ABC$.
The blue region is the centroid quadrilateral $G_AG_BG_CG_D$.
The red point is the area centroid.

\begin{figure}[h!t]
\centering
\includegraphics[width=0.5\linewidth]{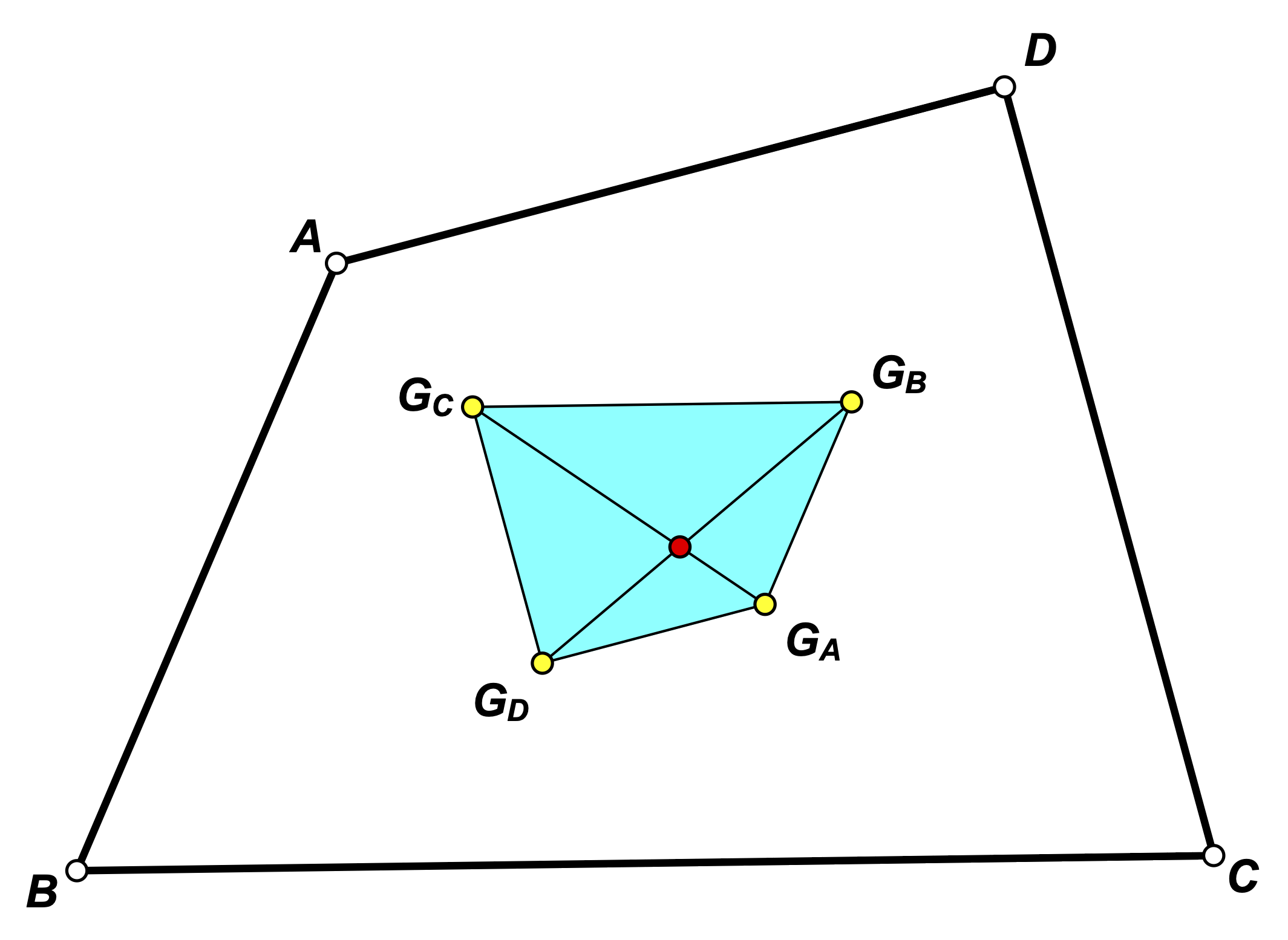}
\caption{The area centroid of quadrilateral $ABCD$}
\label{fig:quasiCentroid}
\end{figure}

In this section, we study the case where point $E$ is the area centroid of the quadrilateral.
Results that are true when point $E$ is arbitrary are omitted.

If $ABCD$ is a kite, with $AB=AD$ and $CB=CD$, then by symmetry,
the area centroid lies on diagonal $AC$. By Theorem~\ref{thm:kiCen},
the central quadrilateral of $ABCD$ is an isosceles trapezoid.

Our computer study did not find any new results, other than the ones that are
true when $E$ is an arbitrary point or when the quadrilateral is a kite.

\begin{center}
\begin{tabular}{|l|p{2.2in}|}
\hline
\multicolumn{2}{|c|}{\textbf{\color{blue}\large \strut Central Quadrilaterals of all Quadrilaterals}}\\ \hline
\multicolumn{2}{|c|}{No new relationships were found.}\\ \hline
\end{tabular}
\end{center}

\newpage

\section{Areas for Future Research}

There are many avenues for future investigation.

\subsection{Investigate other triangle centers}\ \\

In our study, we only investigated triangle centers $X_n$ for $n\leq 1000$.
Extend this study to larger values of $n$.

\subsection{Use other shape quadrilaterals}\ \\

In our investigation, we only studied 28 shapes of quadrilaterals as shown in Figure~\ref{fig:quadShapes}.
There are many other shapes of quadrilaterals. Study these other shapes.

For example, we did find some results associated with a right kite
when point $E$ is the area centroid of quadrilateral $ABCD$.
A \emph{right kite} is a kite in which two opposite angles are right angles.

\void{
\bigskip
\begin{center}
\begin{tabular}{|l|p{2.5in}|}
\hline
\multicolumn{2}{|c|}{\textbf{\color{blue}\large \strut Central Quadrilateral of a Right Kite}}\\ \hline
\textbf{Shape of central quadrilateral}&\textbf{center}\\ \hline
\ru rectangle&68, 317, 577\\ \hline
\end{tabular}
\end{center}
}

\begin{theorem}
\label{thm:rkQuasiX68}
Let $E$ be the area centroid of right kite $ABCD$.
Let $n$ be 68, 317, or 577.
Let $F$, $G$, $H$, and $I$ be the $X_{n}$-points of triangles $\triangle ABE$, $\triangle BCE$, $\triangle CDE$,
and $\triangle DAE$, respectively. (Figure~\ref{fig:rkQuasiX68} shows the case where $n=68$.)
Then $FGHI$ is a rectangle. The sides of the rectangle are parallel to the diagonals of the right kite.
\end{theorem}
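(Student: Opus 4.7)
The plan is to reduce the problem to Theorem~\ref{thm:kiCen} plus a single auxiliary identity, and then to verify that identity by a direct barycentric computation. The right kite $ABCD$ with $AB=AD$, $CB=CD$ and $\angle B=\angle D=90\degrees$ has the diagonal $AC$ as an axis of symmetry; the area centroid is fixed by the reflection in $AC$, so $E$ lies on $AC$. By Theorem~\ref{thm:kiCen}, $FGHI$ is already an isosceles trapezoid with $FI\perp AC$, $GH\perp AC$, and with $IH$ the reflection of $FG$ across $AC$. Since the diagonals of a kite are perpendicular, $BD\perp AC$, so it will follow that $FGHI$ is a rectangle with sides parallel to the diagonals of $ABCD$ as soon as we show the single extra condition $FG\parallel AC$.

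Next I would introduce coordinates with $AC$ on the $x$-axis and $BD$ on the $y$-axis, say $A=(a,0)$, $C=(-c,0)$, $B=(0,h)$, $D=(0,-h)$. The right-angle condition at $B$ becomes $h^2=ac$, and a short calculation gives the area centroid $E=((a-c)/3,\,0)$. The key observation is that in both radial triangles $\triangle ABE$ and $\triangle BCE$, the vertex $B$ is the unique vertex off the line $AC$; consequently, for any point $P$ expressed in barycentric coordinates with respect to $\triangle ABE$ ordered $(A,B,E)$, or with respect to $\triangle BCE$ ordered $(B,C,E)$, the $y$-coordinate of $P$ equals $h$ times its normalized barycentric coordinate at the vertex $B$. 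Therefore $FG\parallel AC$ reduces to the scalar identity
\[
\frac{v_1}{u_1+v_1+w_1} \;=\; \frac{u_2}{u_2+v_2+w_2},
\]
where $(u_1:v_1:w_1)$ and $(u_2:v_2:w_2)$ are the barycentric coordinates of $X_n$ in $\triangle ABE$ and $\triangle BCE$ respectively.

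For each of $n=68$, $317$, $577$ I would then substitute Kimberling's explicit barycentric formula, evaluated at the side lengths of the two radial triangles (explicit algebraic functions of $a$, $c$, $h$), and verify the identity modulo the relation $h^2=ac$. This is a finite symbolic computation of exactly the kind the paper's methodology delegates to the accompanying Mathematica notebooks.

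The main obstacle is conceptual rather than mechanical: I do not see an a priori geometric characterization of the three centers $X_{68}$, $X_{317}$, $X_{577}$ that would explain, without computation, why they (and apparently only they, among $X_1,\dots,X_{1000}$) produce the $B$-barycentric identity above in the right-kite configuration. Finding such a characterization would presumably yield a uniform geometric proof and clarify whether the list of admissible indices is complete; in its absence, the proof falls back on a case-by-case symbolic verification.
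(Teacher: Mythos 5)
Your reduction is sound, and it is genuinely more structured than what the paper offers: the paper states Theorem~\ref{thm:rkQuasiX68} in its future-research section with no textual proof at all, relying (per its stated methodology) on exact symbolic computation in the supplementary Mathematica notebooks, i.e.\ on a direct coordinate verification of the full rectangle condition for each of $n=68$, $317$, $577$. You instead observe that the area centroid of the right kite lies on the symmetry axis $AC$, so Theorem~\ref{thm:kiCen} already yields an isosceles trapezoid with $FI$ and $GH$ perpendicular to $AC$ (hence parallel to $BD$) and $IH$ the mirror image of $FG$; this correctly isolates the one remaining fact to be checked, namely $FG\parallel AC$, and your translation of that fact into the single scalar identity $v_1/(u_1+v_1+w_1)=u_2/(u_2+v_2+w_2)$ on the $B$-barycentrics is right, since $A$, $C$, $E$ all have $y$-coordinate $0$ and $B=(0,h)$. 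Your coordinate setup checks out as well: $h^2=ac$ is the right-angle condition at $B$ and $E=((a-c)/3,0)$ is the correct area centroid. What your route buys is an explanation of \emph{why} the central quadrilateral is automatically an isosceles trapezoid with two sides parallel to $BD$ for \emph{every} center, so that the special role of $X_{68}$, $X_{317}$, $X_{577}$ is confined to one equation; what it does not buy --- as you candidly note --- is a conceptual characterization of those three centers, so the final step remains a case-by-case symbolic verification, exactly the step the paper delegates to its notebooks. The proposal is acceptable as a proof skeleton conditional on carrying out that finite computation.
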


\begin{figure}[h!t]
\centering
\includegraphics[width=0.35\linewidth]{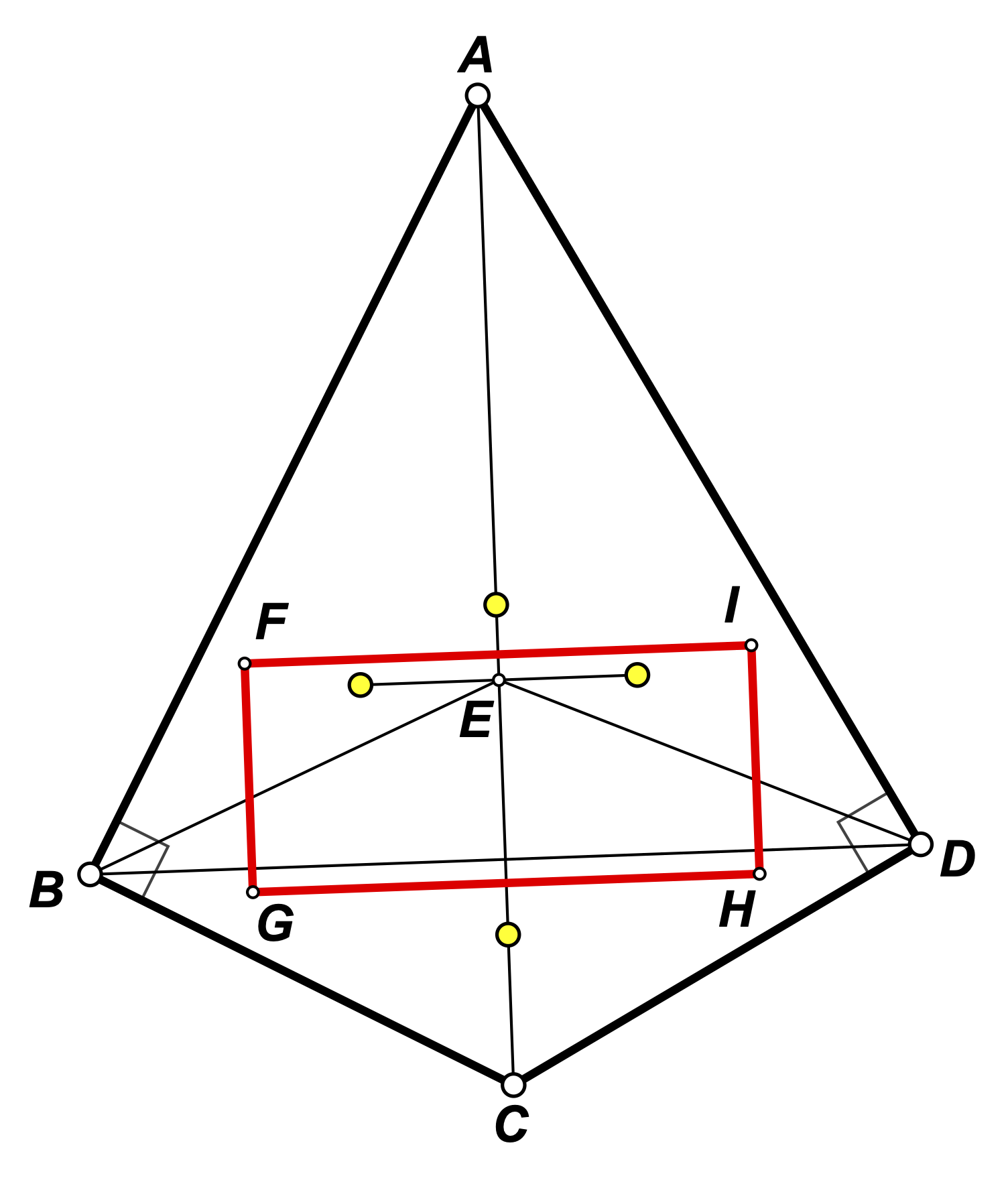}
\caption{Right Kite, $X_{68}$-points $\implies$ rectangle}
\label{fig:rkQuasiX68}
\end{figure}

\subsection{Use other quadrilateral centers}\ \\

In our study, we only used the common quadrilateral centers listed in Table~\ref{table:radiators}
as radiators.
Additional notable points associated with the reference quadrilateral  could be used, such as the Miquel point (QL-P1),
the Morley Point (QL-P2), the Newton Steiner point (QL-P7), and the various quasi points.
A list of notable points associated with a quadrilateral can be found in \cite{EQF}.

\subsection{Investigate radiators lying on quadrilateral lines}\ \\

In Section~\ref{section:misc}, we studied cases where the radiator was restricted to
lie on a certain line of symmetry of quadrilateral $ABCD$.
We could also look at cases where the radiator lies on some notable line associated with
the reference quadrilateral, 
such as a bimedian, the Newton line (QL-L1), the Steiner line (QL-L2), etc., or, in the case of cyclic quadrilaterals, the Euler line.
A list of notable lines associated with a quadrilateral can be found in \cite{EQF}.

\subsection{Ask about uniqueness}\ \\

Find an entry in one of our tables where there is only one center giving a particular relationship
for a certain type of quadrilateral. For example, in Section~\ref{section:vertexCentroid},
we found in Theorem~\ref{thm:equiCenX591} that for an equidiagonal quadrilateral, when the radiator
is the centroid of the reference quadrilateral, the central quadrilateral is orthodiagonal only when $n=591$.
Is this because we only searched the first 1000 values of $n$?
Expand the search and find other values of $n$ for which the central quadrilateral is orthodiagonal
or prove that $X_{591}$ is the unique center for which the central quadrilateral is orthodiagonal
when the radiator is the quadrilateral centroid.

\subsection{Form quadrilaterals with the Radiator}\ \\

In the current study, we formed four triangles using the radiator and two vertices of the reference quadrilateral.
Instead, form four quadrilaterals using the radiator and three vertices of the reference quadrilateral.
Then consider notable quadrilateral points in each of the four quadrilaterals formed and investigate the shape of the
quadrilateral determined by these four points.

The following results were found by computer.

\begin{theorem}
\label{thm:QgqArbCen}
Let $E$ be an arbitrary point in the plane of quadrilateral $ABCD$ (not on the boundary).
Let $F$, $G$, $H$, and $I$ be the vertex centroids of quadrilaterals $EBCD$, $EACD$, $EABD$
and $EABC$, respectively (Figure~\ref{fig:QgqArbCen}).
Then $FGHI$ is homothetic to $ABCD$.
If $P$ is the center of the homothety and $M$ is the vertex centroid of $ABCD$,
then $P$ lies on $EM$ and $PE/PM=4$.
\end{theorem}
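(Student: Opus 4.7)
The plan is to use position vectors, since every point in the statement is defined as a centroid (affine average), so the whole theorem reduces to affine linear algebra without any synthetic cleverness.

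First I would set $S = A + B + C + D$, so that the vertex centroid of $ABCD$ is $M = S/4$. The definition of $F$ as the vertex centroid of $EBCD$ then gives $F = \tfrac14(E + B + C + D) = \tfrac14(E + S - A)$, and similarly for $G$, $H$, $I$. The key observation is that these four formulas can be rewritten uniformly as
\[ F = M + \tfrac14(E - A), \quad G = M + \tfrac14(E - B), \quad H = M + \tfrac14(E - C), \quad I = M + \tfrac14(E - D). \]
This exhibits a single affine map $\varphi : X \mapsto M + \tfrac14(E - X)$ sending $A, B, C, D$ respectively to $F, G, H, I$. Its linear part is $-\tfrac14 \cdot \mathrm{id}$, so $\varphi$ is a homothety of ratio $-1/4$; in particular $FGHI$ is homothetic to $ABCD$, which settles the first assertion.

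To pin down the center $P$, I would solve the fixed-point equation $\varphi(P) = P$, i.e.\ $\tfrac54 P = M + \tfrac14 E$, which gives the explicit formula $P = \tfrac{4M + E}{5}$. Subtracting then yields $P - E = \tfrac45(M - E)$ and $P - M = \tfrac15(E - M)$, so $P$ lies on line $EM$ (in fact between $E$ and $M$, at $4/5$ of the way from $E$), and the unsigned ratio $PE/PM$ equals $4$, as required.

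There is no real obstacle: the proof is essentially a one-line vector computation. The only mild care needed concerns the sign of the ratio: since the homothety coefficient is $-\tfrac14$, the quadrilateral $FGHI$ is a point-reflected, scaled copy of $ABCD$ about $P$ rather than a directly similar one, so the correspondence $A \leftrightarrow F$, $B \leftrightarrow G$, $C \leftrightarrow H$, $D \leftrightarrow I$ reverses orientation. The distance claim $PE/PM = 4$ is unsigned and is immediate from the formula $P = (4M+E)/5$.
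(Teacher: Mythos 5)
Your proof is correct, and it takes a different route from what the paper offers: the paper gives no proof of Theorem~\ref{thm:QgqArbCen} in the text at all --- it is listed among results ``found by computer,'' with verification relegated to the supplementary Mathematica notebooks and with the existence of a purely geometric proof posed as an open question immediately afterward. Your computation is sound at every step: $F=\tfrac14(E+S-A)=M+\tfrac14(E-A)$ and its analogues do exhibit the map $X\mapsto M+\tfrac14(E-X)$, whose linear part $-\tfrac14\,\mathrm{id}$ makes it a homothety of ratio $-\tfrac14$; the fixed point $P=\tfrac{4M+E}{5}$ is right, and $P-E=\tfrac45(M-E)$, $P-M=\tfrac15(E-M)$ give collinearity and $PE/PM=4$. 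Note that your argument actually proves more than the stated theorem (the ratio $-\tfrac14$ and the explicit location of $P$ at $4/5$ of the way from $E$ to $M$), and it parallels the strategy the authors hint at for the three-dimensional analogue Theorem~\ref{thm:ZgqArbCen}, whose omitted proof is said to mimic the median-ratio argument of Theorem~\ref{thm:genArbX2}; one could translate your vector identity into that synthetic language (a lemma that $FG\parallel AB$ with $FG=AB/4$), but the synthetic version would still need extra work to locate $P$, which your fixed-point equation delivers for free. The only caveat worth keeping is the one you already flag: the homothety has negative ratio, so $FGHI$ is a half-turned scaled copy of $ABCD$, and the ratio $4$ is to be read as an unsigned ratio of lengths (the signed ratio $\overrightarrow{PE}/\overrightarrow{PM}$ is $-4$); the degenerate case $E=M$, where $P=E=M$ and the ratio is undefined, is implicitly excluded by the statement.
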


\begin{figure}[h!t]
\centering
\includegraphics[width=0.51\linewidth]{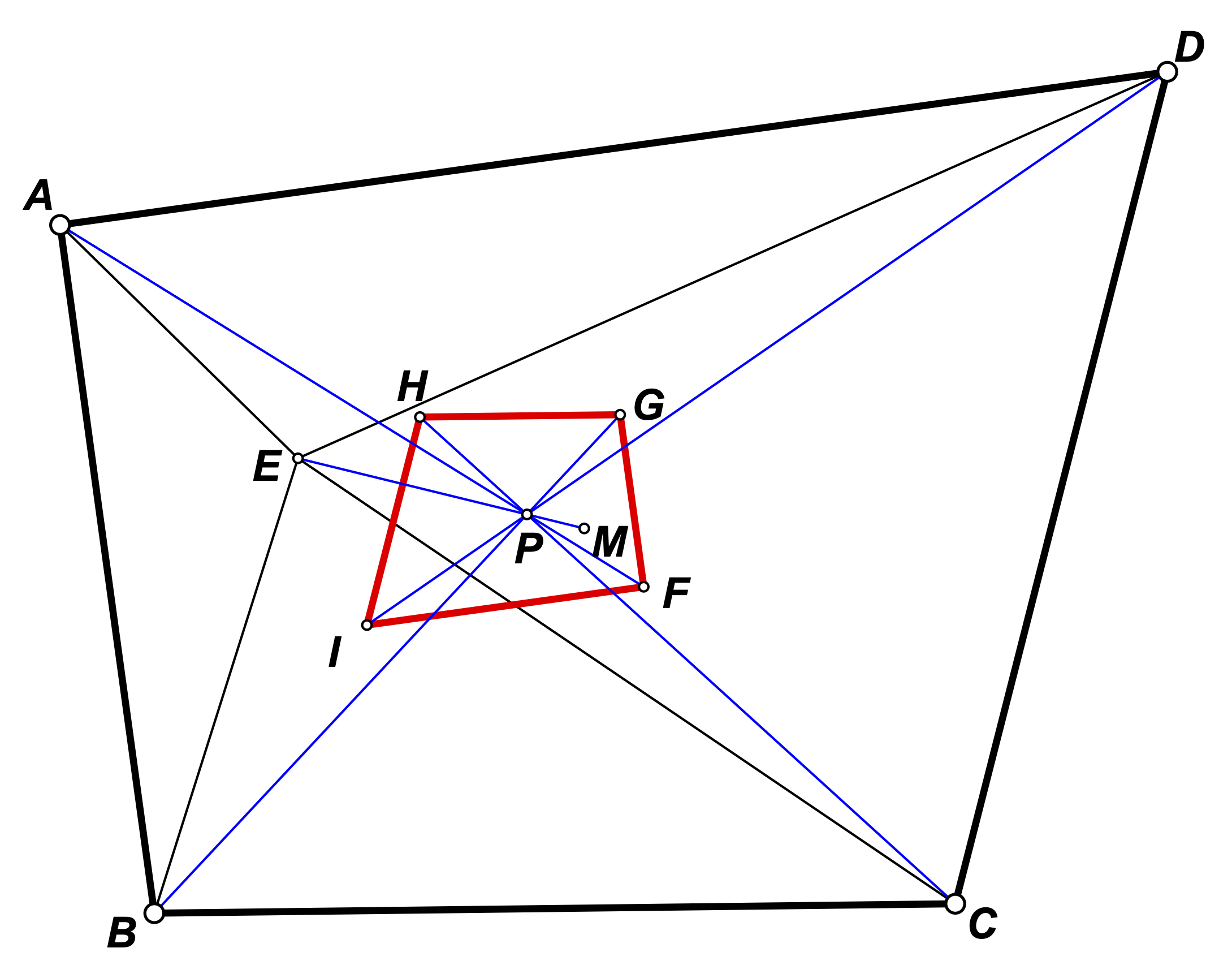}
\caption{centroids $\implies$ homothetic quadrilaterals}
\label{fig:QgqArbCen}
\end{figure}

\begin{theorem}
\label{thm:QcqArbStein}
Let $E$ be an arbitrary point in the plane of cyclic quadrilateral $ABCD$ (not on the boundary).
Let $F$, $G$, $H$, and $I$ be the Steiner points of quadrilaterals $EBCD$, $EACD$, $EABD$
and $EABC$, respectively (Figure~\ref{fig:QcqArbStein}).
Then $FGHI$ is cyclic.
If $P$ is the circumcenter of $FGHI$ and $O$ is the circumcenter of $ABCD$,
then $P$ is the midpoint of $EO$.
\end{theorem}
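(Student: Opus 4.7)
The plan is to give a uniform proof that rests on a single observation about the Steiner point: namely, $S$ is the Steiner point of a quadrilateral $WXYZ$ if and only if $2S-W$ lies on the circumcircle of $\triangle XYZ$ (and analogously with $X$, $Y$, $Z$ in place of $W$). This falls out of the very definition of the midray circle at $W$: that circle passes through the midpoints $(W+X)/2$, $(W+Y)/2$, $(W+Z)/2$, so it is exactly the image of the circumcircle of $\triangle XYZ$ under the homothety $h_{W,1/2}$ centered at $W$ with ratio $1/2$. Hence $S$ lies on the midray circle at $W$ iff its preimage $2S-W$ under $h_{W,1/2}$ lies on the circumcircle of $\triangle XYZ$. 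I would state this as a brief lemma before the main argument.

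Now apply this observation with $W=E$ in each of the four quadrilaterals $EBCD$, $EACD$, $EABD$, $EABC$. The crucial point is that the "opposite'' triangles in these four choices are $\triangle BCD$, $\triangle ACD$, $\triangle ABD$, $\triangle ABC$, and \emph{all four of them share the same circumcircle} $\Omega=(O,R)$ because $ABCD$ is cyclic. Thus, applying the lemma,
\[
2F-E,\ 2G-E,\ 2H-E,\ 2I-E\ \in\ \Omega.
\]

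Consequently $F,G,H,I$ all lie on $h_{E,1/2}(\Omega)$, which is the circle of radius $R/2$ centered at the midpoint of $OE$. This circle is therefore the circumcircle of $FGHI$, so $FGHI$ is cyclic with circumcenter $P=\tfrac{1}{2}(O+E)$, i.e.\ $P$ is the midpoint of $EO$, which is exactly the claim.

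There is essentially no hard step; the only item that requires a moment of care is justifying the lemma's characterization of the Steiner point (one should note that the midray circle at $W$ and the circumcircle of the opposite triangle are homothetic via $h_{W,1/2}$). A secondary point worth addressing briefly is non-degeneracy: one should remark that provided $E$ is not on the boundary (as assumed) and $F,G,H,I$ are distinct, the circle we exhibit is genuinely \emph{the} circumcircle of the quadrilateral $FGHI$. No coordinate computation or appeal to \texttt{Mathematica} is needed; the argument is purely synthetic.
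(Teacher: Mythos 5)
Your argument is correct, and it is worth noting that the paper itself offers no proof of this theorem: it is presented as a computer-discovered result, followed by an explicit open question asking whether a purely geometric proof exists. Your proposal answers that question for this statement. The key observation is sound: by the paper's own definition, the midray circle of $EBCD$ at the vertex $E$ passes through the midpoints of $EB$, $EC$, $ED$, so it is precisely the image of the circumcircle of $\triangle BCD$ under the homothety $h_{E,1/2}$; since the Steiner point $F$ lies on that midray circle by definition, $F$ lies on $h_{E,1/2}\bigl((BCD)\bigr)$. Because $ABCD$ is cyclic, the four ``opposite'' triangles $BCD$, $ACD$, $ABD$, $ABC$ all share the circumcircle $\Omega$ of $ABCD$, so $F$, $G$, $H$, $I$ all lie on the single circle $h_{E,1/2}(\Omega)$, whose center is the midpoint of $EO$ and whose radius is half the circumradius --- which even gives slightly more than the theorem states. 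Two small remarks: your lemma is phrased as an ``if and only if,'' but only the forward direction (Steiner point lies on the midray circle at $W$, hence $2S-W$ lies on the opposite circumcircle) is needed, and stating just that direction avoids having to argue that the four midray circles meet in a unique point; and the non-degeneracy caveat you raise is appropriately minor, since four concyclic points form a cyclic quadrilateral whenever they are distinct. You should consider communicating this argument, as it resolves one of the paper's stated open problems.
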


\begin{figure}[h!t]
\centering
\includegraphics[width=0.4\linewidth]{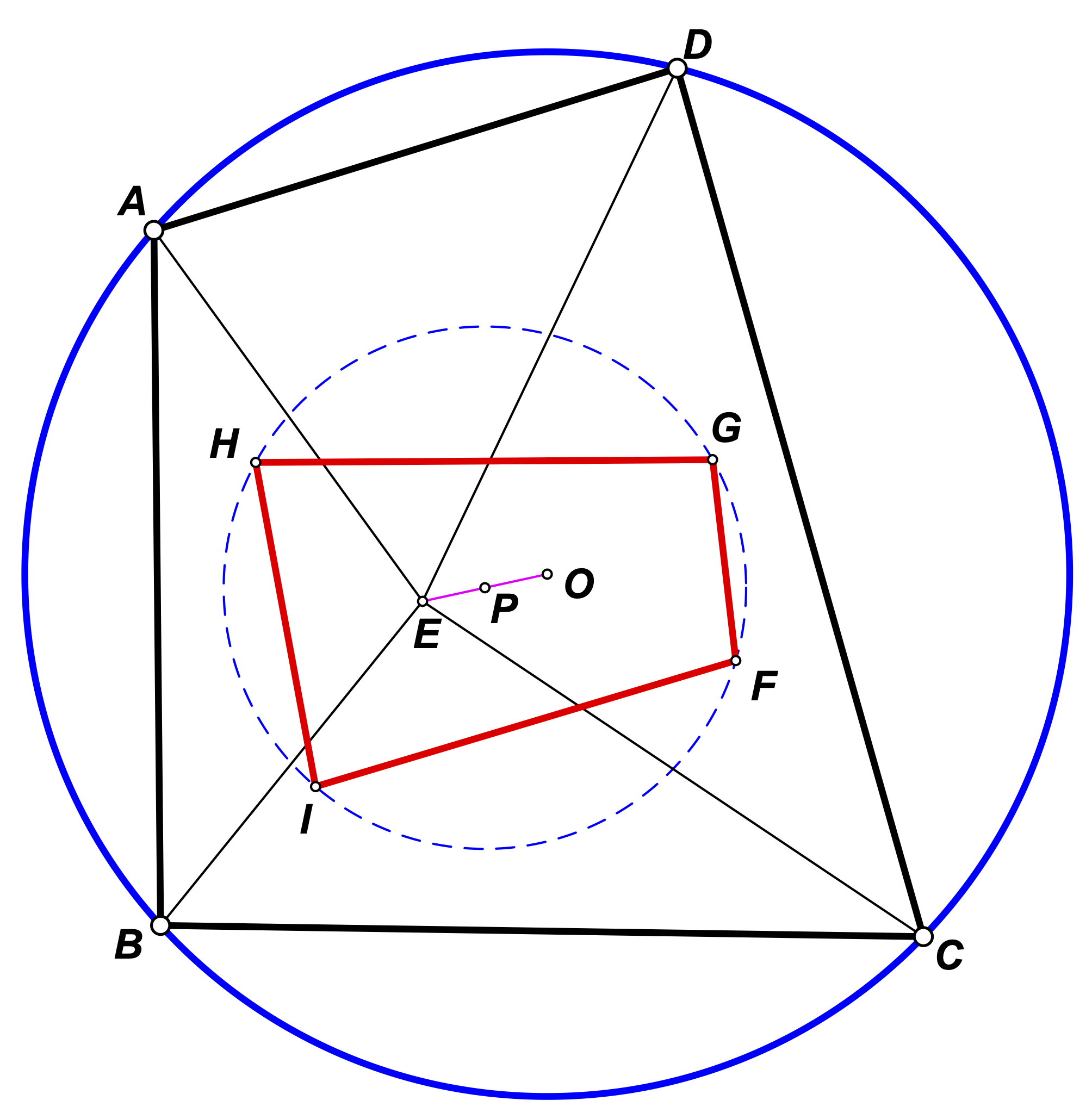}
\caption{cyclic, Steiner points $\implies$ cyclic}
\label{fig:QcqArbStein}
\end{figure}

\begin{theorem}
\label{thm:QcqArbPonce}
Let $E$ be an arbitrary point in the plane of cyclic quadrilateral $ABCD$ (not on the boundary).
Let $F$, $G$, $H$, and $I$ be the Poncelet points of quadrilaterals $EBCD$, $EACD$, $EABD$
and $EABC$, respectively (Figure~\ref{fig:QcqArbPonce}).
Then $FGHI$ is cyclic.
\end{theorem}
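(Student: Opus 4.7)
The plan is to reduce the statement to a single application of the classical four-circles theorem, applied to the four nine-point circles $N_1,\dots,N_4$ of the triangles $\triangle EAB$, $\triangle EBC$, $\triangle ECD$, $\triangle EDA$ formed by $E$ and consecutive vertices of $ABCD$. These are exactly the nine-point circles that are shared, pairwise, among the four Poncelet-defining quadruples.

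By the defining property of the Poncelet point (it lies on the nine-point circle of each of its quadrilateral's four component triangles), the Poncelet point $F$ of $EBCD$ lies on $N_2$ and on $N_3$; analogously $G\in N_3\cap N_4$, $H\in N_4\cap N_1$, and $I\in N_1\cap N_2$. Moreover, each nine-point circle contains the midpoints of the sides of its triangle, so writing $M_X$ for the midpoint of $EX$, we also have $M_B\in N_1\cap N_2$, $M_C\in N_2\cap N_3$, $M_D\in N_3\cap N_4$, and $M_A\in N_4\cap N_1$. Thus each consecutive pair $(N_i,N_{i+1})$ intersects in (generically) exactly two points, one a midpoint and one a Poncelet point, naturally partitioning the eight intersection points into two candidate concyclic quadruples $\{M_A,M_B,M_C,M_D\}$ and $\{H,I,F,G\}$.

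The midpoints $M_A,M_B,M_C,M_D$ are the images of $A,B,C,D$ under the homothety of ratio $1/2$ centered at $E$, so the cyclicity of $ABCD$ immediately yields the cyclicity of $M_AM_BM_CM_D$. The conclusion will then follow from the four-circles theorem: if four circles $\omega_1,\dots,\omega_4$ meet consecutively in pairs $\omega_i\cap\omega_{i+1}=\{P_i,Q_i\}$, then $\{P_1,P_2,P_3,P_4\}$ is concyclic if and only if $\{Q_1,Q_2,Q_3,Q_4\}$ is; applying this with the $P_i$ being the four midpoints and the $Q_i$ being $I,F,G,H$ gives the result. The main obstacle is invoking the four-circles theorem cleanly, since it is not as standard as Miquel's theorem itself; a short proof of it proceeds by an inversion centered at one of the midpoints (say $M_A$), which converts two of the nine-point circles into lines and reduces the remaining assertion to Miquel's pivot theorem on the resulting triangle. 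A degenerate-case check (when some midpoint and Poncelet point coincide, or when two of the $N_i$ coincide) can be handled by a continuity argument from the generic case.
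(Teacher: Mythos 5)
Your argument is correct, and it takes a genuinely different route from the paper: the paper states this theorem with no proof in the text, relegating its verification to the symbolic computations in the supplementary Mathematica notebooks, and it explicitly poses the existence of a purely geometric proof as an open question immediately after the statement. Your proof answers that question for this theorem. The identifications at its heart all check out: the component triangles of $EBCD$ include $\triangle EBC$ and $\triangle ECD$, so its Poncelet point $F$ lies on $N_2\cap N_3$, and likewise $G\in N_3\cap N_4$, $H\in N_4\cap N_1$, $I\in N_1\cap N_2$; each $N_i$ also passes through the midpoints of the two segments from $E$ to the relevant vertices, so the second intersection of each consecutive pair of nine-point circles is one of $M_A,M_B,M_C,M_D$; and these midpoints are concyclic because they are the image of $ABCD$ under the homothety with center $E$ and ratio $\tfrac12$. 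Miquel's four-circle theorem (for a cyclic chain $\omega_i\cap\omega_{i+1}=\{P_i,Q_i\}$, the $P_i$ are concyclic or collinear if and only if the $Q_i$ are) then transfers concyclicity from $\{M_A,M_B,M_C,M_D\}$ to $\{I,F,G,H\}$. That lemma has a four-line directed-angle proof (add the inscribed-angle relations in $\omega_2$ and $\omega_3$, subtract those in $\omega_1$ and $\omega_4$; the residue is exactly the concyclicity condition on the $P_i$), which I would recommend over the inversion detour since it also absorbs most of your degenerate cases without a separate continuity argument. What the paper's computational route buys is uniformity across its many results; what yours buys is an explanation: it shows the hypothesis that $ABCD$ is cyclic enters only through the concyclicity of the midpoints, and exhibits the circle $FGHI$ as the Miquel partner of the medial image of the circumcircle. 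If you write this up, make explicit that the four-circle theorem's conclusion is ``concyclic or collinear'' and that collinearity of $F,G,H,I$ does not occur for a convex cyclic $ABCD$, and note how you handle the cases $N_i=N_{i+1}$ or a Poncelet point coinciding with its paired midpoint.
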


\begin{figure}[h!t]
\centering
\includegraphics[width=0.4\linewidth]{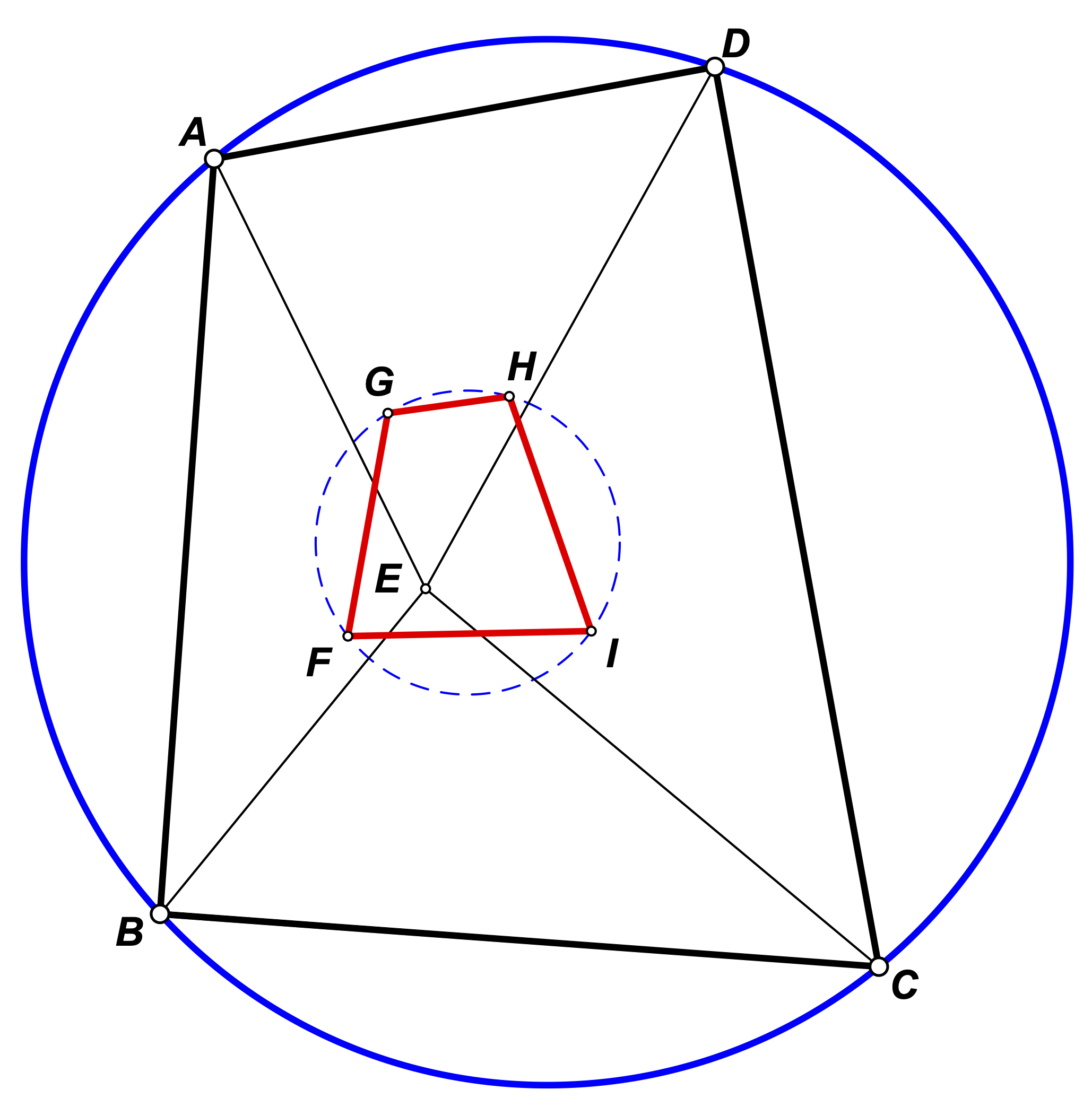}
\caption{cyclic, Poncelet points $\implies$ cyclic}
\label{fig:QcqArbPonce}
\end{figure}

\begin{open}
Are there purely geometric proofs of the previous 3 theorems?
\end{open}

\goodbreak
\subsection{Work in 3-space}\ \\

If point $D$ is moved off the plane of $\triangle ABC$, then the reference quadrilateral becomes a tetrahedron. Choose a point $E$ inside this reference tetrahedron and draw lines to each of the vertices
of the reference tetrahedron. This forms four tetrahedra with one vertex at $E$.
Locate tetrahedron centers (such as the centroid, circumcenter, or Monge point) in each
of these four tetrahedra. These centers form a new tetrahedron called the \emph{central tetrahedron}
of the given tetrahedron.
Investigate when the central tetrahedron has a special shape (such as being isodynamic, orthocentric,
or isosceles). A list of some special shape tetrahedra can be found in Section~3 of \cite{tetrahedron}.
The point $E$ can be an arbitrary point or it could be a notable point associated with
the reference tetrahedron. A list of notable points can be found in Section~4 of \cite{tetrahedron}.

The following result was discovered by computer.

\begin{theorem}
\label{thm:ZgqArbCen}
Let $E$ be any point not on the boundary of tetrahedron $ABCD$.
Let $F$, $G$, $H$, and $I$ be the centroids of tetrahedra $EBCD$, $EACD$, $EABD$
and $EABC$, respectively.
Then tetrahedron $FGHI$ is similar to tetrahedron $ABCD$.
\end{theorem}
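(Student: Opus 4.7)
The plan is to use position vectors. Fix an origin and identify each labelled point with its position vector. Since the centroid of a tetrahedron is the arithmetic mean of its four vertices, we have the closed-form expressions
$$F = \tfrac{1}{4}(E + B + C + D), \qquad G = \tfrac{1}{4}(E + A + C + D),$$
$$H = \tfrac{1}{4}(E + A + B + D), \qquad I = \tfrac{1}{4}(E + A + B + C).$$

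Next I would compute the six pairwise differences $F-G$, $F-H$, $F-I$, $G-H$, $G-I$, $H-I$. Each one collapses immediately; for example $F - G = (B - A)/4$, and the other five differences reduce in the same way to one quarter of $C-A$, $D-A$, $C-B$, $D-B$, $D-C$ respectively. This shows at once that every edge of $FGHI$ is parallel to, and one quarter the length of, the corresponding edge of $ABCD$, under the natural correspondence $F\leftrightarrow A$, $G\leftrightarrow B$, $H\leftrightarrow C$, $I\leftrightarrow D$ (note that $F$ is paired with $A$ because $F$ is the centroid of the tetrahedron obtained from $ABCD$ by replacing $A$ with $E$).

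To package this as an explicit similarity, I would set $M' = \tfrac{1}{5}(E + A + B + C + D)$ and verify directly that $F - M' = -\tfrac{1}{4}(A - M')$, with the analogous identities for the pairs $(G,B)$, $(H,C)$, $(I,D)$. This exhibits $FGHI$ as the image of $ABCD$ under a single homothety with centre $M'$ and ratio $-\tfrac{1}{4}$, that is, a point reflection in $M'$ composed with a uniform scaling by $1/4$. Since homotheties are similarity transformations, $FGHI$ is similar to $ABCD$.

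I do not expect any real obstacle here: the whole argument is a handful of one-line vector calculations, and the hypothesis that $E$ lie in the interior of $ABCD$ plays no role (the similarity holds for any $E$ whatsoever). The only subtle point, worth flagging for the reader, is that the similarity ratio is negative, so $FGHI$ is congruent to a $1/4$-scaled copy of $ABCD$ with reversed orientation, and the natural labelling sends the vertex $A$ (the one omitted in forming tetrahedron $EBCD$) to the centroid $F$ of that tetrahedron, rather than following any cyclic ordering of the labels.
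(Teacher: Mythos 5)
Your proposal is correct, and every computation in it checks out: with the centroid written as the average of the four vertices one gets $F-G=\tfrac14(B-A)$ and its five analogues, and the point $M'=\tfrac15(E+A+B+C+D)$ does satisfy $F-M'=-\tfrac14(A-M')$ together with the three analogous identities, so $FGHI$ is the image of $ABCD$ under a homothety of ratio $-\tfrac14$. The paper itself omits the proof, saying only that it parallels the synthetic proof of Theorem~\ref{thm:genArbX2}: there the argument runs through medians and Commandino's theorem (the centroid divides each median of a tetrahedron in the ratio $3:1$), so that $F$ and $G$ both lie on segments from $B$ and $A$ to the common centroid $N$ of the shared face $ECD$, giving $FG\parallel AB$ and $FG=\tfrac14 AB$ edge by edge. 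Your vector computation reaches the same conclusion more directly and buys a little extra: it exhibits the explicit centre $M'$ of the homothety and makes transparent why the hypothesis that $E$ avoid the boundary is irrelevant to the similarity, whereas the synthetic route stays closer in spirit to the planar Lemma~\ref{lemma:genArbX2Lemma} and avoids coordinates. Your closing remarks are also accurate: in three dimensions a homothety of negative ratio reverses orientation, and the correct vertex correspondence pairs each centroid with the vertex omitted from its defining tetrahedron rather than following the cyclic order of the labels.
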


The line from a vertex of a tetrahedron to the centroid of the opposite face is called a \emph{median}.
It is well known (Commandino's Theorem, \cite[p.~57]{Altshiller}) that the four medians of a tetrahedron concur at a point called the \emph{centroid} of that tetrahedron
and that the centroid divides each median in the ratio $1:3$.

The proof of Theorem~\ref{thm:ZgqArbCen} is similar to the proof of Theorem~\ref{thm:genArbX2}
and is omitted.



\goodbreak

\end{document}